\newtheorem{theorem}{Theorem}[section]
\newtheorem{lemma}[theorem]{Lemma}
\newtheorem{corollary}[theorem]{Corollary}
\newtheorem{proposition}[theorem]{Proposition}
\newtheorem{remark}[theorem]{Remark}
\newtheorem{definition}[theorem]{Definition}
\numberwithin{equation}{section}
\newcommand{\cz}{{\mathbb C}}
\newcommand{\gz}{{\mathbb Z}}
\newcommand{\nz}{{\mathbb N}}
\newcommand{\rz}{{\mathbb R}}
\newcommand{\scrC}{\mathscr{C}}
\newcommand{\scrD}{\mathscr{D}}
\newcommand{\scrF}{\mathscr{F}}
\newcommand{\scrL}{\mathscr{L}}
\newcommand{\scrR}{\mathscr{R}}
\newcommand{\scrS}{\mathscr{S}}
\newcommand{\ulu}{\underline{u}}
\newcommand{\ulf}{\underline{f}}
\newcommand{\ulU}{\underline{U}}
\newcommand{\ulF}{\underline{F}}
\newcommand{\cl}{\mathrm{cl}}
\newcommand{\const}{\mathrm{const}}
\newcommand{\dbar}{d\hspace*{-0.08em}\bar{}\hspace*{0.1em}}
\newcommand{\eps}{\varepsilon}
\newcommand{\forget}[1]{}
\newcommand{\lra}{\longrightarrow}
\newcommand{\op}{\mathrm{op}}
\newcommand{\spk}[1]{\langle#1\rangle}
\newcommand{\st}{\mbox{\boldmath$\;|\;$\unboldmath}}
\newcommand{\stBig}{\mbox{\boldmath$\;\Big|\;$\unboldmath}}
\newcommand{\tr}{\mathrm{tr}\,}
\newcommand{\wh}{\widehat}
\newcommand{\wt}{\widetilde}
\begin{document}
\title[Maximal $L_p$-regularity of non-local boundary value problems]%
{Maximal $L_p$-regularity of\\ non-local boundary value problems}

\author{R. Denk}
\address{Universit\"at Konstanz, Institut f\"ur Mathematik und Statistik, Konstanz (Germany)}
\email{robert.denk@uni-konstanz.de}

\author{J.\ Seiler}
\address{Universit\`a di Torino, Dipartimento di Matematica, Torino (Italy)}
\email{joerg.seiler@unito.it}

\maketitle

\begin{abstract}
We investigate the $\scrR$-boundedness of operator families belonging to the Boutet de Monvel calculus.
In particular, we show that weakly and strongly parameter-dependent Green operators of nonpositive order are
$\scrR$-bounded. Such operators appear as resolvents of non-local (pseudodifferential) boundary value problems.
As a consequence, we obtain maximal $L_p$-regularity for such boundary value problems.
An example is given by the reduced Stokes equation in waveguides.
\end{abstract}

\section{Introduction}\label{sec:intro}

During the last decade, the theory of maximal $L_p$-regularity turned out to be an important tool in the theory
of nonlinear partial differential equations and boundary value problems. Roughly speaking, maximal regularity in
the sense of well-posedness of the linearized problem is the basis for a fixed-point approach to show (local in time)
unique solvability for the nonlinear problem. Here, the setting of $L_p$-Sobolev spaces with $p\not=2$ is helpful
in treating the nonlinear terms, due to better Sobolev embedding results. Meanwhile, a large number of equations
from mathematical physics has been successfully treated by this method, in particular in fluid dynamics and for free
boundary problems. {Let us mention only Amann} \cite{amann05} for the general concept of
maximal regularity and {Escher, Pr\"uss, Simonett}\cite{escher-pruess-simonett03} for one of the first
applications in fluid mechanics.

A densely defined closed operator $A:\scrD(A)\subset X\to X$ in a Banach space $X$ is said to have maximal
$L_p$-regularity, $1<p<\infty$, in the interval $I=(0,T)$ with $0<T\le\infty$ if   the Cauchy problem
 $${u^\prime(t)+Au(t)}=f(t)\quad (t\in I),\qquad  u(0)=0,$$
has, for any right-hand side $f\in L_p(I,X)$, a unique solution $u$ satisfying
 $$  \|u^\prime\|_{L_p(I,X)}+\|Au\|_{L_p(I,X)}\le C\|f\|_{L_p(I,X)} $$
with a constant $C$ independent of $f$. Here, $W_p^1(I,X)$  refers to the standard $X$-valued first-order
Sobolev space. If $I$ is finite or $A$ is invertible an equivalent formulation is that the map
\begin{equation*}
{ \frac{d}{dt}+A}:  \leftidx{_0}{W}{^1_p}(I,X)\cap L_p(I,\scrD(A)) \lra L_p(I,X)
\end{equation*}
is an isomorphism, where $\leftidx{_0}{W}{^1_p}(I,X)$ denotes the space of all elements in $W_p^1(I,X)$
with vanishing time trace at $t=0$. Note that non-zero initial values can be treated by an application of
related trace theorems. A standard approach to prove maximal regularity is based on operator-valued Mikhlin
type results due to Weis \cite{weis01} and the concept of $\scrR$-boundedness
(see {Denk, Hieber, Pr\"uss} \cite{denk-hieber-pruess03}, Kunstmann, Weis \cite{kunstmann-weis04}).
For a short introduction to  $\scrR$-boundedness, see Section \ref{sec:rbounded} of this paper.

In many applications, the operator $A$ is given as the $L_p$-realization of a differential boundary value problem.
Under appropriate ellipticity and smoothness assumptions, maximal regularity is known to hold in this case
(see, for example, {Denk, Hieber, Pr\"uss} \cite{denk-hieber-pruess03}).
However, several applications demand for generalizations to non-local
(pseudodifferential) operators and boundary value problems. For instance, the Dirichlet-to-Neumann map in a
bounded domain leads to a pseudodifferential operator on the boundary, i.e. on a closed manifold. An example
for a non-local boundary value problem is obtained by the pseudodifferential approach to the Stokes equation
as developed by Grubb and Solonnikov \cite{grubb-solonnikov91} (see also {Grubb} \cite{grubb95} and
{Grubb, Kokholm} \cite{grubb-kokholm93}), which was also one of our motivations.

In the present  {note} we analyze
the $\scrR$-bounded\-ness of operator families belonging to the so-called Boutet de Monvel calculus with
parameter. This is a pseudodifferential calculus containing, in particular, the resolvents to a {large} class of
non-local boundary value problems  {which} allows to describe in great detail the micro-local fine
structure of such resolvents. {A typical} application of the calculus is the following theorem
$($which, in fact, is a simplified version of Theorem 3.2.7 of Grubb \cite{grubb86}$)$:

\begin{theorem}\label{thm:exemplary_thm}
Let $A(\mu)$, $\mu\in\Sigma$ $($an angular subsector of the complex plane$)$, be a parameter-dependent
second order differential operator on a compact manifold $M$ with smooth boundary, and {let} $G(\mu)$ be a
weakly parameter-dependent Green operator of order and type less than or equal to $2$ and regularity at
least $1/2$. Let  $\gamma_0$ and $\gamma_1$ denote Dirichlet and Neumann boundary conditions,
respectively. If the parameter-dependent boundary value problem
\begin{align*}
   \begin{pmatrix}A(\mu)+G(\mu) \\ \gamma_j\end{pmatrix}:\;
    H^{s}_p(M)\lra
   \begin{matrix} H^{s-2}_p(M)\\ \oplus \\ B^{s-j-1/p}_{pp}(\partial M)\end{matrix},\qquad s>1+1/p,
\end{align*}
with $p\in(1,\infty)$ is parameter-elliptic then it is an isomorphism for $|\mu|$ sufficiently large, and
\begin{align}\label{eq:examplary_thm}
   \begin{pmatrix}A(\mu)+G(\mu) \\ \gamma_j\end{pmatrix}^{-1}=
   \begin{pmatrix}P(\mu) & K(\mu)\end{pmatrix},
\end{align}
with $P(\mu)\in B^{-2,0,\nu}(M;\Sigma)$ and a parameter-dependent Poisson operator $K(\mu)$ of order $-j$.
\end{theorem}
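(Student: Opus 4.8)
The plan is to run the classical parametrix-plus-Neumann-series argument entirely within the parameter-dependent Boutet de Monvel calculus. First I would make the hypothesis explicit: parameter-ellipticity of $\binom{A(\mu)+G(\mu)}{\gamma_j}$ says that the homogeneous principal interior symbol $a_2(x,\xi,\mu)$ is invertible for $(\xi,\mu)\neq 0$ and that the associated boundary symbol operator
\begin{equation*}
  \begin{pmatrix} a_2(x',0,\xi',D_n,\mu)+g(x',\xi',D_n,\mu)\\ \gamma_j\end{pmatrix}\colon\ \scrS(\rz_+)\lra \scrS(\rz_+)\oplus\cz
\end{equation*}
is bijective whenever $(\xi',\mu)\neq 0$ (the Shapiro--Lopatinskii condition with parameter). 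Because the calculus with parameter is an algebra --- compositions of parameter-dependent pseudodifferential operators with the transmission property, and of Poisson, trace and singular Green operators, remain in the calculus, with orders, types and regularity numbers combining in the usual way --- invertibility of the principal symbol lets one build, symbol by symbol, a two-sided parametrix $\bigl(Q(\mu)\ \ L(\mu)\bigr)$, with $Q(\mu)$ of order $-2$ and type $0$ and $L(\mu)$ a Poisson operator of order $-j$, such that
\begin{equation*}
  \begin{pmatrix}Q(\mu) & L(\mu)\end{pmatrix}\begin{pmatrix}A(\mu)+G(\mu)\\ \gamma_j\end{pmatrix}=I-R(\mu),
\end{equation*}
where $R(\mu)$ is a Green operator of strictly negative order. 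Here a single parametrix correction already produces order $\leq -1$, and the hypothesis ``regularity at least $1/2$'' is exactly what guarantees that $R(\mu)$ still has strictly positive regularity after this step.

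The decisive analytic input is that a parameter-dependent Green or smoothing operator of strictly negative order and strictly positive regularity has operator norm tending to $0$ as $|\mu|\to\infty$ on the relevant Sobolev/Besov scale; this is precisely why one must improve the parametrix beyond its leading term. Consequently $\|R(\mu)\|<1$ for $|\mu|\geq\mu_0$, so $I-R(\mu)$ is invertible, and --- crucially --- the Neumann series converges \emph{inside} the calculus, giving $(I-R(\mu))^{-1}=I+S(\mu)$ with $S(\mu)$ again a parameter-dependent Green operator. Performing the same construction from the other side yields a right parametrix with small remainder, hence a genuine right inverse; left and right inverse agree, so $\binom{A(\mu)+G(\mu)}{\gamma_j}$ is an isomorphism for $|\mu|\geq\mu_0$, with inverse
\begin{equation*}
  \begin{pmatrix}P(\mu) & K(\mu)\end{pmatrix}=(I+S(\mu))\begin{pmatrix}Q(\mu) & L(\mu)\end{pmatrix}.
\end{equation*}

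It then remains to read off the structure \eqref{eq:examplary_thm} from the calculus bookkeeping. The first block $P(\mu)=(I+S(\mu))Q(\mu)$ is a composition of parameter-dependent pseudodifferential operators with the transmission property and singular Green operators; tracking orders and types shows it has order $-2$ and type $0$, and it inherits the regularity of $G(\mu)$, so $P(\mu)\in B^{-2,0,\nu}(M;\Sigma)$. The second block $K(\mu)=(I+S(\mu))L(\mu)$ lies in the Poisson class, and since $\gamma_j$ is a trace operator of order $j$, the identity $P(\mu)(A(\mu)+G(\mu))+K(\mu)\gamma_j=I$ forces $K(\mu)$ to be of order $-j$. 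I expect the main obstacle to be the parametrix/remainder analysis: verifying that the correction step leaves a remainder which is simultaneously of strictly negative order and of strictly positive regularity --- this is exactly where the hypothesis $\nu\geq 1/2$ is indispensable and where the full apparatus of parameter-dependent symbol estimates and regularity numbers is required. The remaining assertion, that the Neumann-series correction $S(\mu)$ stays inside the calculus, is a second, more bookkeeping-type, technical point.
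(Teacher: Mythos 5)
First, a remark on the ground truth: the paper does not prove this statement at all --- Theorem \ref{thm:exemplary_thm} is explicitly presented as ``a simplified version of Theorem 3.2.7 of Grubb \cite{grubb86}'', so the only proof on record is Grubb's. Your outline (interior parameter-ellipticity plus bijectivity of the boundary symbol operator, a parametrix built symbol by symbol inside the parameter-dependent calculus, smallness of the remainder for large $|\mu|$, identification of the blocks of the inverse) is indeed the strategy of that source, so the overall approach is the right one. One correction of substance: for a \emph{weakly} parameter-dependent Green operator of finite regularity the principal boundary symbol is defined only for $\xi'\neq 0$, not for $(\xi',\mu)\neq(0,0)$ (compare the formulation of parameter-ellipticity in Theorem \ref{thm:grubb} and the discussion at the end of Section \ref{sec:5.1}); the Shapiro--Lopatinskii condition must be imposed for $\xi'\neq 0$ only, and the behaviour near $\xi'=0$, $\mu\neq 0$ is governed by the regularity number. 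This is not cosmetic --- it is precisely what distinguishes the finite-regularity calculus from the strongly parameter-dependent one.

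The genuine gap is the sentence asserting that ``the Neumann series converges \emph{inside} the calculus, giving $(I-R(\mu))^{-1}=I+S(\mu)$ with $S(\mu)$ again a parameter-dependent Green operator.'' The symbol classes are Fr\'echet spaces, and operator-norm convergence of $\sum R(\mu)^k$ gives no convergence in the symbol topology; the usual repair, $(I-R)^{-1}=I+R+R(I-R)^{-1}R$, only places the correction back in the calculus if $R$ has arbitrarily negative order, so that sandwiching the merely bounded factor $(I-R)^{-1}$ is harmless. But in the finite-regularity setting you cannot push the order of the remainder down for free: improving the parametrix by $N$ orders costs $N$ units of regularity (this is built into the definition of the polyhomogeneous classes in Section \ref{sec:5.1}), so with $\nu\ge 1/2$ already the first correction produces a remainder of regularity $\le -1/2$, contradicting your claim that it retains ``strictly positive regularity.'' Both the norm decay of the remainder as $|\mu|\to\infty$ and the fact that the exact inverse nevertheless lies in $B^{-2,0,\nu}(M;\Sigma)$ with the \emph{original} regularity $\nu$ therefore require the quantitative interplay between the order $d$ and the factor $\varrho^{\nu-\cdots}+1$ in the kernel estimates \eqref{eq:kernel5}; they do not follow from the qualitative slogan ``negative order and positive regularity imply decay'' combined with a Neumann series. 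This is the technical heart of Grubb's Theorem 3.2.7 and is missing from your sketch.
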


The involved operator classes as well as the meaning of parameter-ellipticity will be explained in the sequel;
the mentioned Green operators are certain non-local operators that are smoothing in the interior of $M$, but on
the whole manifold with boundary have a finite order. As a consequence of \eqref{eq:examplary_thm},
 $$A(\mu)+G(\mu):\big\{u\in H^2_p(M)\mid \gamma_j u=0\big\}\subset L_p(M)\lra L_p(M)$$
is invertible for large $\mu$ with inverse $P(\mu)\in B^{-2,0,\nu}(M;\Sigma)$.
Making use of this specific pseudodifferential structure we shall derive, in particular, that
$\{(1+|\mu|)^2P(\mu)\mid \mu\in \Sigma\}\subset\scrL(L_p(M))$ is $\scrR$-bounded, cf. Theorem \ref{thm:main1}.
For the proof we also adopt a tensor-product argument first used in {Denk, Krainer} \cite{denk-krainer07}
in the analysis of the $\scrR$-boundedness of parameter-dependent families of ``scattering" or
``$SG$-pseudodifferential" operators $($which, roughly speaking, allows to reduce considerations to constant
coefficient operators$)$ and use  general results of Kalton, {Kunstmann, Weis} \cite{kalton-kunstmann-weis06}
on the behaviour of $\scrR$-boundedness under interpolation and duality.

There are different versions of Boutet de Monvel's calculus, one with a \emph{strong}, the other with a
\emph{weak} parameter-dependence. The first calculus  is essentially designed to handle fully differential problems,
and is described, for example, in Schrohe, Schulze \cite{schrohe-schulze94}. The second is a broader calculus developed
by Grubb allowing the investigation of certain non-local problems, see Grubb \cite{grubb86} and
Grubb, Kokholm \cite{grubb-kokholm93} for instance. Actually,
we shall blend these two versions and consider operator families depending on two parameters, where one enters in
the strong way and the other only weakly; for details see Section \ref{sec:2}. Though this combination cannot be
found explicitly in the literature, we shall use it freely and avoid giving any proofs, since these are quite standard
(though laborious if done with all necessary details$)$. Our main result is Theorem \ref{thm:main1} stating that such
operator families are $\scrR$-bounded as operator families in the $L_p$-space of the bounded manifold. An
application is provided in Section \ref{sec:stokes} where we consider a resolvent problem for the Stokes operator
in a wave guide (i.e. cylindrical domain) with compact, smoothly bounded cross-section.

Boutet de Monvel's calculus can also be exploited to demonstrate existence of bounded imaginary powers and
even of a bounded $H_\infty$-calculus, cf. Duong \cite{duong90}, Abels \cite{abels05a} and
Coriasco, Schrohe, Seiler \cite{coriasco-schrohe-seiler07} for example; as it turns out, the strategy of proof
we use in the present work is closely related to that of {Abels} \cite{abels05a}.
{On  one hand}, bounded $H_\infty$-calculus is stronger than $\scrR$-boundedness, on the other hand the
concept of $\scrR$-boundedness applies to operator-families more general than the resolvent of a fixed operator.

\section{A short rewiew of $\scrR$-boundedness}\label{sec:rbounded}

We will briefly recall the definition of $\scrR$-boundedness and some results that will be important for our purpose.
For more detailed expositions we refer the reader to {Denk, Hieber, Pr\"uss} \cite{denk-hieber-pruess03} and
{Kunstmann, Weis} \cite{kunstmann-weis04}.
Throughout this section,  {let} $X$, $Y$, $Z$ denote Banach spaces.

A set $T\subset\scrL(X,Y)$ is called $\scrR$-bounded if there exists a $q\in[1,\infty)$ such that
 $$\scrR_q(T):=\sup\Big\{\Big(\!\sum_{z_1,\ldots,z_N=\pm1}\Big\|\sum_{j=1}^N z_jA_jx_j\Big\|^q\Big)^{1/q}
     \Big(\!\sum_{z_1,\ldots,z_N=\pm1}\Big\|\sum_{j=1}^N z_jx_j\Big\|^q\Big)^{-1/q}\Big\}$$
is finite, where the supremum is taken over all $N\in\nz$, $A_j\in T$ and $x_j\in X$ (for which the denominator is
different from zero, of course). The number $\scrR_q(T)$ is called the $\scrR$-bound of $T$. It is a consequence
of Kahane's inequality that finiteness of $\scrR_q(T)$ for a particular $q$ implies finiteness for any
other choice of $q\ge1$. Therefore $q$ is often suppressed from the notation. Clearly an $\scrR$-bounded
set is norm bounded and its norm-bound is majorized by its $\scrR$-bound. In case both $X$ and
$Y$ are {Hilbert spaces}, $\scrR$-boundedness is equivalent to norm-boundedness.

If $S,T\subset\scrL(X,Y)$ and $R\subset\scrL(Y,Z)$ are $\scrR$-bounded then $S+T$ and $RS$ are
$\scrR$-bounded, too,  with
 $$\scrR(S+T)\le\scrR(S)+\scrR(T),\qquad \scrR(RS)\le\scrR(R)\scrR(S).$$

Under mild assumptions on the involved Banach spaces $\scrR$-boundedness {behaves well}  under duality
and interpolation. {The following two results can be found in Kalton-Kunstmann-Weis \cite{kalton-kunstmann-weis06},
Proposition~3.5 and Proposition~3.7, respectively.}

\begin{theorem}\label{thm:dual}
Let $T$ be an $\scrR$-bounded subset of $\scrL(X,Y)$ and assume that $X$ is
$B$-convex\footnote{For a definition of $B$-convexity we refer the reader to \cite{kalton-kunstmann-weis06}.
For us it will be sufficient to know that $L_p$-spaces with $1<p<\infty$ are $B$-convex.}.
Then
 $$T^\prime:=\big\{A^\prime\st A\in T\big\}\qquad (\text{set of dual operators})$$
is an $\scrR$-bounded subset of $\scrL(Y^\prime,X^\prime)$ with $\scrR(T^\prime)\le C \scrR(T^\prime)$ with
a constant $C\ge 0$ not depending on $T$.
\end{theorem}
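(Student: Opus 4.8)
The plan is to reduce the dual estimate to the primal one by inserting a standard duality pairing between the Rademacher averages in $Y^\prime$ and the Rademacher averages in $Y$, and then to use $B$-convexity to pass back from $Y$-averages to $X^\prime$-averages via the $Y$-side estimate. First I would fix $N\in\nz$, operators $A_1,\dots,A_N\in T$, and elements $\xi_1,\dots,\xi_N\in Y^\prime$, and I would work with the $q=2$ version of the $\scrR$-bound, which is legitimate by Kahane's inequality; write $\eps_j$ for the Rademacher functions on $[0,1]$, so that $\bigl\|\sum_j \eps_j(\cdot)\,\eta_j\bigr\|_{L_2([0,1],X)}$ replaces the finite signed sum. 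To estimate $\bigl\|\sum_j\eps_j A_j^\prime\xi_j\bigr\|_{L_2([0,1],X^\prime)}$ I would test against an arbitrary $g\in L_2([0,1],X)$ of norm one: the pairing equals $\int_0^1\sum_{j,k}\eps_j(t)\eps_k(t)\,\langle A_j^\prime\xi_j,x_k(t)\rangle\,dt$ where $g(t)=\sum_k\eps_k(t)x_k(t)$ — more cleanly, using orthonormality of the $\eps_j$ one reduces the double sum to the diagonal and obtains $\bigl|\langle\sum_j\eps_j A_j^\prime\xi_j,\,g\rangle\bigr| = \bigl|\int_0^1\langle\sum_j\eps_j(t)\xi_j,\;\sum_k\eps_k(t)A_k x_k(t)\rangle\,dt\bigr|$, which is bounded by $\bigl\|\sum_j\eps_j\xi_j\bigr\|_{L_2([0,1],Y^\prime)}\cdot\bigl\|\sum_k\eps_k A_k x_k\bigr\|_{L_2([0,1],Y)}$.

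The first factor is exactly the denominator of the dual $\scrR$-bound; the second factor is, by the hypothesis that $T$ is $\scrR$-bounded in $\scrL(X,Y)$, at most $\scrR(T)\,\bigl\|\sum_k\eps_k x_k\bigr\|_{L_2([0,1],X)}$. So the whole problem collapses to showing
\begin{equation*}
   \Bigl\|\sum_{k=1}^N\eps_k x_k\Bigr\|_{L_2([0,1],X)}\;\le\;C\,\|g\|_{L_2([0,1],X)}\;=\;C
\end{equation*}
for the $x_k$ arising from a norm-one $g\in L_2([0,1],X)$. This is precisely the statement that the orthogonal (Rademacher) projection onto the first Rademacher chaos is bounded on $L_2([0,1],X)$, which — and this is the only place the geometry of $X$ enters — holds exactly when $X$ has finite cotype, in particular when $X$ is $B$-convex (equivalently, has non-trivial type, hence finite cotype). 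Taking the supremum over norm-one $g$ recovers $\bigl\|\sum_j\eps_j A_j^\prime\xi_j\bigr\|_{L_2([0,1],X^\prime)}\le C\,\scrR(T)\,\bigl\|\sum_j\eps_j\xi_j\bigr\|_{L_2([0,1],Y^\prime)}$, and dividing and taking the supremum over $N$, the $A_j$, and the $\xi_j$ gives $\scrR(T^\prime)\le C\,\scrR(T)$ with $C$ the norm of the first-chaos projection on $L_2([0,1],X)$, which depends only on $X$ and not on $T$.

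The main obstacle, and the reason the $B$-convexity hypothesis cannot be dropped, is the boundedness of the first-chaos Rademacher projection on $L_2([0,1],X)$: without a cotype assumption the map $g\mapsto\sum_k\eps_k\int_0^1 g\,\eps_k$ need not be bounded in the $X$-valued setting, and the argument breaks. (This is the content of the cited Kalton–Kunstmann–Weis result, and the place where one invokes that $L_p$-spaces for $1<p<\infty$ are $B$-convex, so that the theorem applies in all situations needed later in the paper.) Everything else — the passage between signed sums and Rademacher integrals, the diagonalisation of the double sum, the composition estimate $\scrR(RS)\le\scrR(R)\scrR(S)$ applied implicitly — is routine. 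Note also that the displayed conclusion in the statement contains an evident typo ($\scrR(T^\prime)\le C\scrR(T^\prime)$ should read $\scrR(T^\prime)\le C\scrR(T)$), which the argument above produces in the corrected form.
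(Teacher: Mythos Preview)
The paper does not prove this theorem; it merely quotes it as Proposition~3.5 of Kalton--Kunstmann--Weis, so there is no ``paper's proof'' to compare against. Your argument is the standard one and is essentially correct: dualise by pairing the Rademacher sum in $X^\prime$ against a generic $g\in L_2([0,1],X)$, pick off the Rademacher coefficients $x_k=\int_0^1\eps_k(s)g(s)\,ds$, rewrite the resulting diagonal sum as a $Y^\prime$--$Y$ pairing, apply the primal $\scrR$-bound, and close with the boundedness of the Rademacher projection on $L_2([0,1],X)$. Two small points deserve correction.

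First, a notational slip: you write $g(t)=\sum_k\eps_k(t)x_k(t)$, which suggests the $x_k$ depend on $t$. They do not; the $x_k\in X$ are the fixed Rademacher coefficients of $g$, and what you actually use (and later write correctly) is the projection $g\mapsto\sum_k\eps_k\int_0^1\eps_k g$.

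Second, and more substantively, your characterisation of when the Rademacher projection is bounded is wrong. You claim this ``holds exactly when $X$ has finite cotype''. It does not: $L_1$ has cotype~$2$ but the Rademacher projection is unbounded on $L_2([0,1],L_1)$. The correct condition is $K$-convexity of $X$, and the non-trivial input is Pisier's theorem identifying $K$-convexity with $B$-convexity (equivalently, non-trivial type). Since the hypothesis of the theorem is $B$-convexity, the argument goes through, but the reason you give is not the right one. Once this is fixed, your proof is complete and yields $\scrR(T^\prime)\le C\,\scrR(T)$ with $C$ the $K$-convexity constant of $X$ --- and, as you note, the displayed inequality in the statement has a typo.
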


\begin{theorem}\label{thm:inter}
Let $(X_0,X_1)$ and $(Y_0,Y_1)$ be two interpolation couples with both $X_0$ and $X_1$ being $B$-convex.
Let $T\subset\scrL(X_0+X_1,Y_0+Y_1)$ such that
$T\subset\scrL(X_j,Y_j)$ is $\scrR$-bounded with $\scrR$-bound $\kappa_j$ for $j=0,1$. Then
 $$T\subset\scrL\big((X_0,X_1)_{\theta,p},(Y_0,Y_1)_{\theta,p}\big),\qquad 0<\theta<1,\quad 1<p<\infty,$$
is $\scrR$-bounded with $\scrR$-bound $\kappa\le \kappa_0^{1-\theta} \kappa_1^{\theta}$, where
$(\cdot,\cdot)_{\theta,p}$ refers to the real interpolation method.
\end{theorem}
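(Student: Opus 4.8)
The plan is to prove Theorem~\ref{thm:inter} by combining the real interpolation functor with the characterisation of $\scrR$-boundedness in terms of boundedness of an averaged operator acting on Rademacher-randomised sequences. First I would recall that, fixing $q\in[1,\infty)$ (by Kahane the choice is immaterial), a family $T\subset\scrL(V,W)$ is $\scrR$-bounded with bound $\kappa$ if and only if, for every finite $N$, the operator
\begin{equation*}
  \mathrm{Rad}_N(T)\colon \mathrm{Rad}_N(V)\lra \mathrm{Rad}_N(W),\qquad
  \sum_{j=1}^N r_j\otimes x_j\longmapsto \sum_{j=1}^N r_j\otimes A_jx_j,
\end{equation*}
is bounded with norm $\le\kappa$ uniformly in $N$ and in the choice of $A_1,\dots,A_N\in T$, where $\mathrm{Rad}_N(V)$ is the closed span of $\{r_j\otimes v_j\}$ inside $L_q([0,1],V)$ and $r_j$ are the Rademacher functions. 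Here one must be slightly careful: strictly speaking $\mathrm{Rad}_N(T)$ depends on the chosen tuple $(A_1,\dots,A_N)$, so the statement is that the supremum over all such tuples and all $N$ of the operator norms equals $\scrR_q(T)$.

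The key step is then an interpolation identity for these Rademacher spaces. For a fixed tuple $\mathbf A=(A_1,\dots,A_N)$ with each $A_j\in T$, the operator $\mathrm{Rad}_N(\mathbf A)$ is bounded $\mathrm{Rad}_N(X_j)\to\mathrm{Rad}_N(Y_j)$ with norm $\le\kappa_j$ for $j=0,1$ by hypothesis. Since $\mathrm{Rad}_N(X_i)$ is a complemented subspace of $L_p([0,1],X_i)$ — the Rademacher projection $P_N$ onto the span of the first $N$ Rademacher functions is bounded on $L_p([0,1],X_i)$ precisely because $X_i$ is $B$-convex, equivalently has no nontrivial type/cotype obstruction, and this is the place where $B$-convexity of \emph{both} $X_0$ and $X_1$ is used — the real interpolation space $(\mathrm{Rad}_N(X_0),\mathrm{Rad}_N(X_1))_{\theta,p}$ is a complemented subspace of $(L_p([0,1],X_0),L_p([0,1],X_1))_{\theta,p}=L_p([0,1],(X_0,X_1)_{\theta,p})$, and in fact one identifies it with $\mathrm{Rad}_N\big((X_0,X_1)_{\theta,p}\big)$. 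For the target spaces $Y_0,Y_1$ no $B$-convexity is assumed, but here one only needs the elementary inclusion: the span of $\{r_j\otimes w_j\}$ with $w_j\in Y_0+Y_1$ sits inside $L_p([0,1],Y_0+Y_1)$, and $(L_p([0,1],Y_0),L_p([0,1],Y_1))_{\theta,p}=L_p([0,1],(Y_0,Y_1)_{\theta,p})$ contains $\mathrm{Rad}_N((Y_0,Y_1)_{\theta,p})$ isometrically, so no projection onto the Rademacher subspace is needed on the $Y$-side.

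With these identifications in hand, the conclusion follows from the standard interpolation property of bounded linear operators: $\mathrm{Rad}_N(\mathbf A)$, viewed as a bounded operator $L_p([0,1],X_0)\to L_p([0,1],Y_0)$ of norm $\le\kappa_0$ and simultaneously $L_p([0,1],X_1)\to L_p([0,1],Y_1)$ of norm $\le\kappa_1$ (extending it by the Rademacher projection on the $X$-side so that it really acts on the full $L_p$ spaces), interpolates to a bounded operator between the $(\cdot,\cdot)_{\theta,p}$-spaces of norm $\le\kappa_0^{1-\theta}\kappa_1^\theta$, by the exactness of the real method with exponents matching. Restricting back to the Rademacher subspaces gives $\|\mathrm{Rad}_N(\mathbf A)\|_{\mathrm{Rad}_N(X_\theta)\to\mathrm{Rad}_N(Y_\theta)}\le \kappa_0^{1-\theta}\kappa_1^\theta$, where $X_\theta=(X_0,X_1)_{\theta,p}$, $Y_\theta=(Y_0,Y_1)_{\theta,p}$. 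Taking the supremum over $N$ and over all tuples $\mathbf A$ yields $\scrR(T)\le\kappa_0^{1-\theta}\kappa_1^\theta$ on $\scrL(X_\theta,Y_\theta)$.

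The main obstacle I expect is the careful bookkeeping around the Rademacher projections and the compatibility of the interpolation couples: one must check that the Rademacher projection $P_N$ is simultaneously bounded on $L_p([0,1],X_0)$ and on $L_p([0,1],X_1)$ (hence on the interpolation space), that the operator $\mathrm{Rad}_N(\mathbf A)$ extends consistently to the two endpoint $L_p$-spaces so that a single operator is being interpolated, and that the real interpolation of vector-valued $L_p$-spaces is indeed $L_p$ of the interpolated target (which holds for the real method with the second index equal to $p$ in both slots). None of this is deep, but it is exactly where $B$-convexity enters and where a naive argument would break; the rest is the textbook interpolation estimate for operator norms. In fact, since this is quoted verbatim from Kalton--Kunstmann--Weis \cite{kalton-kunstmann-weis06}, in the paper itself I would simply cite their Proposition~3.7 and omit the proof, but the sketch above is the route one would take to reconstruct it.
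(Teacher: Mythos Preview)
Your final remark is exactly right: the paper does not prove Theorem~\ref{thm:inter} at all but simply quotes it as Proposition~3.7 of Kalton--Kunstmann--Weis~\cite{kalton-kunstmann-weis06}. Your sketch via Rademacher spaces and interpolation of vector-valued $L_p$ is a faithful reconstruction of the argument behind that proposition, and you correctly locate the role of $B$-convexity in the boundedness of the Rademacher projection on the $X$-side.
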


\forget{
\begin{proposition}\label{prop:mult}
Let $\Omega\subset\rz^n$ be open and $1\le p<\infty$. For $\varphi\in L_\infty(\Omega)$ let
$M_\varphi\in\scrL\big(L_p(\Omega,X)\big)$ denote the operator of multiplication by $\varphi$.
If $\Phi\subset L_\infty(\Omega)$ is a bounded set then
 $$T_\Phi:=\{M_\varphi\st\varphi\in\Phi\}\subset \scrL\big(L_p(\Omega,X)\big)$$
is $\scrR$-bounded with
 $$\scrR_p(T_\Phi)\le 2\sup_{\varphi\in\Phi}\|\varphi\|_\infty.$$
\end{proposition}
}

The following {statement} (Proposition~3.3 in {Denk, Hieber, Pr\"uss} \cite{denk-hieber-pruess03}) is very useful in analyzing the
$\scrR$-boundedness of families of integral operators.

\begin{theorem}\label{thm:intop}
Let $\Omega\subset\rz^n$ be open, $1< p<\infty$, and assume that
 $$(K_0 f)(\omega)=\int_\Omega k_0(\omega,\omega^\prime)f(\omega)\,d\omega$$
defines an integral operator $K_0\in\scrL(L_p(\Omega))$.
Let $\big\{k_\lambda:\Omega\times\Omega\to\scrL(X,Y)\st\lambda\in\Lambda\big\}$ be a family of measurable
integral kernels and
$T=\big\{K_\lambda\st\lambda\in\Lambda\big\}$ be the set of associated integral operators. If
 $$\scrR_p\Big(\big\{k_\lambda(\omega,\omega^\prime)\st\lambda\in\Lambda\big\}\Big)\le
     k_0(\omega,\omega^\prime)\qquad\text{for all }\omega,\omega^\prime\in\Omega$$
then $T\subset \scrL\big(L_p(\Omega,X),L_p(\Omega,Y)\big)$ is $\scrR$-bounded with
 $$\scrR_p\Big(\big\{K_\lambda\st\lambda\in\Lambda\big\}\Big)\le \|K_0\|_{\scrL(L_p(\Omega))}.$$
\end{theorem}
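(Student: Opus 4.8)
The plan is to pass to the equivalent description of $\scrR$-boundedness in terms of Rademacher functions and then combine the pointwise kernel hypothesis with Minkowski's integral inequality and the $L_p$-boundedness of $K_0$. Let $r_1,r_2,\ldots$ be a sequence of independent Rademacher functions on $(0,1)$. Since the first $N$ of them realise, as $t$ runs through $(0,1)$, each sign vector $z\in\{\pm1\}^N$ on a set of measure $2^{-N}$, one has $\int_0^1\|\sum_j r_j(t)\xi_j\|^p\,dt=2^{-N}\sum_{z\in\{\pm1\}^N}\|\sum_j z_j\xi_j\|^p$ in any Banach space, so the normalising factors cancel in the quotient defining $\scrR_p$. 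Hence the hypothesis is equivalent to
\begin{equation}\label{eq:ptw}
\Big(\int_0^1\Big\|\sum_{j=1}^N r_j(t)\,k_{\lambda_j}(\omega,\omega^\prime)\,x_j\Big\|_Y^p\,dt\Big)^{1/p}\le k_0(\omega,\omega^\prime)\Big(\int_0^1\Big\|\sum_{j=1}^N r_j(t)\,x_j\Big\|_X^p\,dt\Big)^{1/p}
\end{equation}
for all $N\in\nz$, $\lambda_1,\ldots,\lambda_N\in\Lambda$, $x_1,\ldots,x_N\in X$, and a.e.\ $\omega,\omega^\prime\in\Omega$; in particular $k_0\ge0$ a.e., so $K_0$ is a positive integral operator.

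First I would fix $f_1,\ldots,f_N\in L_p(\Omega,X)$ and, using Tonelli's theorem to interchange the $t$- and $\omega$-integrations, write
$$\int_0^1\Big\|\sum_j r_j(t)\,K_{\lambda_j}f_j\Big\|_{L_p(\Omega,Y)}^p\,dt=\int_\Omega\|F_\omega\|_{L_p((0,1),Y)}^p\,d\omega,\qquad F_\omega(t):=\int_\Omega\sum_j r_j(t)\,k_{\lambda_j}(\omega,\omega^\prime)f_j(\omega^\prime)\,d\omega^\prime.$$
For a.e.\ fixed $\omega$ the element $F_\omega\in L_p((0,1),Y)$ is the Bochner integral over $\omega^\prime$ of the $L_p((0,1),Y)$-valued map $\omega^\prime\mapsto G_{\omega,\omega^\prime}$, where $G_{\omega,\omega^\prime}(t):=\sum_j r_j(t)\,k_{\lambda_j}(\omega,\omega^\prime)f_j(\omega^\prime)$.

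Next, Minkowski's integral inequality (the triangle inequality for the Bochner integral in $L_p((0,1),Y)$) followed by the pointwise estimate \eqref{eq:ptw} applied with $x_j=f_j(\omega^\prime)$ gives
$$\|F_\omega\|_{L_p((0,1),Y)}\le\int_\Omega\|G_{\omega,\omega^\prime}\|_{L_p((0,1),Y)}\,d\omega^\prime\le\int_\Omega k_0(\omega,\omega^\prime)\,g(\omega^\prime)\,d\omega^\prime=(K_0g)(\omega),$$
with the nonnegative scalar function $g(\omega^\prime):=\big(\int_0^1\|\sum_j r_j(t)f_j(\omega^\prime)\|_X^p\,dt\big)^{1/p}$, which lies in $L_p(\Omega)$ and satisfies $\|g\|_{L_p(\Omega)}^p=\int_0^1\|\sum_j r_j(t)f_j\|_{L_p(\Omega,X)}^p\,dt$. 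Raising to the $p$-th power, integrating over $\omega$, and invoking $K_0\in\scrL(L_p(\Omega))$ yields
$$\int_0^1\Big\|\sum_j r_j(t)\,K_{\lambda_j}f_j\Big\|_{L_p(\Omega,Y)}^p\,dt\le\|K_0\|_{\scrL(L_p(\Omega))}^p\,\|g\|_{L_p(\Omega)}^p=\|K_0\|_{\scrL(L_p(\Omega))}^p\int_0^1\Big\|\sum_j r_j(t)f_j\Big\|_{L_p(\Omega,X)}^p\,dt.$$
Taking $p$-th roots and passing to the supremum over $N$, the $\lambda_j$ and the $f_j$ gives $\scrR_p(T)\le\|K_0\|_{\scrL(L_p(\Omega))}$, as claimed; note that working throughout with the single exponent $q=p$ (rather than invoking Kahane's inequality) is what produces the sharp constant.

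The step I expect to need the most care is the one where $F_\omega$ is treated as a Bochner integral and Minkowski's inequality is applied: one has to check that $\omega^\prime\mapsto G_{\omega,\omega^\prime}$ is strongly measurable as an $L_p((0,1),Y)$-valued function and that it is integrable for a.e.\ $\omega$. The first is routine from the measurability of the kernels $k_\lambda$ and of the $f_j$ (approximate the $f_j$ by simple functions), and the second follows a posteriori from the bound by $(K_0g)(\omega)<\infty$. Alternatively, and perhaps cleanest, one first proves the estimate for $f_j$ ranging over a dense subspace (say simple functions, or $C_c(\Omega,X)$) on which all integrals converge absolutely and $F_\omega(t)=\sum_j r_j(t)(K_{\lambda_j}f_j)(\omega)$ is literal, and then extends by density.
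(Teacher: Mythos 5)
The paper does not prove this statement; it is quoted verbatim from Denk--Hieber--Pr\"uss \cite{denk-hieber-pruess03} (Proposition~3.3 there). Your argument is correct and is essentially the proof given in that reference: rewrite the discrete sign sums as Rademacher averages with exponent $q=p$ so that Tonelli turns the randomized $L_p(\Omega,Y)$-norm into an $\omega$-integral, apply the triangle inequality for the $L_p((0,1),Y)$-valued Bochner integral in $\omega'$ together with the pointwise $\scrR_p$-hypothesis at $x_j=f_j(\omega')$ to dominate by $(K_0g)(\omega)$, and finish with the $L_p(\Omega)$-boundedness of $K_0$; your closing remarks on measurability and on first working on a dense subspace are exactly the right way to make the Minkowski step rigorous.
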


Now consider $a:\rz^\ell\times\rz^\ell\to\scrL(X,Y)$ and let
 $$[\op(a)u](y)=\int e^{iy\eta}a(y,\eta)\wh{u}(\eta)\,\dbar\eta,\qquad u\in\scrS(\rz^\ell,X),$$
denote the associated pseudodifferential operator, where $\dbar\eta = (2\pi)^{-\ell/2}d\eta$.
Under suitable assumptions on $a$ we have
$\op(a):\scrS(\rz^\ell,X)\to L_p(\rz^\ell,Y)$, say. One may ask when this operator induces a continuous map
$L_p(\rz^\ell,X)\to L_p(\rz^\ell,Y)$. In answering this question the concept of $\scrR$-boundedness plays a
{decisive} role. For example, Girardi and Weis \cite{girardi-weis03} have shown the following:

\begin{theorem}\label{thm:girardi}
Let both $X$ and $Y$ have properties $(\mathcal{H}\mathcal{T})$ and
$(\alpha)$.\footnote{For the definition of these properties we refer the reader to \cite{kunstmann-weis04} or
\cite{denk-hieber-pruess03}.
For us it is sufficient to know that scalar-valued $L_p$-spaces, $1<p<\infty$, have these properties.}
Let $T\subset\scrL(X,Y)$ be $\scrR$-bounded. Then
\begin{align*}
 \Big\{op(a)\st & a\in\scrC^\ell(\rz^\ell_\eta\setminus\{0\},\scrL(X,Y))\text{ with } \\
                        & \eta^\alpha D^\alpha_\eta a(\eta)\in T\text{ for all $\eta\not=0$ and $\alpha\in\{0,1\}^\ell$}\Big\}
\end{align*}
is an $\scrR$-bounded subset of $\scrL\big(L_p(\rz^\ell,X),L_p(\rz^\ell,Y)\big)$ with $\scrR$-bound less than or equal to
$C\scrR(T)$ for some constant $C$ not depending on $T$.
\end{theorem}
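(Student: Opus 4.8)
The plan is to reduce the statement to a scalar Mikhlin–Weis-type multiplier theorem by tensoring with the identity, following the strategy that underlies virtually all operator-valued Fourier multiplier results. First I would recall the operator-valued version of Mikhlin's theorem due to Weis: if $m\in C^\ell(\rz^\ell\setminus\{0\},\scrL(X,Y))$ satisfies $\{\eta^\alpha D^\alpha_\eta m(\eta)\st \eta\neq 0,\ \alpha\in\{0,1\}^\ell\}$ $\scrR$-bounded (which is precisely our hypothesis, with this set contained in $T$), and $X,Y$ have property $(\mathcal{HT})$, then $\op(m)$ extends to a bounded operator on $L_p(\rz^\ell,X)\to L_p(\rz^\ell,Y)$ with operator norm controlled by $C\,\scrR(T)$, the constant depending only on $p$, $\ell$ and the $(\mathcal{HT})$-constants. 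This takes care of each individual operator in the family; the point of the theorem is the \emph{uniform} $\scrR$-boundedness of the whole family.

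The key device for the $\scrR$-bound is the standard Fubini/tensor trick. Given finitely many symbols $a_1,\dots,a_N$ from the family and functions $u_1,\dots,u_N\in\scrS(\rz^\ell,X)$, I would consider the Rademacher averages $\bigl(\int_0^1\|\sum_j r_j(s)\,\op(a_j)u_j\|_{L_p(\rz^\ell,Y)}^p\,ds\bigr)^{1/p}$ and reinterpret this as the $L_p(\rz^\ell,\,L_p((0,1),Y))$-norm of a single vector-valued function, which in turn is produced by applying a \emph{single} multiplier — namely the "diagonal" symbol $\widetilde a(\eta)$ acting on $L_p((0,1);X)$ by $\widetilde a(\eta)\bigl(\sum_j r_j(\cdot)x_j\bigr)=\sum_j r_j(\cdot)\,a_j(\eta)x_j$ — to the function $\sum_j r_j(\cdot)u_j(\cdot)$. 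The properties $(\mathcal{HT})$ and $(\alpha)$ on $X,Y$ pass to the spaces $L_p((0,1);X)$, $L_p((0,1);Y)$, and — this is where property $(\alpha)$ is essential — the $\scrR$-boundedness of $T$ upgrades to $\scrR$-boundedness, with comparable bound, of the family $\{\widetilde a(\eta)\}$ acting between these Rademacher-average spaces, uniformly over the choice of the $a_j$'s. Indeed property $(\alpha)$ is exactly what allows one to permute two independent systems of Rademacher variables and thereby estimate $\scrR_p\bigl(\{\widetilde a(\eta)\st \eta\neq 0\}\bigr)$ by $C\,\scrR_p(T)$. One then applies Weis's scalar-in-disguise multiplier theorem to $\widetilde a$ on $L_p(\rz^\ell;L_p((0,1);X))$, checking that $\widetilde a$ inherits the differentiated-symbol $\scrR$-bounds from those of the $a_j$ (each $\eta^\alpha D^\alpha_\eta\widetilde a(\eta)$ is the diagonal operator built from $\eta^\alpha D^\alpha_\eta a_j(\eta)\in T$, so its $\scrR$-bound is again $\le C\,\scrR(T)$). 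Unwinding the identification $L_p(\rz^\ell;L_p((0,1);Y))\cong L_p((0,1);L_p(\rz^\ell;Y))$ gives exactly the Rademacher-average inequality defining the $\scrR$-bound of $\{\op(a_j)\}$, with a constant $C\,\scrR(T)$ independent of $N$ and of the chosen symbols.

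The main obstacle, and the step requiring care, is the lifting of $\scrR$-boundedness of $T$ to $\scrR$-boundedness of the diagonal family $\{\widetilde a(\eta)\}$ on the Rademacher-average spaces uniformly in $\eta$: this is precisely the content of property $(\alpha)$ (it is a theorem that $(\alpha)$ is equivalent to the stability of $\scrR$-boundedness under this operation, and fails in general Banach spaces). Everything else — the Fubini identifications, the verification that $L_p((0,1);X)$ retains $(\mathcal{HT})$ and $(\alpha)$, the bookkeeping of the Mikhlin derivative condition for $\widetilde a$ — is routine. Since the paper only needs the case of scalar-valued $L_p$-spaces, one could alternatively quote the Girardi–Weis result as a black box; but the tensor argument above is the natural self-contained route, and it is the same mechanism that reappears later in the paper's reduction to constant-coefficient operators.
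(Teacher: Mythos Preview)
Your sketch is a reasonable outline of the standard Girardi--Weis argument and is essentially correct as a proof strategy. However, you should be aware that the paper does not prove this theorem at all: it is quoted verbatim as a result of Girardi and Weis \cite{girardi-weis03}, with no argument given beyond the citation. So there is no ``paper's own proof'' to compare against; the theorem functions purely as a black box in the paper, and the authors explicitly remark that for their purposes it suffices to know that scalar $L_p$-spaces have the required properties.

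That said, your outline captures the correct mechanism (diagonalization on Rademacher spaces, property $(\alpha)$ to lift $\scrR$-bounds, Fubini to unwind), and your identification of the crucial step --- the passage from $\scrR$-boundedness of $T$ to $\scrR$-boundedness of the diagonal family, which genuinely requires property $(\alpha)$ --- is accurate. If you wanted to include a self-contained argument in place of the citation, what you wrote would be an acceptable sketch; but it goes well beyond what the paper itself does.
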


In other words, this Theorem of Girardi and Weis is the operator-valued generalization of the classical theorem of
Lizorkin on the continuity of Fourier multipliers in $L_p$-spaces. As an immediate consequence one obtains:

\begin{corollary}\label{cor:S-scrR}
Denote by $S^d_\scrR(\rz^{\ell};X,Y)$, $d\in\rz$, the space of all smooth functions $a:\rz^\ell_\eta\to\scrL(X,Y)$
such that
$T_{\alpha}(a):=\big\{\spk{\eta}^{-d+|\alpha|}D^\alpha_\eta a(\eta)\st \eta\in \rz^{\ell}\big\}$
is an $\scrR$-bounded subset of $\scrL(X,Y)$ for any choice of the multi-index $\alpha$.
As shown in {Denk, Krainer} \textnormal{\cite{denk-krainer07}},
this is a Fr\'{e}chet space, by taking as semi-norms the $\scrR$-bounds of $T_{\alpha}(a)$.
If both $X$ and $Y$ have properties $(\mathcal{H}\mathcal{T})$ and $(\alpha)$ then $\op$ induces
a continuous mapping
 $$S^0_\scrR(\rz^{\ell};X,Y)\lra \scrL\big(L_p(\rz^\ell,X),L_p(\rz^\ell,Y)\big).$$
\end{corollary}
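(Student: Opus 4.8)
The plan is to deduce Corollary~\ref{cor:S-scrR} directly from the Girardi--Weis theorem (Theorem~\ref{thm:girardi}) by checking, for a fixed symbol $a\in S^0_\scrR(\rz^\ell;X,Y)$, that $\op(a)$ falls into the $\scrR$-bounded family described there with $T=T_0(a)$, and then upgrading this pointwise statement to continuity of the map $\op\colon S^0_\scrR(\rz^\ell;X,Y)\to\scrL(L_p(\rz^\ell,X),L_p(\rz^\ell,Y))$ via the closed graph theorem (or, more cheaply, by tracking constants). First I would fix $a\in S^0_\scrR$, note that $T_0(a)=\{\spk{\eta}^0 a(\eta):\eta\in\rz^\ell\}=\{a(\eta):\eta\ne 0\}$ is $\scrR$-bounded by definition of the Fr\'echet space, and set $T:=\overline{T_0(a)}$ (or just $T_0(a)$; $\scrR$-boundedness passes to the closure with the same bound, and $0$ may be added harmlessly since $a$ is bounded near the origin but we only need $\eta\ne 0$ in Theorem~\ref{thm:girardi} anyway).

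The key verification is that $\eta^\alpha D_\eta^\alpha a(\eta)\in T$ for all $\eta\ne 0$ and all $\alpha\in\{0,1\}^\ell$. Here the natural move is \emph{not} to try to land literally in $T_0(a)$, but to observe that for $\alpha\in\{0,1\}^\ell$ the function $\eta^\alpha D_\eta^\alpha a(\eta)$ is an $\rz$-linear combination, with coefficients bounded by $1$ in absolute value on the relevant region, of finitely many terms each of which lies in the $\scrR$-bounded set $T_\alpha(a)=\{\spk{\eta}^{-0+|\alpha|}D_\eta^\alpha a(\eta):\eta\in\rz^\ell\}$ — indeed $\eta^\alpha D_\eta^\alpha a(\eta)=(\eta^\alpha\spk{\eta}^{-|\alpha|})\cdot\bigl(\spk{\eta}^{|\alpha|}D_\eta^\alpha a(\eta)\bigr)$ and $|\eta^\alpha\spk{\eta}^{-|\alpha|}|\le 1$. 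Since scalar multiplication by numbers of modulus $\le 1$ and taking finite sums/closures/convex–balanced hulls preserve $\scrR$-boundedness (with the $\scrR$-bound controlled by $\sum_\alpha \scrR(T_\alpha(a))$, using the triangle inequality $\scrR(S+T)\le\scrR(S)+\scrR(T)$ recalled in Section~\ref{sec:rbounded}), the set
\[
  \wt T:=\Bigl\{\eta^\alpha D_\eta^\alpha a(\eta)\st \eta\ne 0,\ \alpha\in\{0,1\}^\ell\Bigr\}
\]
is $\scrR$-bounded with $\scrR(\wt T)\le\sum_{\alpha\in\{0,1\}^\ell}\scrR(T_\alpha(a))$. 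Applying Theorem~\ref{thm:girardi} with this $\wt T$ in place of $T$ (it contains all the derivatives we need, by construction) yields $\op(a)\in\scrL(L_p(\rz^\ell,X),L_p(\rz^\ell,Y))$ with operator norm $\le\scrR(\{\op(a)\})\le C\,\scrR(\wt T)\le C\sum_{\alpha\in\{0,1\}^\ell}\scrR(T_\alpha(a))$.

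The inequality $\|\op(a)\|_{\scrL(L_p,L_p)}\le C\sum_{\alpha\in\{0,1\}^\ell}\scrR(T_\alpha(a))$ is exactly continuity of $\op$ with respect to the Fr\'echet seminorms of $S^0_\scrR$ (a finite sub-family of them), so the proof is complete; alternatively one invokes the closed graph theorem since $S^0_\scrR$ is Fr\'echet and $\scrL(L_p(\rz^\ell,X),L_p(\rz^\ell,Y))$ is Banach, and $\op$ is easily seen to have closed graph (convergence in $S^0_\scrR$ forces pointwise convergence of symbols, hence of $\op(a)u$ for $u\in\scrS$, which is dense). I expect the only genuinely delicate point to be the bookkeeping that $\eta^\alpha D_\eta^\alpha a(\eta)$ for $\alpha\in\{0,1\}^\ell$ — rather than for general $\alpha$ — is what Theorem~\ref{thm:girardi} demands and that this is controlled precisely by the seminorms $T_\alpha(a)$ with $\alpha\in\{0,1\}^\ell$; everything else is the stability of $\scrR$-boundedness under the elementary operations already recorded in Section~\ref{sec:rbounded}.
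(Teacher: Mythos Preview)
Your proposal is correct and matches the paper's intended argument: the paper presents the corollary as an ``immediate consequence'' of Theorem~\ref{thm:girardi} without giving further details, and what you have written is precisely the natural way to unpack that remark---use $|\eta^\alpha|\le\spk{\eta}^{|\alpha|}$ together with Kahane's contraction principle to place $\eta^\alpha D_\eta^\alpha a(\eta)$ in an $\scrR$-bounded set controlled by the seminorms $\scrR(T_\alpha(a))$ for $\alpha\in\{0,1\}^\ell$, then apply Theorem~\ref{thm:girardi}. The only cosmetic point is that the stability fact you need for $\wt T$ is $\scrR(S\cup T)\le\scrR(S)+\scrR(T)$ (a union, not a sumset), but this is standard and follows from the same triangle-inequality reasoning.
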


For the interested reader, we refer to  {Portal, Strkalj} \cite{portal-strkalj06} for a more general result on the $L_p$-continuity
of pseudodifferential operators with symbols in {operator-valued} $S^0_{\varrho,\delta}$-classes of H\"ormander type.

\section{Boutet de Monvel's calculus with parameters}\label{sec:2}

In this section, we will present some elements of a parameter-dependent version of Boutet de Monvel's calculus
\cite{boutetdemonvel71} which we use to describe solution operators of parameter-elliptic boundary value problems
subject to homogeneous boundary conditions. The elements of this calculus  are operators of the form
\begin{equation}\label{eq:boutet}
 P(\tau,\mu) = A_{+}(\tau,\mu)+G(\tau,\mu):\;\scrS(\rz^n_+)\lra\scrS(\rz^n_+)
\end{equation}
(extending by continuity to Sobolev spaces), where $k,\ell\in\nz$ are some natural {numbers},
$\rz^n_+$ denotes the half-space
 $$\rz^n_+=\Big\{x=(x^\prime,x_n)\in\rz^{n}\st x_n>0\Big\}$$
and $\scrS(\rz^n_+)$ consists of all functions obtained by restricting rapidly decreasing functions from $\rz^n$
to the half-space $\rz^n_+$ $($this space is a Fr\'{e}chet space by identification with the quotient space
$\scrS(\rz^n)/N$, where $N:=\{u\in\scrS(\rz^n_+)\mid u=0\text{ on }\rz^n_+\}$ is a closed subspace of
$\scrS(\rz^n)$$)$.

In \eqref{eq:boutet}, $A_+(\tau,\mu)$ is a parameter-dependent pseudodifferential operator and $G(\tau,\mu)$ a
so-called parameter-dependent Green operator (one also speaks of \emph{singular} Green operators; however for
convenience we omit the term `singular'). We shall consider two classes of Green operators which are \emph{weakly}
and \emph{strongly} parameter-dependent, respectively.

In the following, we let $\Sigma$ denote a closed sector in the two-dimensional plane
with vertex at the origin. We call a function smooth on $\Sigma$ provided all partial
derivatives exist in the interior and extend continuously to  $\Sigma$.

We shall frequently make use of pseudodifferential symbols taking values in Fr\'{e}chet spaces.
To this end, let us give the following definition:

\begin{definition}\label{def:E-symb}
Let $E$ be a Fr\'{e}chet space with a system $\{p_j\st j\in\nz\}$ of semi-norms determining
its topology. We let $S^{d}(\rz^{m}; E)$, $d\in\rz$, denote the space of all smooth functions
$a:\rz^{m}\to E$ satisfying uniform estimates
\begin{equation}\label{eq:E-valued}
 q_{j,\alpha}(a):=\sup_{y\in\rz^m}p_j\big(\spk{y}^{|\alpha|-d}D^\alpha_{y} a(y)\big)<\infty
\end{equation}
for every $j$ and every multi-index $\alpha$. These semi-norms make $S^{d}(\rz^{m}; E)$ a Fr\'{e}chet
space. In case $E=\cz$ we suppress $E$ from the notation.

The subspace $S^{d}_{\cl}(\rz^{m}; E)$ consists, by definition, of those symbols that
have an expansion into homogeneous components in the following sense: there exist
$a_{(d-\ell)}\in\scrC^\infty(\rz^{m}\setminus\{0\},E)$ satisfying
 $$a_{(d-\ell)}(ty)=t^{d-\ell}a_{(d-\ell)}(y),\qquad t>0,\quad y\not=0, $$
such that
  $$R_N(a)(y):=a(y)-\sum_{\ell=0}^{N-1}\chi(y)a_{(d-\ell)}(y)\;\in\;
      S^{d-N}(\rz^{m}; E)$$
for any $N\in\nz$, where $\chi$ denotes an arbitrary zero-excision function.
\end{definition}

The space of smooth positively homogeneous functions
$\rz^{m}\setminus\{0\}\to E$
of a fixed degree is canonically isomorphic to $\scrC^\infty({\mathbb S}^{m-1},E)$, the
smooth $E$-valued functions on the unit-sphere in $\rz^m$. We then equip
$S^{d}_{\cl}(\rz^{m}; E)$ with the projective topology with respect to the maps
\begin{align*}
\begin{split}
 a\mapsto a_{(d-\ell)}&:\; S^{d}_{\cl}(\rz^{m}; E)\lra \scrC^\infty({\mathbb S}^{m-1},E),\\
 a\mapsto R_N(a)&:\; S^{d}_{\cl}(\rz^{m}; E)\lra S^{d-N}(\rz^{m}; E),
\end{split}
\end{align*}
where $N$ and $\ell$ run through the non-negative integers.
It will be of some importance for us that
\begin{equation}\label{eq:tensor}
  S^d_\cl(\rz^m;E)=S^d_\cl(\rz^m)\,\wh{\otimes}_\pi\,E,
\end{equation}
where $F\,\wh{\otimes}_\pi\,E$ denotes the completed projective tensor-product of the two
Fr\'{e}chet spaces $E$ and $F$, see for example {Tr\`eves}~\cite{treves67}. In other words, $S^d_\cl(\rz^m;E)$
can be identified with the closure of the algebraic tensor product
 $$S^d_\cl(\rz^m)\,\otimes\,E=\Big\{\sum_{i=1}^N a_ie_i
   \stBig N\in\nz,\; a_i\in S^d_\cl(\rz^m),\; e_i\in E \Big\}$$
with respect to the system of semi-norms
 $$\wh{q}_{j,\alpha}(a)=\inf\Big\{\sum_{i=1}^N q_\alpha(a_i)p_j(e_i)
   \stBig a=\sum_{i=1}^N a_ie_i\Big\},$$
where $q_\alpha$ is as in \eqref{eq:E-valued} with $E=\cz$.

\subsection{Parameter-dependent pseudodifferential operators}\label{sec:2.1}

Let us denote by
\begin{equation*}
 S^{d}_{\const}(\rz^n\times\rz\times\Sigma),\qquad d\in\rz,
\end{equation*}
the space of all smooth functions $a:\rz^{n}_\xi\times\rz_\tau\times\Sigma_\mu\lra\cz$ satisfying
 $$\sup_{(\xi,\tau,\mu)\in\rz^{n}\times\rz\times\Sigma}\big|D^\alpha_\xi D^k_\tau
   D^\gamma_\mu a(\xi,\tau,\mu)\big|
   \spk{\xi,\tau,\mu}^{|\alpha|+|\gamma|+k-d}<\infty$$
for every order of derivatives. This is a Fr\'{e}chet space and we can define
\begin{equation}\label{eq:Sd}
 S^{d}(\rz^n\times\rz^n\times\rz\times\Sigma):=
 S^0_\cl(\rz^n_x;{S^{d}_{\const}(\rz^n_\xi\times\rz_\tau\times\Sigma_\mu)}).
\end{equation}

{
\begin{remark}\label{rem:12345}
Let us emphasise that a symbol $a(x,\xi,\tau,\mu)$ from \eqref{eq:Sd}
{does not only satisfy} the {standard} uniform symbol estimates
 $$\sup_{(x,\xi,\tau,\mu)\in\rz^n\times\rz^{n}\times\rz\times\Sigma}
     \big|D^\beta_x D^\alpha_\xi D^k_\tau D^\gamma_\mu a(x,\xi,\tau,\mu)\big|
     \spk{\xi,\tau,\mu}^{|\alpha|+|\gamma|+k-d}<\infty$$
but also has an expansion into homogeneous components {of decreasing degree} with
respect to the $x$-variable. In particular, if $a$ satisfies the above estimates and has compact
$x$-support, it belongs to {the space  \eqref{eq:Sd}}. Symbols of the latter type typically
arise when working on compact manifolds, by using local coordinate systems and subordinate
partitions of unity.
\end{remark}
}

With a symbol $a$ from \eqref{eq:Sd} we associate a family of pseudodifferential operators
 $$A(\mu,\tau)=\op(a)(\mu,\tau):\scrS(\rz^n)\to\scrS(\rz^n)$$
in the standard way, i.e.,
 $$[A(\mu,\tau)u](x)=\int e^{ix\xi}a(x,\xi,\tau,\mu)\widehat{u}(\xi)\,\dbar\xi.$$
This map can be extended to a map $\scrS^\prime(\rz^n)\to\scrS^\prime(\rz^n)$ in the space of tempered
distributions.
Now let
 $$e_+:\scrS(\rz^n_+)\lra \scrS^\prime(\rz^n), \qquad r_+:\scrS^\prime(\rz^n)\lra \scrD^\prime(\rz_+^n),$$
be the operators of extension by $0$ and restriction to the half-space, respectively.
For $A(\mu,\tau)$ as above we set
\begin{equation*}
 A_+(\mu,\tau)=\op(a)_+(\tau,\mu):=r_+\circ A(\mu,\tau)\circ e_+.
\end{equation*}
This gives rise to a map $\scrS(\rz^n_+)\to \scrC^\infty(\rz^n_+)$, for example.
If $d=0$ it induces maps
\begin{equation}\label{eq:op_+}
 A_+(\mu,\tau):L_p(\rz^n_+) \lra L_p(\rz^n_+),\qquad 1<p<\infty.
\end{equation}
It is this mapping \eqref{eq:op_+} we will be most interested in, and we shall analyze it below for the symbol class
we have just introduced. However, for motivations of the calculus $($for example, to ensure that $A_+(\tau,\mu)$
preserves the space $\scrS(\rz^n_+)$ and that the operators behave nicely under standard operations like
composition$)$ one actually needs to require an additional property of the symbols: the so-called two-sided
\emph{transmission condition} with respect to the boundary of $\rz^n_+$. For a symbol $a$ of order $d$ as above,
the condition requires that, for any choice of $k\in\nz_0$,
\begin{align*}
 \scrF^{-1}_{\xi_n\to z}D^k_{x_n}&
     p(x^\prime,0,\xi^\prime,\spk{\xi^\prime,\tau,\mu}\xi_n,\tau,\mu)\Big|_{\pm z>0}\\
     &\in{ S^{-k}_\cl}\Big(\rz^{n-1}_{x^\prime};{S^d\big(\rz^{n-1}_{\xi^\prime}\times\rz_\tau\times\Sigma_\mu;
     \scrS({\rz}_{\pm,z})}\big)\Big),
\end{align*}
i.e., the  restriction of the distribution
$\scrF^{-1}_{\xi_n\to z}D^k_{x_n}
p(x^\prime,0,\xi^\prime,\spk{\xi^\prime,\tau,\mu}\xi_n,\tau,\mu)\in\scrS^\prime(\rz_z)$
to the half-space $\rz_+$ or $\rz_-$ can be extended to a rapidly decreasing function on $\rz$, and the other
variables enter as parameters in the indicated specific way
{(cf.\ Definition 2.3 in {Schrohe} \cite{schrohe01}, replacing there $\xi^\prime$ by $(\xi^\prime,\tau,\mu)$ and
passing to the inverse Fourier transform; note that the inverse Fourier transform of a polynomial has support in the
origin $z=0$ and thus is eliminated by restriction to the half-line $\pm z>0$).
}
Here, $\spk{\xi',\tau,\mu} := (1+|\xi'|^2+\tau^2+|\mu|^2)^{1/2}$.
Symbols with the transmisssion condition form a closed subspace of $S^{d}(\rz^n\times\rz^n\times\rz\times\Sigma)$
that we shall denote by
\begin{equation*}
 S^{d}_\tr(\rz^n\times\rz^n\times\rz\times\Sigma).
\end{equation*}

\begin{remark}
The operator $A_+(\mu,\tau)=\op(a)_+(\tau,\mu)$ does not depend on the values of the symbol
$a$ for $x_n<0$. Hence, if we define $S^{d}_{-}(\rz^n\times\rz^n\times\rz\times\Sigma)$
as the closed subspace of symbols from $S^{d}_\tr(\rz^n\times\rz^n\times\rz\times\Sigma)$
whose $x$-support is contained in half-space $\{x\in\rz^n\st x_n\le0\}$, then the class of
operators is isomorphic to the quotient
 $$S^{d}_\tr(\rz^n\times\rz^n\times\rz\times\Sigma)\big/
   S^{d}_-(\rz^n\times\rz^n\times\rz\times\Sigma),$$
yielding a natural Fr\'{e}chet topology.
\end{remark}

\subsection{Parameter-dependent Green operators}\label{sec:2.2}

We shall use the splitting $\rz_+^n=\rz^{n-1}\times\rz_+$ and write $x=(x^\prime,x_n)$ and, correspondingly,
$\xi=(\xi^\prime,\xi_n)$ for the covariable $\xi$ to $x$. Roughly speaking, Green operators in tangential direction
(i.e., on $\rz^{n-1}$) act like pseudodifferential operators while in normal direction (i.e., on $\rz_+$) they act like
integral operators with smooth kernel. However, there is a certain twisting between the two directions which reflects
in a specific structure of the operators. To describe this structure we shall need the function $\varrho$ defined by
\begin{equation*}
 \varrho(\xi^\prime,\tau,\mu):=\spk{\mu}\spk{\xi^\prime,\tau,\mu}^{-1}.
\end{equation*}
Note that $0<\varrho\le1$. Now let
\begin{equation*}
 R^{d,\nu}_\const(\rz^{n-1}\times\rz\times\Sigma),\qquad {d\in\rz,\;\nu\ge0},
\end{equation*}
denote the space of all smooth scalar-valued functions $k(\xi^\prime,\tau,\mu;x_n,y_n)$
satisfying uniform estimates
\begin{align}\label{eq:kernel5}
\begin{split}
   \big\|x_n^\ell D^{\ell^\prime}_{x_n}
 & y_n^m D^{m^\prime}_{y_n}
   D^{\alpha^\prime}_{\xi^\prime}D^k_\tau D_\mu^{\gamma}
   k(\xi^\prime,\tau,\mu;x_n,y_n)
   \big\|_{L^2(\rz_{+,x_n}\times\rz_{+,y_n})}\\
 \le&
   C_{\alpha^\prime,\ell,\ell^\prime,m,m^\prime}(k)
   \Big(
    \varrho(\xi^\prime,\tau,\mu)^{\nu-[\ell-\ell^\prime]_+-[m-m^\prime]_+ -|\alpha^\prime|-k}+1
   \Big)\times\\
 &\times\spk{\xi^\prime,\tau,\mu}^{
   d-\ell+\ell^\prime-m+m^\prime-|\alpha^\prime|-k-|\gamma|},
\end{split}
\end{align}
for every order of derivatives and any $\ell,m\in\nz_0$; here $[s]_+=\max(s,0)$ for any real number $s$.
We call such a $k$ a \emph{weakly parameter-dependent} symbol kernel of order $d$ and regularity $\nu$
$($with constant coefficients$)$, see also \cite{grubb86}. The best constants define a system of semi-norms,
yielding a Fr\'{e}chet topology. We set
\begin{equation*}
 R^{d,\nu}(\rz^{n-1}\times\rz^{n-1}\times\rz\times\Sigma)=
 S^0_\cl(\rz^{n-1}_{x^\prime};{R^{d,\nu}_\const(\rz^{n-1}_{\xi^\prime}\times\rz_\tau\times\Sigma_\mu)})
\end{equation*}
The class of \emph{strongly parameter-dependent} symbol kernels
\begin{equation}\label{eq:rdconst}
 R^{d}_\const(\rz^{n-1}\times\rz\times\Sigma):=\bigcap_{\nu\in\rz}
 R^{d,\nu}_\const(\rz^{n-1}\times\rz\times\Sigma)
\end{equation}
consists of those symbol kernels satisfying the uniform estimates
\begin{align*}
\begin{split}
   \big\|x_n^\ell D^{\ell^\prime}_{x_n}
 & y_n^m D^{m^\prime}_{y_n} D^\alpha_{\xi^\prime} D^k_\tau D^\gamma_\mu
   k(\xi^\prime,\tau,\mu;x_n,y_n)\big\|_{L_2(\rz_{+,x_n}\times\rz_{+,y_n})} \\
 & \le C_{\alpha^\prime,\ell,\ell^\prime,m,m^\prime}(k)
   \spk{\xi^\prime,\tau,\mu}^{d-\ell+\ell^\prime-m+m^\prime-|\alpha|-|\gamma|-k}
\end{split}
\end{align*}
for any order of derivatives and any $\ell,m\in\nz_0$.
Again this is a Fr\'{e}chet space, and we have
\begin{align}\label{eq:rd}
\begin{split}
   R^{d}(\rz^{n-1}\times\rz^{n-1}\times\rz\times\Sigma):=&
   \mathop{\mbox{\Large$\cap$}}_{\nu\in\rz}
   R^{d,\nu}(\rz^{n-1}\times\rz^{n-1}\times\rz\times\Sigma)\\
 =& S^0_\cl(\rz^{n-1}_{x^\prime};{R^{d}_\const(\rz^{n-1}_{\xi^\prime}\times\rz_\tau\times\Sigma_\mu)}).
\end{split}
\end{align}

{Let us point out once more the dependence on $x^\prime$ as a classical symbol of order $0$ and not only as
a function bounded with all its derivatives, cf.\ Remark \ref{rem:12345}. In particular, the class of Green symbols
defined above is a subclass of that defined in \cite{grubb-kokholm93}.}

\begin{definition}\label{def:green1}
A weakly parameter-dependent Green operator of order
$d\in\rz$, \emph{type} $r=0$, and regularity $\nu$ is of the form
\begin{equation}\label{eq:kernel4}
 [G(\tau,\mu)u](x)=\int e^{ix^\prime\xi^\prime} \int_0^\infty
 k(x^\prime,\xi^\prime,\tau,\mu;x_n,y_n)\scrF_{y^\prime\to\xi^\prime}
 u(\xi^\prime,y_n)\,dy_n\dbar\xi^\prime
\end{equation}
where $k\in R^{d,\nu}(\rz^{n-1}\times\rz^{n-1}\times\rz\times\Sigma)$ is a weakly
parameter-dependent symbol kernel of order $d$ and regularity $\nu$ as introduced above; we
occasionally shall write $G(\tau,\mu)=\op(k)(\tau,\mu)$.
Parameter-dependent Green operators of order $d\in\rz$, type $r\in\nz$, and regularity $\nu$ have
the form
\begin{equation}\label{eq:sum}
 G(\tau,\mu)=G_0(\tau,\mu)+\sum_{j=1}^{r} G_j(\tau,\mu)D_{x_n}^j
\end{equation}
where each $G_j$ has order $d-j$, type $0$, and regularity $\nu$.
We shall denote this class of operators by $G^{d,r,\nu}(\rz^{n}_+;\rz\times\Sigma)$.
Analogously, we obtain the classes $G^{d,r}(\rz^{n}_+;\rz\times\Sigma)$ of strongly
parameter-dependent Green operators, using strong\-ly parameter-dependent symbols kernel.
The subclasses $G^{d,r,\nu}_\const$ and $G^{d,r}_\const$ refer to symbol kernels
that do not depend on the $x^\prime$-variable.
\end{definition}

All the previously introduced spaces of Green operators inherit a Fr\'{e}chet topology from the underlying spaces of
symbol kernels (factoring out the ambiguity of representing Green operators as different linear combinations
{or, in other words, forming the non-direct sum of  Fr\'{e}chet spaces}).

{
The definition of Green operators in Definition \ref{def:green1} is in the spirit of {Schrohe, Schulze} \cite{schrohe-schulze94} and, at
the first glance, differs from the original approach in {Boutet de Monvel} \cite{boutetdemonvel71}, cf.\ also {Grubb, Kokholm} \cite{grubb-kokholm93}.
However, both approaches are equivalent as was shown in Lemma 2.2.14 of {Schrohe, Schulze} \cite{schrohe-schulze94} in case of
strong parameter-dependence; weak parameter-dependence can be treated similarly.
}

{
Below we shall make use of an alternative characterisation of strongly parameter-dependent
Green operators, see Theorem 3.7 in {Schrohe} \cite{schrohe01}, for instance (our variables $x^\prime$ and
$(\xi^\prime,\tau,\mu)$ {correspond} to $y$ and $\eta$, respectively, in \cite{schrohe01};
the assumption  in \cite{schrohe01} that both $y$ and $\eta$ belong to some $\rz^q$ is only devoted to the
context and can be relaxed without any difficulty to $y\in\rz^p$ and $\eta\in\rz^q$ with different dimensions
$p$ and $q$).
}

\begin{proposition}\label{prop:kernel}
Any strongly parameter-dependent Green operator of order $d$ and type $0$ has a symbol kernel of the form
\begin{equation*}
 k(x^\prime,\xi^\prime,\tau,\mu;x_n,y_n)=\wt{k}(x^\prime,\xi^\prime,\tau,\mu;
 \spk{\xi^\prime,\tau,\mu}x_n,\spk{\xi^\prime,\tau,\mu}y_n).
\end{equation*}
Here,
\begin{equation*}
 \wt{k}(x^\prime,\xi^\prime,\tau,\mu;s_n,t_n)\,\in\,
 S^0_{\cl}\big(\rz^{n-1}_{x^\prime};
 S^{d+1}\big({\rz^{n-1}_{\xi^\prime}\times\rz_\tau\times\Sigma_\mu};
 \scrS({\rz}_{+,s_n}\times{\rz}_{+,t_n})\big)\big),
\end{equation*}
where $\scrS({\rz}_+\times{\rz}_+)=\scrS(\rz^2)\big|_{\rz_+\times\rz_+}$ and
$S^{d}(\rz^{n-1}\times\rz\times\Sigma;E)$ for a Fr\'{e}chet space $E$ is defined as in Definition
$\mathrm{\ref{def:E-symb}}$,
replacing $\rz^m$ by $\rz^{n-1}\times\rz\times\Sigma$.
\end{proposition}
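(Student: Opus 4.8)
The plan is to translate the $L^2$-based symbol-kernel estimates defining $R^d_\const$ into the claimed Schwartz-space-valued symbol estimates after the rescaling $s_n=\spk{\xi',\tau,\mu}x_n$, $t_n=\spk{\xi',\tau,\mu}y_n$, and then lift from constant coefficients to the full $x'$-dependent class by the tensor-product identity \eqref{eq:tensor}. Write $\lambda:=\spk{\xi',\tau,\mu}$ and set $\wt k(\xi',\tau,\mu;s_n,t_n):=k(\xi',\tau,\mu;\lambda^{-1}s_n,\lambda^{-1}t_n)$ in the constant-coefficient case first; the $x'$-dependence is inert for this step since everything is uniform in $x'$. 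The target is to show $\wt k\in S^{d+1}(\rz^{n-1}_{\xi'}\times\rz_\tau\times\Sigma_\mu;\scrS(\rz_{+,s_n}\times\rz_{+,t_n}))$, i.e. for each pair of multi-indices $(\alpha',k,\gamma)$ in the $(\xi',\tau,\mu)$-variables and each pair $(\ell,\ell',m,m')$ controlling the $\scrS$-seminorm, a bound of the form $\lambda^{|\alpha'|+k+|\gamma|-(d+1)}$ times a constant. The Schwartz seminorm of a function of $(s_n,t_n)$ I would take to be $\sup_{s_n,t_n}|s_n^{\ell}\partial_{s_n}^{\ell'}t_n^m\partial_{t_n}^{m'}(\cdot)|$; since $\rz_+\times\rz_+$ is one-dimensional in each variable, these $\sup$-seminorms are equivalent to $L^2$-type seminorms of one order higher, so it suffices to control $\|s_n^{\ell}\partial_{s_n}^{\ell'}t_n^m\partial_{t_n}^{m'}\wt k\|_{L^2(\rz_+\times\rz_+)}$ for all indices (adding one extra derivative where needed for the Sobolev embedding $H^1(\rz_+)\hookrightarrow L^\infty(\rz_+)$ in each variable, which costs only finitely many more seminorms of $k$).

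The core computation is the chain rule under rescaling. Each $\partial_{s_n}$ acting on $\wt k$ produces a factor $\lambda^{-1}$ times $\partial_{x_n}k$ evaluated at the rescaled arguments, and a factor $s_n=\lambda x_n$ converts to $\lambda$ times $x_n$; similarly for the $t_n$, $y_n$ pair. The $(\xi',\tau,\mu)$-derivatives are more delicate because $\lambda$ itself depends on these variables: $D^{\alpha'}_{\xi'}D^k_\tau D^\gamma_\mu$ hitting $\wt k(\xi',\tau,\mu;\lambda x_n,\lambda y_n)$ generates, via Faà di Bruno, sums of terms in which some derivatives land on the symbol-kernel slots and others land on the arguments $\lambda x_n$, $\lambda y_n$ through derivatives of $\lambda$. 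Here one uses the standard facts that $D^{\alpha'}_{\xi'}D^k_\tau D^\gamma_\mu \lambda$ is $O(\lambda^{1-|\alpha'|-k-|\gamma|})$ and that $x_n D^{(\cdot)}_{(\xi',\tau,\mu)}\lambda$, when paired against a factor of $\partial_{x_n}k$, can be reabsorbed: every time a derivative of $\lambda$ contributes a factor $x_n$ (or $y_n$), it is compensated by an $x_n\partial_{x_n}$ (or $y_n\partial_{y_n}$) falling on $k$, which by \eqref{eq:kernel5} (with $\nu$ arbitrary, hence the $\varrho$-factor is a pure power of $\lambda$ bounded by a constant since $\varrho\le 1$) is again of the form constant times $\lambda^{d-(\text{orders})}$. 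Collecting the powers of $\lambda$: the $L^2$-norm over $(x_n,y_n)$ of the original estimate carries $\lambda^{d}$, the change of variables $dx_n\,dy_n=\lambda^{-2}ds_n\,dt_n$ contributes $\lambda^{-1}$ to the $L^2$-norm, and bookkeeping the explicit $\lambda$-powers from the monomials $s_n^\ell$, derivatives $\partial_{s_n}^{\ell'}$, etc., together with the $\lambda$-powers from the $(\xi',\tau,\mu)$-derivatives, yields exactly the net exponent $d+1-|\alpha'|-k-|\gamma|$ claimed by the definition of $S^{d+1}(\,\cdot\,;\scrS(\rz_+\times\rz_+))$. Smoothness of $\wt k$ in all variables up to the boundary $s_n=0$ or $t_n=0$, and rapid decay as $s_n,t_n\to\infty$, follow from the corresponding properties of $k$ together with the estimates just derived, so $\wt k$ indeed takes values in $\scrS(\rz_+\times\rz_+)$.

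For the passage to variable $x'$-coefficients I would invoke \eqref{eq:tensor}: by definition $R^d(\rz^{n-1}\times\rz^{n-1}\times\rz\times\Sigma)=S^0_\cl(\rz^{n-1}_{x'};R^d_\const)$ and the constant-coefficient rescaling map $k\mapsto\wt k$ is, by the computation above, a continuous linear map $R^d_\const(\rz^{n-1}_{\xi'}\times\rz_\tau\times\Sigma_\mu)\to S^{d+1}(\rz^{n-1}_{\xi'}\times\rz_\tau\times\Sigma_\mu;\scrS(\rz_+\times\rz_+))$; tensoring with $\mathrm{id}$ on $S^0_\cl(\rz^{n-1}_{x'})$ and completing projectively (using that $S^0_\cl(\rz^{n-1}_{x'};F)=S^0_\cl(\rz^{n-1}_{x'})\wh\otimes_\pi F$ for any Fréchet space $F$, which is \eqref{eq:tensor}) gives the continuous map into $S^0_\cl(\rz^{n-1}_{x'};S^{d+1}(\rz^{n-1}_{\xi'}\times\rz_\tau\times\Sigma_\mu;\scrS(\rz_+\times\rz_+)))$, which is the target space of the proposition. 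I expect the main obstacle to be the careful bookkeeping in the second paragraph: keeping track, through the Faà di Bruno expansion of $D_{(\xi',\tau,\mu)}$ applied after the $\lambda$-rescaling, of how each derivative of $\lambda$ simultaneously lowers the $\lambda$-exponent by one and inserts a compensating $x_n$ or $y_n$ that must be paired with a normal derivative of $k$ to stay inside the estimate \eqref{eq:kernel5}; organizing this so that the net power of $\lambda$ comes out exactly right, and confirming that only finitely many seminorms of $k$ (with $\nu$ ranging over a bounded set, which is harmless here since $k\in R^d_\const=\bigcap_\nu R^{d,\nu}_\const$) are needed to bound a given seminorm of $\wt k$, is the technical heart of the argument. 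The reduction of the $\scrS$-seminorms to $L^2$-seminorms via one-dimensional Sobolev embedding is routine but should be stated, since it is what allows direct use of the $L^2$-based definition of $R^d_\const$.
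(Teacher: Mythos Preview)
Your argument is correct in outline and in its main computations: the rescaling $s_n=\spk{\xi',\tau,\mu}x_n$, $t_n=\spk{\xi',\tau,\mu}y_n$ together with the Jacobian factor $\lambda$ converts the $L_2$-estimate of order $d$ for $k$ into an $L_2$-estimate of order $d+1$ for $\wt k$; the Fa\`a di Bruno terms arising from $(\xi',\tau,\mu)$-derivatives of $\lambda$ indeed produce expressions of the form $(\partial\lambda/\lambda)\,x_n\partial_{x_n}k$ (and the analogous $y_n$-terms), which are controlled by the same family of estimates and cost exactly one power of $\lambda$; the passage from $L_2$-seminorms to the $\scrS(\rz_+\times\rz_+)$ sup-seminorms via one-dimensional Sobolev embedding is valid; and the tensor-product lift to $x'$-dependent symbols via \eqref{eq:tensor} is the right way to finish. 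One small point: you invoke \eqref{eq:kernel5} with ``$\nu$ arbitrary'' to kill the $\varrho$-factor, but the paper already records the direct $\varrho$-free estimates for $R^d_\const$ immediately after \eqref{eq:rdconst}, so you may as well use those and avoid the detour through the weak class.

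As for the comparison with the paper: the paper does not prove this proposition. The text preceding it simply cites Theorem~3.7 of Schrohe~\cite{schrohe01} (with the parameter $(\xi',\tau,\mu)$ playing the role of $\eta$ there) as the source of this alternative characterisation. Your sketch is, in effect, a reconstruction of the argument behind that cited result; what you have written is the standard proof and is essentially what one finds in the reference. So there is no discrepancy in method to discuss---you have supplied the proof the paper chose to outsource.
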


\subsection{Some elements of the calculus}\label{sec:2.3}

Having described parameter-dependent pseudodifferential and Green operators let us introduce the spaces
 $$ B^{d,r}_{(\mathrm{const})}(\rz^{n}_+;\rz\times\Sigma),
    \qquad
    B^{d,r,\nu}_{(\mathrm{const})}(\rz^{n}_+;\rz\times\Sigma) $$
consisting of operators $A_+(\tau,\mu)+G(\tau,\mu)$ with a parameter-dependent pseudodifferential operator
of order $d\in\gz$ as in Section \ref{sec:2.1} and a -- strongly or weakly -- parameter-dependent Green operator
of order $d$, type $r\in\nz_0$, and regularity $\nu\ge0$ as described in Section \ref{sec:2.2}. Using the topologies
of both pseudodifferential operators and Green operators introduced above we obtain natural topologies as
non-direct sums of Fr\'{e}chet spaces. Considering the parameter-dependent operators as families of operators
$\scrS(\rz^n_+)\to\scrS(\rz^n_+)$, the following results hold
({Theorem~5.1 and Theorem~5.3, respectively,  in Grubb, Kokholm \cite{grubb-kokholm93}}).

\begin{theorem}\label{thm:algebra}
The pointwise composition of operators induces continuous mappings
\begin{align*}
 B^{d_0,r_0,\nu_0}(\rz^n_+;\rz\times\Sigma)\times
 B^{d_1,r_1,\nu_1}(\rz^n_+;\rz\times\Sigma)
 \lra B^{d,r,\nu}(\rz^n_+;\rz\times\Sigma)
\end{align*}
where
 $$d=d_0+d_1,\quad r=\max\{r_1,r_0+d_1\},\quad
   \nu=\min\{\nu_0,\nu_1\}.$$
Moreover, the subclass of Green operators forms an ideal, i.e., is preserved under composition from the left or
the right by operators of the full class. Similar statements hold for the classes of strongly parameter-dependent
operators {$($by passing to the intersection over all regularities $\nu\ge0)$}.
\end{theorem}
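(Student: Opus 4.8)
The plan is to prove the composition result by splitting the operators into their constituent pieces and treating each type of composition separately. Write $P_i(\tau,\mu) = A_{i,+}(\tau,\mu) + G_i(\tau,\mu)$ for $i=0,1$, so that the product $P_0 P_1$ expands into four terms: $A_{0,+}A_{1,+}$, $A_{0,+}G_1$, $G_0 A_{1,+}$, and $G_0 G_1$. First I would recall the \emph{Leibniz-type} formula for $\op(a_0)\op(a_1)$ in the parameter-dependent class: the symbol of the composition has an asymptotic expansion $\sum_\gamma \frac{1}{\gamma!}\partial_\xi^\gamma a_0 \cdot D_x^\gamma a_1$, and one checks that each term lies in $S^{d_0+d_1}(\rz^n\times\rz^n\times\rz\times\Sigma)$ using the product rule for the classical $S^0_\cl$-dependence in $x$ together with the $S^{d_i}_\const$-estimates in $(\xi,\tau,\mu)$; that the remainders improve in order follows exactly as in the non-parameter-dependent pseudodifferential calculus, with $(\tau,\mu)$ carried along as additional covariables of weight one. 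The subtlety specific to the half-space is that $A_{0,+}A_{1,+} \neq (A_0 A_1)_+$ in general: the difference is the \emph{leftover term} $L(A_0,A_1) = r_+ A_0 e_- r_- A_1 e_+$, and the heart of this part is to show that $L(A_0,A_1)$ is a strongly (resp. weakly, with the claimed regularity) parameter-dependent Green operator of order $d_0+d_1$ and type $0$ — this is where the transmission condition enters decisively, since it guarantees that the symbol kernel of $L(A_0,A_1)$ has the required structure in the normal variable.

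Next I would handle the mixed terms $A_{0,+}G_1$ and $G_0 A_{1,+}$ and the purely Green term $G_0 G_1$. For these it is cleanest to work directly with symbol kernels: using the representation \eqref{eq:kernel4} and Proposition \ref{prop:kernel} (in the strong case) or the defining estimates \eqref{eq:kernel5} (in the weak case), one computes the symbol kernel of each composition as an oscillatory integral in the tangential covariable together with an ordinary integral in the normal variable $y_n$, and verifies the kernel estimates. The composition $G_0 G_1$, with kernels $k_0(\,\cdot\,;x_n,z_n)$ and $k_1(\,\cdot\,;z_n,y_n)$, produces the kernel $\int_0^\infty k_0(\cdot;x_n,z_n) k_1(\cdot;z_n,y_n)\,dz_n$ up to tangential Leibniz corrections; the $L^2(\rz_{+,z_n})$-Cauchy–Schwarz estimate on the inner integral, combined with the individual bounds \eqref{eq:kernel5}, yields the estimates defining $R^{d_0+d_1,\min\{\nu_0,\nu_1\}}_\const$, and the tracking of $\varrho$-powers gives exactly the minimum of the regularities. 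For $A_+ G$ and $G A_+$ one writes $A_+ = r_+ \op(a) e_+$ and pushes the $\op(a)$ through the Green kernel; because $G$ is smoothing in the interior, the only contributions are again of Green type, and the transmission condition ensures the boundary behaviour is correct. The type bookkeeping — $r = \max\{r_1, r_0 + d_1\}$ — emerges from applying the reduction \eqref{eq:sum} together with the fact that $D_{x_n}^j$ composed from the left with a pseudodifferential or Green operator of order $d_1$ lowers the order by $j$ but raises the type by at most $j$; one must carefully commute the normal derivatives in \eqref{eq:sum} past the operators appearing on their left, and this is precisely what forces the $r_0 + d_1$ contribution.

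Finally, the ideal property: if $G \in G^{d_0,r_0,\nu_0}$ and $P_1 = A_{1,+} + G_1 \in B^{d_1,r_1,\nu_1}$, then $G P_1 = G A_{1,+} + G G_1$ and $P_1 G = A_{1,+} G + G_1 G$ are, by the computations just described, again Green operators — one simply observes that none of the four-term expansion for these products ever produces a genuine pseudodifferential (i.e. non-Green) summand, since a Green factor is present throughout. Continuity of all these maps between the respective Fréchet spaces is then a matter of inspecting the estimates: each seminorm of the output is bounded by a product of finitely many seminorms of the two inputs, which is exactly the statement that the bilinear composition map is continuous on the Fréchet spaces (bilinear and separately continuous suffices here, or one reads off joint continuity directly from the estimates). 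The strongly parameter-dependent statements follow by intersecting over $\nu \ge 0$, using that $\min\{\nu_0,\nu_1\} \to \infty$ as both $\nu_i \to \infty$. I expect the main obstacle to be the leftover-term analysis in the $A_{0,+}A_{1,+}$ composition — verifying that $r_+ A_0 e_- r_- A_1 e_+$ is a Green operator of the asserted order, type $0$, and regularity, with the weak case requiring careful tracking of the $\varrho$-weights through the transmission-condition structure of Proposition \ref{prop:kernel}; everything else is a matter of organised but routine symbol-kernel estimates, and in any case the excerpt notes that these proofs are standard and are cited from Grubb–Kokholm \cite{grubb-kokholm93}.
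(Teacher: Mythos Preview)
The paper does not give its own proof of this theorem: it is simply quoted as Theorem~5.1 of Grubb--Kokholm \cite{grubb-kokholm93}, as you yourself note at the end of your proposal. Your plan --- expanding $P_0P_1$ into the four terms $A_{0,+}A_{1,+}$, $A_{0,+}G_1$, $G_0A_{1,+}$, $G_0G_1$, treating the leftover term $L(A_0,A_1)=r_+A_0e_-r_-A_1e_+$ via the transmission condition, and handling the Green compositions by symbol-kernel estimates with Cauchy--Schwarz in the normal variable --- is the standard strategy and is essentially what the cited reference carries out, so there is nothing to correct.
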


\begin{theorem}\label{thm:adjoint}
If $d\le0$, taking the $($formal$)$ adjoint with respect to the $L_2(\rz^n_+)$-inner products induces
continuous mappings
\begin{align*}
 B^{d,0;\nu}(\rz^n_+;\rz\times\Sigma)
   &\lra B^{d,0;\nu}(\rz^n_+;\rz\times\Sigma).
\end{align*}
The subclasses of Green operators are preserved under taking adjoints.
{Similar statements hold for the classes of strongly parameter-dependent operators.}
\end{theorem}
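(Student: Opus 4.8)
The plan is to split the operator $A_+(\tau,\mu)+G(\tau,\mu)$ and treat the pseudodifferential part and the Green part separately, the adjoint of a sum being the sum of the adjoints. For the pseudodifferential part $A_+(\tau,\mu)=r_+\circ A(\tau,\mu)\circ e_+$ one first records that for $u,v\in\scrS(\rz^n_+)$ one has $(A_+(\tau,\mu)u,v)_{L_2(\rz^n_+)}=(A(\tau,\mu)e_+u,e_+v)_{L_2(\rz^n)}$, and since $d\le 0$ both $A(\tau,\mu)$ and its full-space adjoint $A(\tau,\mu)^\ast=\op(a^\ast)(\tau,\mu)$ are bounded on $L_2(\rz^n)$, so this equals $(e_+u,A(\tau,\mu)^\ast e_+v)_{L_2(\rz^n)}=(u,r_+\circ A(\tau,\mu)^\ast\circ e_+v)_{L_2(\rz^n_+)}$. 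Hence $A_+(\tau,\mu)^\ast=\op(a^\ast)_+(\tau,\mu)$ with $a^\ast$ the usual adjoint symbol $a^\ast\sim\sum_\alpha\frac{1}{\alpha!}\partial_\xi^\alpha D_x^\alpha\overline a$; the hypothesis $d\le 0$ enters only here, to make this identity rigorous via $L_2$-boundedness.

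Next I would show that $a^\ast$ again belongs to $S^d_\tr(\rz^n\times\rz^n\times\rz\times\Sigma)$. The class $S^0_\cl(\rz^n_x;S^d_\const(\rz^n_\xi\times\rz_\tau\times\Sigma_\mu))$ is stable under complex conjugation and under asymptotic summation; applying $D_x^\alpha$ turns a symbol classical of order $0$ in $x$ into one of order $-|\alpha|\le 0$, hence still of that type, while $\partial_\xi^\alpha$ lowers the order of the $S^\bullet_\const$-part from $d$ to $d-|\alpha|\le d$; since the adjoint formula never differentiates in $\tau$ or $\mu$, the estimates in those variables and the homogeneous expansion in $x$ are simply inherited. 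That the two-sided transmission condition survives conjugation, $x$- and $\xi$-differentiation, and asymptotic summation is the one genuinely nontrivial point and the main obstacle; it is, however, classical, and I would quote it from \cite{boutetdemonvel71,grubb86,schrohe01} (reading $(\xi',\tau,\mu)$ there in place of $\xi'$). Continuity of $a\mapsto a^\ast$ then follows from the explicit, termwise continuous nature of the construction together with completeness and the closed graph theorem.

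For the Green part I would view $G(\tau,\mu)$ as a pseudodifferential operator on $\rz^{n-1}$ with operator-valued symbol $k(x',\xi',\tau,\mu;\cdot,\cdot)$, acting on $L_2(\rz_+)$ as the integral operator with kernel $k(x',\xi',\tau,\mu;x_n,y_n)$. Its $L_2(\rz^n_+)$-adjoint is of the same form with symbol kernel $g^\ast\sim\sum_\alpha\frac{1}{\alpha!}\partial_{\xi'}^\alpha D_{x'}^\alpha\,\overline{k(x',\xi',\tau,\mu;y_n,x_n)}$, the interchange $x_n\leftrightarrow y_n$ coming from the pointwise operator adjoint. The leading term lies in $R^{d,\nu}$ because the bounds \eqref{eq:kernel5} are invariant under conjugation and symmetric under exchanging $(x_n,\ell,\ell')$ with $(y_n,m,m')$. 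For $|\alpha|=N\ge 1$, $D_{x'}^\alpha$ only improves the order of the classical $x'$-dependence, whereas reading off \eqref{eq:kernel5} shows $\partial_{\xi'}^\alpha$ maps $R^{d,\nu}$ into $R^{d-N,\nu-N}$, costing $N$ in both order and regularity; the point that saves the day is that $R^{d-N,\nu-N}\subset R^{d,\nu}$, which follows from $0<\varrho\le 1$ and $\spk{\mu}\ge 1$ (so that $\varrho^{-N}\le\spk{\xi',\tau,\mu}^N$, letting one trade $N$ powers of $\spk{\xi',\tau,\mu}$ against the drop in regularity). Hence every term, and by Borel-type asymptotic summation in these Fr\'echet classes also $g^\ast$ itself, lies in $R^{d,\nu}(\rz^{n-1}\times\rz^{n-1}\times\rz\times\Sigma)$; the type stays $0$, no $\tau,\mu$-derivatives intervene, and $\op(g^\ast)(\tau,\mu)=G(\tau,\mu)^\ast$ is checked by a routine oscillatory-integral argument (or by continuity from the $x'$-independent case, where the formula is exact). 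Since the construction sends Green operators to Green operators and pseudodifferential operators to pseudodifferential operators of the same order, both the preservation of the Green subclasses and continuity follow; and the strongly parameter-dependent statement is then automatic, the strong classes being $\bigcap_{\nu\ge 0}$ of the weak ones by \eqref{eq:rdconst} and \eqref{eq:rd}, an intersection that the argument preserves level by level.
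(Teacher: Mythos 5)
The paper offers no proof of this statement---it is quoted from Grubb--Kokholm \cite{grubb-kokholm93} (Theorem 5.3), and your sketch is exactly the standard symbol-calculus argument underlying that result. Its key points are sound: $r_+^{*}=e_+$ on $L_2$ gives $A_+(\tau,\mu)^{*}=(A^{*})_+(\tau,\mu)$ precisely because $d\le0$ makes $A$ and $A^{*}$ bounded on $L_2(\rz^n)$ (for $d>0$ boundary terms of negative type would appear); the estimates \eqref{eq:kernel5} are symmetric under exchanging $(x_n,\ell,\ell^\prime)$ with $(y_n,m,m^\prime)$; and the identity $\varrho^{-N}\spk{\xi^\prime,\tau,\mu}^{-N}=\spk{\mu}^{-N}\le1$ correctly shows that the $\xi^\prime$-derivatives in the adjoint expansion cost nothing in order or regularity, so every term and hence the asymptotic sum stays in $R^{d,\nu}$.
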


It has been shown in {Grubb, Kokholm}  \cite{grubb-kokholm93} that the operators extend by continuity from the spaces of
Schwarz functions to $L_p$-Sobolev spaces. In fact, if we set, with $s\in\rz$ and $1<p<\infty$,
\begin{align*}
 H^s_p(\rz^n_+)
  &=\Big\{u|_{\rz^n_+} :  u\in H^s_p(\rz^n) \Big\}\cong H^s_p(\rz^n)\big/N^s_{p},
\end{align*}
where
 $$N^s_{p}:=\Big\{u\in H^s_p(\rz^n) \st \mathrm{supp}\,u\subset \rz^{n-1}\times(-\infty,0]\Big\},$$
 then any element of $B^{d,r,\nu}(\rz^n_+;\rz\times\Sigma)$ induces
 pointwise $($i.e., for any fixed value of $(\tau,\mu))$  mappings
\begin{equation}\label{eq:Sobmap}
  H^s_{p}(\rz^n_+)\lra H^{s-d}_p(\rz^n_+),\qquad s>r-1+\frac{1}{p}.
\end{equation}
If we let $B^{d,r}(\rz^n_+)$ be the Fr\'echet space of operators not depending on the parameters $\tau,\mu$
$($which is obtained as above by eliminating everywhere the parameters$)$, we have
 $$B^{d,r,\nu}(\rz^n_+;\rz\times\Sigma)\hookrightarrow
     C^\infty(\rz\times\Sigma)\,\wh{\otimes}_\pi\,B^{d,r}(\rz^n_+);$$
in particular, whenever $s>r-1+\frac{1}{p}$,
\begin{equation}\label{eq:smooth123}
 B^{d,r,\nu}(\rz^n_+;\rz\times\Sigma)\hookrightarrow
 C^\infty\big(\rz\times\Sigma,\scrL(H^s_{p}(\rz^n_+),H^{s-d}_p(\rz^n_+))\big).
\end{equation}

\section{$\scrR$-boundedness of families from Boutet de Monvel's calculus}
\label{sec:rbounded_boutet}

Due to \eqref{eq:Sobmap}, operators of non-positive order and type zero induce families of continuous operators in
$L_p$-spaces. We are now going to analyze the $\scrR$-boundedness of these families.
First we consider strongly parameter-dependent Green operators. They can be treated using their particular symbol
kernel structure exhibited in Proposition \ref{prop:kernel}.

\begin{theorem}\label{thm:green1}
Let $d\le 0$ and {$1<p<\infty$}. Then
\begin{equation*}\label{eq:A}
 G^{d,0}(\rz^n_+;\rz\times\Sigma)
 \hookrightarrow
  S^{d}_\scrR(\rz\times\Sigma; L_p(\rz^n_+),L_p(\rz^n_+))
\end{equation*}
$($where the latter space is defined as in Corollary $\mathrm{\ref{cor:S-scrR}}$, replacing $\rz^\ell$ by
$\rz\times\Sigma)$.
\end{theorem}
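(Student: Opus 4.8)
The plan is to reduce the statement to an application of Theorem~\ref{thm:intop} on integral operators, using the kernel representation of Proposition~\ref{prop:kernel}. First I would fix a Green operator $G(\tau,\mu)=\op(k)(\tau,\mu)\in G^{d,0}(\rz^n_+;\rz\times\Sigma)$ with $d\le0$ and reformulate what must be shown: for every multi-index $\alpha=(\alpha_\tau,\alpha_\mu)$ in the variables $(\tau,\mu)\in\rz\times\Sigma$, the operator family
$$
 T_\alpha := \big\{\spk{\tau,\mu}^{-d+|\alpha|}D^\alpha_{(\tau,\mu)}\op(k)(\tau,\mu) \st (\tau,\mu)\in\rz\times\Sigma\big\}
$$
is $\scrR$-bounded in $\scrL(L_p(\rz^n_+))$; moreover, by Corollary~\ref{cor:S-scrR} and its Fr\'echet-space formulation in \cite{denk-krainer07}, the $\scrR$-bounds should be estimable by the defining semi-norms of $k$ in $R^d_\const$ (plus the classical symbol semi-norms in the $x'$-variable), which gives the claimed continuous embedding. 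Since differentiating in $(\tau,\mu)$ and multiplying by the weight only changes $k$ to another strongly parameter-dependent symbol kernel of order $d-|\alpha|+|\alpha|=d$ (using that the classes $R^{d}_\const$ absorb both operations, up to the weight factors), it suffices to prove the case $\alpha=0$, i.e.\ that $\{\spk{\tau,\mu}^{-d}\op(k)(\tau,\mu)\}$ is $\scrR$-bounded, uniformly in terms of finitely many semi-norms of $k$.

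Next I would write $\op(k)(\tau,\mu)$, for fixed $(\tau,\mu)$, explicitly as an operator on $L_p(\rz^n_+)=L_p(\rz^{n-1}_{x'}\times\rz_{+,x_n})$. Using Proposition~\ref{prop:kernel}, substitute $k(x',\xi',\tau,\mu;x_n,y_n)=\wt k(x',\xi',\tau,\mu;\spk{\xi',\tau,\mu}x_n,\spk{\xi',\tau,\mu}y_n)$ with $\wt k\in S^0_\cl(\rz^{n-1}_{x'};S^{d+1}(\rz^{n-1}_{\xi'}\times\rz_\tau\times\Sigma_\mu;\scrS(\rz_+\times\rz_+)))$. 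From \eqref{eq:kernel4} the operator acts in the $x_n$-direction as an integral operator against the kernel
$$
 \kappa_{(\tau,\mu)}(x',\xi';x_n,y_n)=\spk{\xi',\tau,\mu}\,\wt k(x',\xi',\tau,\mu;\spk{\xi',\tau,\mu}x_n,\spk{\xi',\tau,\mu}y_n),
$$
and in the $x'$-direction as a pseudodifferential operator with $\scrL(L_p(\rz_+))$-valued symbol $x'\mapsto\kappa_{(\tau,\mu)}(x',\xi';\cdot,\cdot)$. The idea is now a two-step bootstrap. Step one: treat the normal ($x_n$) direction. For each fixed $(x',\xi',\tau,\mu)$, view $\kappa_{(\tau,\mu)}(x',\xi';\cdot,\cdot)$ as an integral operator on $L_p(\rz_{+,x_n})$; rescaling $s_n=\spk{\xi',\tau,\mu}x_n$ shows it is a dilation of the fixed integral operator with Schwartz kernel $\wt k(\cdots;s_n,t_n)$, hence its $L_p(\rz_+)$-operator norm is bounded (using $\|{\cdot}\|_{\scrS(\rz_+\times\rz_+)}\le C\|\cdot\|_{L^2(\rz_+\times\rz_+)}$-type estimates and Schur's test, noting the scaling is $L_p$-norm-preserving) by $C\spk{\xi',\tau,\mu}^{d}$ times a semi-norm of $\wt k$; one likewise controls all $\xi'$-derivatives, getting that $\spk{\tau,\mu}^{-d}\kappa_{(\tau,\mu)}(x',\cdot;\cdot,\cdot)$ lies, uniformly, in a bounded subset of $S^0_\scrR(\rz^{n-1}_{\xi'};L_p(\rz_+),L_p(\rz_+))$ — indeed since these are operators into a single Hilbert-type scaling they will even be norm-bounded, but what we need is $\scrR$-boundedness of the family over $(\tau,\mu)$, which follows because norm-bounded sets of operators between the \emph{same} space are not automatically $\scrR$-bounded, so here I would instead pass the $(\tau,\mu)$-dependence through via Theorem~\ref{thm:girardi}/Corollary~\ref{cor:S-scrR} after handling $\xi'$.

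So step two: apply Corollary~\ref{cor:S-scrR} in the tangential variables. The family $\{\spk{\tau,\mu}^{-d}\op(k)(\tau,\mu)\}$ consists of pseudodifferential operators $\op_{x'}$ with symbols $a_{(\tau,\mu)}(x',\xi')\in\scrL(L_p(\rz_+))$ that, by step one and the classical $S^0_\cl$-behaviour in $x'$, belong to a bounded subset of $S^0(\rz^{n-1}_{x'};S^0_\scrR(\rz^{n-1}_{\xi'};L_p(\rz_+),L_p(\rz_+)))$; since $L_p(\rz_+)$ has $(\mathcal H\mathcal T)$ and $(\alpha)$, Corollary~\ref{cor:S-scrR} (applied with an extra parameter, exactly as in \cite{denk-krainer07}; the compact-$x'$-support subtlety is handled as in Remark~\ref{rem:12345} — the classical-symbol structure in $x'$ lets one localise) yields continuity of $\op_{x'}$ into $\scrL(L_p(\rz^{n-1},L_p(\rz_+)))=\scrL(L_p(\rz^n_+))$, hence a \emph{bound} on each $\op(k)(\tau,\mu)$; and then, crucially, running the whole argument with the parameters $(\tau,\mu)$ retained as frozen covariables of a symbol in $S^0_\scrR(\rz_\tau\times\Sigma_\mu;\ldots)$ — i.e.\ checking the weighted $(\tau,\mu)$-derivative estimates, which come directly from the $R^d_\const$ estimates — gives the $\scrR$-boundedness of the family over $(\tau,\mu)$, which is precisely the assertion $G^{d,0}\hookrightarrow S^d_\scrR(\rz\times\Sigma;L_p,L_p)$.

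The main obstacle, and the place where real work is needed, is making the two-step reduction uniform and $\scrR$-bounded rather than merely norm-bounded: one must arrange the estimates so that the $(\tau,\mu)$-variable is genuinely treated as a Fourier-multiplier covariable of an operator-valued symbol of order $d$ (so that Corollary~\ref{cor:S-scrR} applies and outputs $\scrR$-boundedness), while simultaneously the $\xi'$-variable is also such a covariable and the $x_n$-direction is an honest $L_p(\rz_+)$-integral operator. Getting the scaling bookkeeping right — that the substitution $x_n\mapsto\spk{\xi',\tau,\mu}^{-1}x_n$ is $L_p$-isometric up to a constant and converts the $\spk{\xi',\tau,\mu}^{d+1}$ growth of $\wt k$ plus the prefactor into exactly $\spk{\xi',\tau,\mu}^{d}$, with all $\xi',\tau,\mu$-derivatives producing the correct negative powers demanded by the $S^0_\scrR$ and $S^d_\scrR$ conditions — is laborious but routine once the structure of Proposition~\ref{prop:kernel} is in hand. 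I would also need the elementary fact that bounded subsets of $\scrL(H)$ for a Hilbert space $H$ are $\scrR$-bounded, which lets the normal-direction integral operators be absorbed harmlessly.
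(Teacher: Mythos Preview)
Your overall architecture---treat the $x_n$-direction as a family of integral operators on $L_p(\rz_+)$, then apply an operator-valued Mikhlin theorem in the $\xi'$-variable to pass to $L_p(\rz^n_+)$---is exactly the paper's strategy. However, the execution of the key step, namely the $\scrR$-boundedness of the normal-direction family $\{\mathfrak g(\xi',\tau,\mu)\}\subset\scrL(L_p(\rz_+))$, contains a genuine gap. You propose to obtain it via ``the elementary fact that bounded subsets of $\scrL(H)$ for a Hilbert space $H$ are $\scrR$-bounded, which lets the normal-direction integral operators be absorbed harmlessly.'' But $L_p(\rz_+)$ is not a Hilbert space for $p\neq 2$, so this fails. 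Your alternative route---dilation plus Schur's test---yields only \emph{norm} bounds, and you yourself note that norm-boundedness does not imply $\scrR$-boundedness; the sentence where you try to rescue this by ``passing the $(\tau,\mu)$-dependence through via Theorem~\ref{thm:girardi}'' is circular, since that theorem already \emph{requires} the $\scrR$-boundedness of the symbol values as a hypothesis. The paper resolves this precisely via the Theorem~\ref{thm:intop} route you announce in your first sentence but never actually carry out: from Proposition~\ref{prop:kernel} one gets the explicit pointwise domination
\[
  |k(\xi',\tau,\mu;x_n,y_n)|\le C\,\spk{\xi',\tau,\mu}^{d+1}\big(\spk{\xi',\tau,\mu}(x_n+y_n)\big)^{-1}\le C\,\spk{\tau,\mu}^{d}\,\frac{1}{x_n+y_n},
\]
and since the scalar kernel $k_0(x_n,y_n)=(x_n+y_n)^{-1}$ defines a bounded operator on $L_p(\rz_+)$ (Hilbert's inequality), Theorem~\ref{thm:intop} delivers the required $\scrR$-bound directly. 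This single estimate is the heart of the proof; everything else is bookkeeping.

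A secondary issue is your handling of the $x'$-dependence. Corollary~\ref{cor:S-scrR} and Theorem~\ref{thm:girardi} are stated for $x'$-independent symbols, so ``localising via Remark~\ref{rem:12345}'' is not a proof. The paper instead first treats the constant-coefficient case $G^{d,0}_{\const}$ completely, obtains continuity of the embedding there via the closed graph theorem, and then passes to general $x'$-dependence by the projective tensor-product identity $G^{d,0}=S^0_\cl(\rz^{n-1}_{x'})\,\wh\otimes_\pi\,G^{d,0}_{\const}$: writing $G=\sum M_{f_j}G_j$ and using that multiplication by $f_j\in S^0_\cl$ has operator norm $\|f_j\|_\infty$ gives the semi-norm estimates for free. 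This tensor-product reduction is both cleaner and avoids the need for any $x'$-dependent Mikhlin theorem.
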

\begin{proof}
For convenience we shall use the short-hand notations $G^{d,0}$, $G^{d,0}_{\mathrm{const}}$, and $S^d_\scrR$.

\textbf{Step 1.}
We first consider operators with symbol kernel independent of $x^\prime$. Let $G\in G^{d,0}_{\mathrm{const}}$
have symbol kernel $k$. Define $\mathfrak{g}(\xi^\prime,\tau,\mu):L_p(\rz_+)\to L_p(\rz_+)$ by
\begin{equation}\label{eq:symb}
 [\mathfrak{g}(\xi^\prime,\tau,\mu) u](x_n)=\int_0^\infty k(\xi^\prime,\tau,\mu;x_n,y_n)u(y_n)\,dy_n.
\end{equation}
Then $G(\tau,\mu)$ can be understood as the Fourier multiplier with symbol $\mathfrak{g}(\cdot,\tau,\mu)$.
In view of Theorem \ref{thm:girardi} it suffices to show that
\begin{equation*}
 \Big\{\spk{\tau,\mu}^{-d+k+|\gamma|}\spk{\xi^\prime}^{|\alpha|}
 D^\alpha_{\xi^\prime} D^k_\tau D^\gamma_\mu\mathfrak{g}(\xi^\prime,\tau,\mu)\st
 (\xi^\prime,\tau,\mu)\in\rz^{n-1}\times\rz\times\Sigma\Big\}
\end{equation*}
is an $\scrR$-bounded subset of $\scrL(L_p(\rz_+))$. Since
 $$\spk{\tau,\mu}^{-d+k+|\gamma|}\spk{\xi^\prime}^{|\alpha|}
   \le\spk{\xi^\prime,\tau,\mu}^{-d+|\alpha|+k+|\gamma|},$$
this follows with Kahane's contraction principle\footnote{This principle states that the inequality
 $$\sum_{z_1,\ldots,z_N=\pm1}\Big\|\sum_{j=1}^N z_j\alpha_jx_j\Big\|^q\le 2^q
     \sum_{z_1,\ldots,z_N=\pm1}\Big\|\sum_{j=1}^N z_j\beta_jx_j\Big\|^q$$
holds true whenever $\alpha_j,\beta_j\in\cz$ with $|\alpha_j|\le|\beta_j|$ and $x_1,\ldots,x_N\in X$ with arbitrary $N$.}
if we show the $\scrR$-boundedness of
\begin{equation}\label{eq:B2}
 \Big\{\spk{\xi^\prime,\tau,\mu}^{-d+|\alpha|+k+|\gamma|}D^\alpha_{\xi^\prime}
 D^k_\tau D^\gamma_\mu\mathfrak{g}(\xi^\prime,\tau,\mu)\st
 (\xi^\prime,\tau,\mu)\in\rz^{n-1}\times\rz\times\Sigma\Big\}.
\end{equation}
Since $\spk{\xi^\prime,\tau,\mu}^{-d+|\alpha|+k+|\gamma|}D^\alpha_\xi D^k_\tau D^\gamma_\mu \mathfrak{g}$
is a finite linear combination of symbols like $\mathfrak{g}$ we may assume without loss of generality that
$|\alpha|=k=|\gamma|=0$. Then we can estimate
\begin{align}\label{eq:12345}
\begin{split}
 \big|k(\xi^\prime,\tau,\mu;x_n,y_n)\big|
 &=\big|\wt{k}(\xi^\prime,\tau,\mu;\spk{\xi^\prime,\tau,\mu}x_n,
        \spk{\xi^\prime,\tau,\mu}y_n)\big|\\
 &\le C\,\spk{\xi^\prime,\tau,\mu}^{d+1}
        \big(\spk{\xi^\prime,\tau,\mu}(x_n+y_n)\big)^{-1}\\
 &\le C\,\spk{\tau,\mu}^d\frac{1}{x_n+y_n},
\end{split}
\end{align}
since $\wt{k}$ behaves like a symbol of order $d+1$ in $(\xi^\prime,\tau,\mu)$ and is rapidly decreasing in $(s_n,t_n)$.
Now the $\scrR$-boundedness of \eqref{eq:B2} follows from Theorem \ref{thm:intop}.

Since  $G^{d,0}_{\mathrm{const}}$ is continuously embedded in $\scrC(\rz\times\Sigma; \scrL(L_p(\rz^n_+))$,
{cf.\ \eqref{eq:smooth123}}, the closed graph theorem implies the continuity of the embedding into $ S^{d}_\scrR$.

\textbf{Step 2.} Due to Step 1, $G^{d,0}_{\mathrm{const}}\hookrightarrow S^d_\scrR$. In other words,
for any semi-norm $p(\cdot)$ of $S^d_\scrR$ there exists a semi-norm $q(\cdot)$ of
$G^{d,0}_{\mathrm{const}}$ such that $p(G)\le q(G)$ for any $G\in G^{d,0}_{\mathrm{const}}$.

For a function $f\in S^0_\cl:= S^0_\cl(\rz^{n-1}_{x^\prime})$ let $M_f$ denote the operator of multiplication,
$M_f\in \scrL(L_p(\rz^n_+))$. By \eqref{eq:rd} and \eqref{eq:tensor} we have the identification
$G^{d,0}=S^0_\cl\,\wh{\otimes}_\pi\,G^{d,0}_{\mathrm{const}}$. Now let
 $$G=\sum_{j=1}^N M_{f_j}G_j,\qquad f_j\in S^0_\cl,\quad G_j\in G^{d,0}_{\mathrm{const}}.$$
Then $G$ belongs to $S^d_\scrR$ and, with $p(\cdot)$ and $q(\cdot)$ as above,
 $$p(G)\le \sum_{j=1}^N p\big(M_{f_j}G_j\big)=\sum_{j=1}^N \|f_j\|_\infty p(G_j)
     \le\sum_{j=1}^N \|f_j\|_\infty q(G_j),$$
where $\|\cdot\|_\infty$ is the supremum-norm. By passing to the infimum over all possibilities to represent $G$
as such a linear combination we get
 $$p(G)\le \inf\Big\{\sum_{j=1}^N \|f_j\|_\infty q(G_j)\big)\stBig G=\sum_{j=1}^N M_{f_j}G_j\Big\}
     =:\wh{q}(G).$$
However, $\wh{q}(\cdot)$ induces a continuous semi-norm on the projective tensor product
$S^0_\cl\,\wh{\otimes}_\pi\,G^{d,0}_{\mathrm{const}}$, cf.\ the discussion after \eqref{eq:tensor}.
Since $p(\cdot)$ was arbitrary, we conclude that
$S^0_\cl\,\wh{\otimes}_\pi\,G^{d,0}_{\mathrm{const}}\hookrightarrow S^d_\scrR$.
\qed
\end{proof}

It seems that the direct proof of Theorem \ref{thm:green1} does not generalize to the case of weakly
parameter-dependent Green operators;
{
for example, estimate \eqref{eq:12345} in the weak case is only valid in case of regularity $\nu\ge1$. In fact,
for a function $f\in\scrS(\rz_+\times\rz_+)$, the identity
 $$|f(x_n,y_n)|^2=\int_{x_n}^\infty\int_{y_n}^\infty\partial_u\partial_v\big(f(u,v)\overline{f(u,v)}\big)\,dudv$$
gives the estimate
 $$|f(x_n,y_n)|^2\le 2\|f\|_{{L_2}}\|D_{x_n}D_{y_n} f\|_{L_2}+2\|D_{x_n}f\|_{L_2}\|D_{y_n} f\|_{L_2},$$
where the norm is that of $L_2(\rz_+\times\rz_+)$. Combining this with the estimates of \eqref{eq:kernel5},
for a symbol kernel $k\in R^{d,\nu}_\const(\rz^{n-1}\times\rz\times\Sigma)$, $1/2\le\nu<1$, we obtain only
 $$\big|k(\xi^\prime,\tau,\mu;x_n,y_n)\big|
     \le C\,\spk{\mu}^{\frac{\nu-1}{2}}\spk{\xi^\prime,\tau,\mu}^{d+\frac{1-\nu}{2}}\frac{1}{x_n+y_n}$$
which is weaker than the estimate \eqref{eq:12345}.
}

Thus we proceed differently, combining results of {Grubb, Kokholm}
\cite{grubb-kokholm93} on mapping properties of Green operators in weighted $L_2$-spaces
and the stability of $\scrR$-boundedness under interpolation. To this end, we shall make use of the spaces
\begin{equation*}
 L^\delta_2(\rz_+)={L_2}(\rz_+,t^{2\delta}dt),\qquad \delta\in\rz
\end{equation*}
and the following embeddings ({Theorem 1.9 of Grubb, Kokholm}  \cite{grubb-kokholm93}).

\begin{theorem}\label{thm:int}
Let $p\ge 2$ be given. Then, for any choice of $0<\delta^\prime<\frac{1}{2}-\frac{1}{p}<\delta<1$,
 $$\big(H^{\delta^\prime}_2(\rz_+),H^{\delta}_2(\rz_+)\big)_{\theta,p}
   \hookrightarrow
   L_p(\rz_+)
   \hookrightarrow
   \big(L_2^{-\delta^\prime}(\rz_+),L_2^{-\delta}(\rz_+)\big)_{\theta,p}$$
where $\theta$ is chosen such that
$\theta\delta+(1-\theta)\delta^\prime=\frac{1}{2}-\frac{1}{p}$ and $(\cdot,\cdot)_{\theta,p}$ refers to
real interpolation.
\end{theorem}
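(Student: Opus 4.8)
The plan is to establish the two embeddings of Theorem~\ref{thm:int} by exhibiting $L_p(\rz_+)$, for $p\ge 2$, as an intermediate space sandwiched between appropriate real interpolation spaces of the Bessel-potential scale on one side and the weighted $L_2$-scale on the other. The guiding principle is that $L_p(\rz_+)$ with $p\ge 2$ sits \emph{below} $L_2$ in a scale of decreasing integrability but, near the boundary $t=0$, one can trade smoothness (or weight) against the loss in integrability via a Hardy-type inequality. The number $\delta=\frac12-\frac1p$ is exactly the amount of smoothness one must invest, which is why the interpolation parameter $\theta$ is pinned down by $\theta\delta+(1-\theta)\delta'=\frac12-\frac1p$.

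First I would recall the elementary one-dimensional Sobolev/Hardy estimates on $\rz_+$: for $0<s<\frac12$ there is a continuous embedding $H^s_2(\rz_+)\hookrightarrow L_{q}(\rz_+)$ with $\frac1q=\frac12-s$ (this is the fractional Sobolev embedding in dimension one, valid precisely in the subcritical range $s<1/2$; note $H^s_2=\{u|_{\rz_+}:u\in H^s_2(\rz)\}$ and for $s<1/2$ there is no distinction between this and the closure of $C_c^\infty(\rz_+)$). Applying this with $s=\delta'$ and with $s=\delta$ gives $H^{\delta'}_2(\rz_+)\hookrightarrow L_{p_0}(\rz_+)$ and $H^{\delta}_2(\rz_+)\hookrightarrow L_{p_1}(\rz_+)$ for the corresponding exponents $p_0,p_1$ with $\frac1{p_i}=\frac12-s_i$. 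Since real interpolation is a functor, $\big(H^{\delta'}_2,H^{\delta}_2\big)_{\theta,p}\hookrightarrow \big(L_{p_0},L_{p_1}\big)_{\theta,p}$, and by the classical identification of real interpolation spaces of $L_q$-spaces, $\big(L_{p_0},L_{p_1}\big)_{\theta,p}=L_p(\rz_+)$ precisely when $\frac1p=\frac{1-\theta}{p_0}+\frac{\theta}{p_1}=\frac12-\big((1-\theta)\delta'+\theta\delta\big)$, i.e.\ when $(1-\theta)\delta'+\theta\delta=\frac12-\frac1p$ — which is the stated choice of $\theta$. (One should check that $\delta'<\frac12-\frac1p<\delta$ guarantees $\theta\in(0,1)$ and that $p_0<p<p_1$, so the Lorentz space $L_{p,p}$ in the reiteration equals $L_p$.) This yields the left-hand embedding.

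For the right-hand embedding I would dualize, or argue directly with a Hardy inequality. The direct route: for $\delta>0$ one has the Hardy-type bound $\|t^{-\delta}v\|_{L_2(\rz_+)}\le C_\delta\|v\|_{H^\delta_2(\rz_+)}$ for $v$ vanishing suitably at the origin — equivalently $H^\delta_2(\rz_+)\hookrightarrow L_2^{\,-\delta}(\rz_+)$ when $0<\delta<1/2$, and by interpolation with the trivial embedding $L_2=L_2^0$ one gets, for $p\ge2$, $L_p(\rz_+)\hookrightarrow L_2^{\,-\sigma}(\rz_+)$ with $\sigma=\frac12-\frac1p$. Then real interpolation of the two weighted $L_2$-spaces gives $\big(L_2^{-\delta'},L_2^{-\delta}\big)_{\theta,p}$, and since these are $L_2$-spaces with respect to the measures $t^{-2\delta'}dt$ and $t^{-2\delta}dt$, the same interpolation identity for $L_q(\mu)$-spaces (now varying the measure rather than the exponent) shows this space contains $L_2(\rz_+,t^{-2\sigma}dt)=L_2^{-\sigma}(\rz_+)\supset L_p(\rz_+)$ whenever $(1-\theta)\delta'+\theta\delta=\sigma=\frac12-\frac1p$, i.e.\ the same $\theta$. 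Alternatively, one obtains the right embedding from the left by applying the duality theorem for real interpolation together with the facts that $\big(L_p(\rz_+)\big)'=L_{p'}(\rz_+)$ with $p'\le2$ and that $\big(H^s_2(\rz_+)\big)'$ is (a realization of) a weighted $L_2$-space in this range of $s$; this keeps the two halves of the statement symmetric.

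The main obstacle is bookkeeping at the endpoints and making the interpolation identities precise in the non-reflexive/boundary regime: one must be careful about which realization of $H^s_2(\rz_+)$ is used (restriction versus closure of test functions), that the Sobolev and Hardy embeddings are used strictly in the subcritical range $s<1/2$ so that no boundary conditions intervene and no logarithmic losses occur, and that the identification $\big(L_{p_0}(\mu),L_{p_1}(\mu)\big)_{\theta,p}=L_p(\mu)$ (with $L_{p,p}=L_p$) genuinely applies to the finite-measure-at-infinity-but-singular-at-zero measures $t^{2\delta}dt$. None of these is deep, but each is the kind of step where the hypotheses $p\ge2$, $0<\delta'<\frac12-\frac1p<\delta<1$ are actually consumed; verifying them is exactly how one checks the theorem.
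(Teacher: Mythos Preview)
The paper does not give its own proof of this statement; it is quoted as Theorem~1.9 of Grubb--Kokholm~\cite{grubb-kokholm93}. So there is nothing in the paper to compare your argument against, and I comment only on correctness.

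Your left-hand embedding via the one-dimensional Sobolev embedding $H^{s}_2\hookrightarrow L_{q}$, $\frac1q=\frac12-s$, followed by $(L_{p_0},L_{p_1})_{\theta,p}=L_p$, is sound for $\delta<\frac12$; for $\delta\in[\frac12,1)$ the exponent $p_1$ becomes formal, but the gap is easily closed by first identifying $(H^{\delta'}_2,H^{\delta}_2)_{\theta,p}$ with the Besov space $B^{\sigma}_{2,p}(\rz_+)$, $\sigma=\frac12-\frac1p$, and then invoking the Jawerth--Franke embedding $B^{\sigma}_{2,p}\hookrightarrow L_p$. The right-hand embedding, however, contains a genuine error. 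Your intermediate claim $L_p(\rz_+)\hookrightarrow L_2^{-\sigma}(\rz_+)$ with $\sigma=\frac12-\frac1p$ is false: this is the scale-invariant endpoint and is ruled out by a lacunary example --- for $f=\sum_{k=1}^N 2^{-k/p}\chi_{(2^k,2^{k+1})}$ one computes $\|f\|_{L_p}\asymp N^{1/p}$ while $\|f\|_{L_2^{-\sigma}}\asymp N^{1/2}$, so the ratio blows up like $N^{1/2-1/p}$. The companion claim that $(L_2^{-\delta'},L_2^{-\delta})_{\theta,p}$ ``contains $L_2^{-\sigma}$ by the same interpolation identity, now varying the measure'' is also wrong: real interpolation of $L_2$-spaces with \emph{different weights} and second index $p\neq2$ does not return the intermediate-weight $L_2$-space. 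It is precisely the second index $p$ that makes $(L_2^{-\delta'},L_2^{-\delta})_{\theta,p}$ strictly larger than $L_2^{-\sigma}$, just large enough to absorb $L_p$; your duality sketch fails for the same reason, since $(H^s_2(\rz_+))'$ is a negative-order Sobolev space, not a weighted $L_2$-space. A correct argument has to work directly with the $K$-functional for the pair $(L_2^{-\delta'},L_2^{-\delta})$ (which is explicit for power weights on $\rz_+$) or with the corresponding weighted Lorentz-space description.
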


Moreover, let us introduce the Fr\'{e}chet space $S^d_{\scrR,w}(\rz\times\Sigma;X,Y)$ of smooth functions
$a:\rz\times\Sigma\to\scrL(X,Y)$ for which the sets
 $$T_{k,\gamma}(a):=\big\{\spk{\tau,\mu}^{-d}\spk{\tau}^{k}\spk{\mu}^{|\gamma|}
   D^k_\tau D^\gamma_\mu a(\tau,\mu)\st (\tau,\mu)\in\rz\times\Sigma\big\}$$
are $\scrR$-bounded for any choice of $k$ and $\gamma$. The semi-norms are defined as the $\scrR$-bounds
of the sets $T_{k,\gamma}$.

\begin{theorem}\label{thm:green2}
If $d\le 0$, $\nu\ge 1/2$ and {$1<p<\infty$} then
 $$G^{d,0;\nu}(\rz^n_+;\rz\times\Sigma)
     \hookrightarrow
     S^d_{\scrR,w}(\rz\times\Sigma;L_p(\rz^n_+),L_p(\rz^n_+)).$$
\end{theorem}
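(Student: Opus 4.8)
\textbf{Proof proposal for Theorem \ref{thm:green2}.}

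The plan is to imitate the structure of the proof of Theorem \ref{thm:green1}: first handle the case of symbol kernels independent of $x^\prime$, then pass to general Green operators via a tensor-product argument. For the constant-coefficient case, the key new ingredient is that we can no longer bound the kernel pointwise well enough to apply Theorem \ref{thm:intop} in $L_p$ directly; instead we work in the Hilbert space $L_2(\rz_+)$, where $\scrR$-boundedness coincides with norm-boundedness, and use the weighted spaces together with Theorem \ref{thm:int} to transfer back to $L_p$. Concretely, let $G\in G^{d,0;\nu}_{\mathrm{const}}$ have symbol kernel $k$ and let $\mathfrak{g}(\xi^\prime,\tau,\mu)$ be the associated normal-direction operator as in \eqref{eq:symb}. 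As in Step 1 of Theorem \ref{thm:green1} it suffices, by the Girardi--Weis theorem applied in the tangential variables and by Kahane's contraction principle, to establish the $\scrR$-boundedness of the family $\spk{\xi^\prime,\tau,\mu}^{-d+|\alpha|+k+|\gamma|}D^\alpha_{\xi^\prime}D^k_\tau D^\gamma_\mu\mathfrak{g}(\xi^\prime,\tau,\mu)$ in $\scrL(L_p(\rz_+))$, uniformly in $(\xi^\prime,\tau,\mu)$ --- but now only after redistributing the powers of $\spk{\tau,\mu}$, $\spk{\tau}$, $\spk{\mu}$ in the way dictated by $S^d_{\scrR,w}$; again one reduces to $|\alpha|=k=|\gamma|=0$.

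For this reduced family, the first step is to read off from \eqref{eq:kernel5} (with all the tangential derivative orders equal to zero) the $L_2(\rz_+\times\rz_+)$-bounds for $k$ and for $x_n D_{x_n}^0 k$, $y_n D_{y_n}^0 k$, $x_n y_n k$, etc., so as to control $\mathfrak{g}$ not only as an operator $L_2(\rz_+)\to L_2(\rz_+)$ but as an operator between the weighted spaces $L^{\pm\delta}_2(\rz_+)$ and the Sobolev spaces $H^{\delta}_2(\rz_+)$ for the relevant small $\delta,\delta^\prime$. This is exactly the point at which the regularity assumption $\nu\ge1/2$ enters: the gain of negative powers of $\varrho=\spk{\mu}\spk{\xi^\prime,\tau,\mu}^{-1}$ in \eqref{eq:kernel5} must be at least $\spk{\mu}^{-1/2}\spk{\xi^\prime,\tau,\mu}^{1/2}$-worth to compensate the loss of half a derivative incurred when moving from $L_p$ to $L_2$ on the half-line (compare the displayed estimate for $1/2\le\nu<1$ in the discussion following Theorem \ref{thm:green1}). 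The point of working in the scale of weighted $L_2$-spaces is that these mapping estimates are available there by Grubb--Kokholm, and that $L_2$-operator-norm bounds are automatically $\scrR$-bounds. One then obtains, for $j=0,1$ say (i.e.\ on each of the two endpoint spaces in Theorem \ref{thm:int}), $\scrR$-boundedness of the rescaled family $\spk{\xi^\prime,\tau,\mu}^{-d}\mathfrak{g}$ as operators $(H^{\delta^\prime}_2,H^{\delta}_2)\to(H^{\delta^\prime}_2,H^{\delta}_2)$ and likewise on the weighted side, with explicit $\scrR$-bounds controlled by semi-norms of $k$. Interpolating via Theorem \ref{thm:inter} (legitimate since $L_2$-based Sobolev and weighted spaces are $B$-convex) and sandwiching with the embeddings of Theorem \ref{thm:int} yields the $\scrR$-boundedness of $\spk{\xi^\prime,\tau,\mu}^{-d}\mathfrak{g}(\xi^\prime,\tau,\mu)$ in $\scrL(L_p(\rz_+))$ for $p\ge2$; the case $1<p<2$ follows by the duality Theorem \ref{thm:dual}, using that adjoints of Green operators of order $d\le0$ and type $0$ are again such operators (Theorem \ref{thm:adjoint}) and that $\mathfrak{g}(\xi^\prime,\tau,\mu)^\prime$ is, up to the obvious change of variables, the normal-direction operator of the adjoint kernel. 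Finally, the closed graph theorem (using the continuous embedding \eqref{eq:smooth123} into $\scrC(\rz\times\Sigma;\scrL(L_p))$) upgrades the set-theoretic inclusion $G^{d,0;\nu}_{\mathrm{const}}\hookrightarrow S^d_{\scrR,w}$ to a continuous one.

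The passage from the constant-coefficient class to the full class $G^{d,0;\nu}(\rz^n_+;\rz\times\Sigma)$ is then verbatim Step 2 of the proof of Theorem \ref{thm:green1}: using the identification $G^{d,0;\nu}=S^0_\cl(\rz^{n-1}_{x^\prime})\,\wh{\otimes}_\pi\,G^{d,0;\nu}_{\mathrm{const}}$ (this is the content of \eqref{eq:rd} together with \eqref{eq:tensor}), one writes a general element as a finite sum $\sum_j M_{f_j}G_j$ with $f_j\in S^0_\cl$ and $G_j\in G^{d,0;\nu}_{\mathrm{const}}$, bounds each semi-norm of $S^d_{\scrR,w}$ of $M_{f_j}G_j$ by $\|f_j\|_\infty$ times the corresponding semi-norm of $G_j$ (the operator of multiplication by $f_j$ commutes with the normal-direction structure and contributes only its sup-norm), and passes to the infimum over representations to recover a continuous tensor-product semi-norm. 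Handling the type-zero case suffices since operators of positive type are, by \eqref{eq:sum}, compositions of type-zero Green operators with powers of $D_{x_n}$, but in the statement only type $r=0$ appears anyway. The main obstacle I expect is the bookkeeping in the first bullet of the constant-coefficient step: one must verify that the estimates \eqref{eq:kernel5} with $\nu\ge1/2$ really do deliver uniform bounds for $\mathfrak{g}(\xi^\prime,\tau,\mu)$ as a map between the precise pair of endpoint spaces appearing in Theorem \ref{thm:int} --- including the behaviour of the $\xi^\prime$-derivatives, which, via the $\spk{\xi^\prime}$-weights allowed by Girardi--Weis, must be matched against the $\varrho$-gains --- and then to track through the interpolation and duality steps that the resulting constants are dominated by finitely many semi-norms of $k$, so that the closed graph argument applies. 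Everything else is a routine transcription of the strongly parameter-dependent case.
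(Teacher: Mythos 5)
Your overall strategy coincides with the paper's: reduce to constant coefficients by the tensor-product argument of Step 2 of Theorem \ref{thm:green1}, view the Green operator as a Fourier multiplier and apply Girardi--Weis, establish the required $\scrR$-bounds by working in the Hilbert-space scale $L_2^{-\eps}(\rz_+)\to H^\eps_2(\rz_+)$ (where $\scrR$-boundedness is just norm-boundedness), transfer to $L_p(\rz_+)$ for $p\ge2$ by sandwiching with Theorem \ref{thm:int} and interpolating via Theorem \ref{thm:inter}, and handle $1<p<2$ by duality through Theorems \ref{thm:dual} and \ref{thm:adjoint}, finishing with the closed graph theorem. One practical difference: the paper does not rederive the endpoint mapping property from \eqref{eq:kernel5}; it cites Theorem 4.1(5) (estimate (4.15)) of Grubb--Kokholm, which states exactly that the relevant family is bounded in $\scrL(L_2^{-\eps}(\rz_+),H^\eps_2(\rz_+))$ for $0<\eps<\frac12$. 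What you flag as the ``main obstacle'' --- extracting these weighted estimates directly from \eqref{eq:kernel5} --- is precisely the work the paper outsources to that reference.

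There is, however, one step that fails as stated: the claimed reduction to $|\alpha|=k=|\gamma|=0$. In the weakly parameter-dependent case only the $\mu$-derivatives can be absorbed, since by \eqref{eq:kernel5} a $\mu$-derivative costs a full factor $\spk{\xi^\prime,\tau,\mu}^{-1}$ without changing the exponent of $\varrho$, so $\spk{\xi^\prime,\tau,\mu}^{|\gamma|}D^\gamma_\mu\mathfrak{g}$ again has the structure of a symbol kernel of order $d$ and regularity $\nu$. By contrast, each $\xi^\prime$- or $\tau$-derivative lowers the regularity index by one (the factor $\varrho^{\nu-|\alpha^\prime|-k}$ in \eqref{eq:kernel5}), so $\spk{\xi^\prime,\tau,\mu}^{|\alpha|+k}D^\alpha_{\xi^\prime}D^k_\tau\mathfrak{g}$ is \emph{not} of the same structure as $\mathfrak{g}$ once $|\alpha|+k>\nu$, and the full reduction is unavailable. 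This is exactly why the target space is $S^d_{\scrR,w}$, with the weaker weight $\spk{\tau}^{k}\spk{\mu}^{|\gamma|}$ instead of $\spk{\tau,\mu}^{k+|\gamma|}$: the paper reduces only $\gamma$ to zero and then must prove $\scrR$-boundedness of the whole family $M_{\alpha,k}=\{\spk{\xi^\prime,\tau,\mu}^{-d}\spk{\xi^\prime,\tau}^{k+|\alpha|}D^\alpha_{\xi^\prime}D^k_\tau\mathfrak{g}\}$, which is precisely what the Grubb--Kokholm estimate delivers. Your closing remark about matching $\xi^\prime$-derivatives against $\varrho$-gains shows you sensed this; with the reduction corrected accordingly, your argument goes through as in the paper.
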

\begin{proof}
By Theorem \ref{thm:dual} and Theorem \ref{thm:adjoint} we may assume that $p\ge2$.
Using a tensor product argument as in the second step of the proof of Theorem \ref{thm:green1} reduces the proof
to showing that $G^{d,0;\nu}_\const \hookrightarrow S^d_{\scrR,w}$.

Thus let $G\in G^{d,0;\nu}_\const$.We have to show that
 $$\big\{\spk{\tau,\mu}^{-d}\spk{\tau}^{k}\spk{\mu}^{|\gamma|}
   D^k_\tau D^\gamma_\mu G(\tau,\mu)\st
   (\tau,\mu)\in\rz\times\Sigma\big\}$$
is an $\scrR$-bounded subset of $\scrL(L_p(\rz^n_+))$.
To this end represent $G$ as a Fourier multiplier with symbol $\mathfrak{g}(\xi^\prime,\tau,\mu)$ as done in the
proof of  Theorem \ref{thm:green1}. Due to Theorem \ref{thm:girardi} it suffices to show that
\begin{equation*}
 \Big\{\spk{\tau,\mu}^{-d}\spk{\tau}^{k}\spk{\mu}^{|\gamma|}
 \spk{\xi^\prime}^{|\alpha|}
 D^\alpha_{\xi^\prime}D^k_\tau D^\gamma_\mu
 \mathfrak{g}(\xi^\prime,\tau,\mu)
 \st (\xi^\prime,\tau,\mu)\in\rz^{n-1}\times\rz\times\Sigma\Big\}.
\end{equation*}
is an $\scrR$-bounded subset of $\scrL(L_p(\rz_+))$. Observing that
 $$\spk{\tau,\mu}^{-d}\spk{\tau}^{k}\spk{\mu}^{|\gamma|}
   \spk{\xi^\prime}^{|\alpha|}
   \le \spk{\xi^\prime,\tau,\mu}^{-d+|\gamma|}
   \spk{\xi^\prime,\tau}^{k+|\alpha|},$$
that $\spk{\xi^\prime,\tau,\mu}^{|\gamma|}D^\gamma_\mu\mathfrak{g}$ has the same structure as $\mathfrak{g}$,
and using Kahane's contraction principle, we may assume $\gamma=0$ and show that
\begin{equation*}
 M_{\alpha,k}:=
 \Big\{\spk{\xi^\prime,\tau,\mu}^{-d}\spk{\xi^\prime,\tau}^{k+|\alpha|}
 D^\alpha_{\xi^\prime}D^k_\tau \mathfrak{g}(\xi^\prime,\tau,\mu)
 \st (\xi^\prime,\tau,\mu)\in\rz^{n-1}\times\rz\times\Sigma\Big\}
\end{equation*}
is an $\scrR$-bounded subset of $\scrL(L_p(\rz_+))$. We know from Theorem 4.1.(5) of  {Grubb, Kokholm } \cite{grubb-kokholm93}
(see actually (4.15) in its proof) that for any $0<\eps<\frac{1}{2}$
\begin{equation*}
 M_{\alpha,k}\subset \scrL(L_2^{-\eps}(\rz_+),H^\eps_2(\rz_+))
\end{equation*}
is a bounded set. Since the involved spaces are Hilbert spaces, boundedness coincides with $\scrR$-boundedness.
Then using Theorem \ref{thm:int} $($with $\eps=\delta$ and $\eps=\delta^\prime$ where
$0<\delta^\prime<\frac{1}{2}-\frac{1}{p}<\delta<\frac{1}{2}$, respectively$)$ and Theorem \ref{thm:inter} we
obtain the $\scrR$-boundedness of $M_{\alpha,k}$ in $\scrL(L_p(\rz_+))$.

Since from {Grubb, Kokholm}  \cite{grubb-kokholm93} we know that the norm-bound of $M_{\alpha,k}$ can be estimated in terms of
semi-norms of $G$, an application of the closed graph theorem yields the continuity of the embedding.
\qed
\end{proof}

Finally, let us consider a family of pseudodifferential operators
 $$A_+(\tau,\mu)=\op_+(a)(\tau,\mu):
     L_p(\rz^n_+) \lra L_p(\rz^n_+)$$
with a symbol $a\in S^{d}(\rz^n\times\rz^n\times\rz\times\Sigma)$ with $d\le0$, cf.\  {\eqref{eq:Sd}}. Since we consider
the operator between $L_p$-spaces only $($and not between Sobolev spaces of higher regularity$)$ it is now not
necesssary to require the transmission property for $a$. We will show that
  $$A_+\in S^{d}_\scrR(\rz\times\Sigma; L_p(\rz^n_+),L_p(\rz^n_+)).$$
Since $\op_+(a)=r_+\op(a)e_+$ with the continuous operators $e_+:L_p(\rz^n_+)\to L_p(\rz^n)$ and
$r_+:L_p(\rz^n)\to L_p(\rz^n_+)$ of extension and restriction, respectively, it suffices to show that
  $$\op(a)\in S^{d}_\scrR(\rz\times\Sigma; L_p(\rz^n),L_p(\rz^n)).$$
Again by a tensor product argument analogous to that of Step 2 in the proof of Theorem \ref{thm:green1},
we can assume that $a$ has constant coefficients, i.e., $a\in S^{d}_\const$. However, then the statement follows
immediately from Theorem \ref{thm:girardi}, choosing there $X=Y=\cz$. Thus we can conclude:

\begin{theorem}\label{thm:main1}
Let $d\le 0$, $\nu\ge 1/2$, and $1<p<\infty$. Then
\begin{align*}
 B^{d,0}(\rz^n_+;\rz\times\Sigma)
 &\hookrightarrow
 S^{d}_\scrR(\rz\times\Sigma; L_p(\rz^n_+),L_p(\rz^n_+)),\\
 B^{d,0;\nu}(\rz^n_+;\rz\times\Sigma)
 &\hookrightarrow
 S^{d}_{\scrR,w}(\rz\times\Sigma; L_p(\rz^n_+),L_p(\rz^n_+)).
\end{align*}
Recalling the definition of the spaces $ S^{d}_{\scrR}$ and $ S^{d}_{\scrR,w}$ this means that if
$P(\tau,\mu)\in B^{d,0}(\rz^n_+;\rz\times\Sigma)$ and
$Q(\tau,\mu)\in B^{d,0;\nu}(\rz^n_+;\rz\times\Sigma)$ then
\begin{align*}
 &\big\{\spk{\tau,\mu}^{-d+k+|\gamma|} D^k_\tau D^\gamma_\mu P(\tau,\mu)\st
     (\tau,\mu)\in\rz\times\Sigma\big\},\\
 &\big\{\spk{\tau,\mu}^{-d}\spk{\tau}^{k}\spk{\mu}^{|\gamma|}D^k_\tau D^\gamma_\mu Q(\tau,\mu)\st
     (\tau,\mu)\in\rz\times\Sigma\big\}
\end{align*}
are $\scrR$-bounded subsets of $\scrL(L_p(\rz^n_+))$.
\end{theorem}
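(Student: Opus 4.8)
The theorem is obtained by assembling the results just established. The plan is to split an element of $B^{d,0}(\rz^n_+;\rz\times\Sigma)$ into its pseudodifferential and Green parts and to add the corresponding $\scrR$-bounds. So I would fix $P(\tau,\mu)\in B^{d,0}(\rz^n_+;\rz\times\Sigma)$ and write $P(\tau,\mu)=\op_+(a)(\tau,\mu)+G(\tau,\mu)$ with a symbol $a\in S^d(\rz^n\times\rz^n\times\rz\times\Sigma)$ and a strongly parameter-dependent Green operator $G\in G^{d,0}(\rz^n_+;\rz\times\Sigma)$; recall that the transmission condition on $a$ is immaterial for $L_p$-continuity, so $a$ may be taken from the full class \eqref{eq:Sd}. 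By the discussion immediately preceding the statement, $\op_+(a)\in S^d_\scrR(\rz\times\Sigma;L_p(\rz^n_+),L_p(\rz^n_+))$, and by Theorem \ref{thm:green1} the same holds for $G$. Since $\scrR(S+T)\le\scrR(S)+\scrR(T)$ for $\scrR$-bounded families (Section \ref{sec:rbounded}), applying this to the sets $T_\alpha$, with $\alpha$ ranging over all multi-indices in the $(\tau,\mu)$-variables, yields $P\in S^d_\scrR$. Continuity of the embedding $B^{d,0}\hookrightarrow S^d_\scrR$ then follows from the closed graph theorem together with \eqref{eq:smooth123}, which guarantees a continuous embedding of $B^{d,0}$ into $\scrC(\rz\times\Sigma,\scrL(L_p(\rz^n_+)))$; equivalently, one may invoke the non-direct-sum structure of $B^{d,0}$, so that it suffices to check continuity on each of the two defining summands, which is precisely the content of Theorem \ref{thm:green1} and the preceding discussion.

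For the second embedding the only additional ingredient is the elementary inclusion $S^d_\scrR(\rz\times\Sigma;X,Y)\hookrightarrow S^d_{\scrR,w}(\rz\times\Sigma;X,Y)$. Indeed $\spk{\tau}\le\spk{\tau,\mu}$ and $\spk{\mu}\le\spk{\tau,\mu}$, hence $\spk{\tau,\mu}^{-d}\spk{\tau}^k\spk{\mu}^{|\gamma|}\le\spk{\tau,\mu}^{-d+k+|\gamma|}$, so Kahane's contraction principle converts the $\scrR$-boundedness of the defining sets of $S^d_\scrR$ into that of the sets $T_{k,\gamma}$ defining $S^d_{\scrR,w}$. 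Now I would write $Q(\tau,\mu)\in B^{d,0;\nu}(\rz^n_+;\rz\times\Sigma)$ as $\op_+(a)(\tau,\mu)+G(\tau,\mu)$ with $a$ as above and $G\in G^{d,0;\nu}(\rz^n_+;\rz\times\Sigma)$. By Theorem \ref{thm:green2} the Green part lies in $S^d_{\scrR,w}$, while $\op_+(a)\in S^d_\scrR\hookrightarrow S^d_{\scrR,w}$; adding the two families gives $Q\in S^d_{\scrR,w}$, and continuity follows once more from the closed graph theorem, or from the non-direct-sum structure.

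The closing reformulation in the statement is then just the definition of $S^d_\scrR$ and $S^d_{\scrR,w}$ with parameter space $\rz\times\Sigma$ written out: $P\in S^d_\scrR$ says exactly that $\{\spk{\tau,\mu}^{-d+k+|\gamma|}D^k_\tau D^\gamma_\mu P(\tau,\mu)\st(\tau,\mu)\in\rz\times\Sigma\}$ is $\scrR$-bounded for all $k$ and $\gamma$, and $Q\in S^d_{\scrR,w}$ says the same with weight $\spk{\tau,\mu}^{-d}\spk{\tau}^{k}\spk{\mu}^{|\gamma|}$. Since the substantive work has already been carried out in Theorems \ref{thm:green1} and \ref{thm:green2} and in the pseudodifferential discussion, there is no real obstacle here; the only points requiring attention are the bookkeeping imposed by the non-direct-sum topologies of $B^{d,0}$ and $B^{d,0;\nu}$ and the (trivial) inclusion $S^d_\scrR\hookrightarrow S^d_{\scrR,w}$.
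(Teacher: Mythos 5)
Your proposal is correct and matches the paper's argument: the paper likewise obtains Theorem \ref{thm:main1} by decomposing an element of $B^{d,0}$ (resp.\ $B^{d,0;\nu}$) into its pseudodifferential part, handled by the reduction $\op_+(a)=r_+\op(a)e_+$, the tensor-product argument and Theorem \ref{thm:girardi}, and its Green part, handled by Theorem \ref{thm:green1} (resp.\ Theorem \ref{thm:green2}), and then summing the $\scrR$-bounds; the inclusion $S^d_\scrR\hookrightarrow S^d_{\scrR,w}$ via Kahane's contraction principle is exactly the observation needed for the pseudodifferential part in the weak case. Your explicit handling of the continuity of the embeddings via the closed graph theorem and the non-direct-sum topology is a correct and slightly more careful account of what the paper leaves implicit.
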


\begin{corollary}\label{cor:4.5}
Let $P(\tau,\mu)\in B^{d,0}(\rz^n_+;\rz\times\Sigma)$ or $P(\tau,\mu)\in B^{d,0,\nu}(\rz^n_+;\rz\times \Sigma)$
with $d\le 0$, $\nu\ge 1/2$, and $p\in (1,\infty)$. Define
$(\op_\tau P)(\mu) := \scrF^{-1}_{\tau\to r} P(\tau,\mu)\scrF_{r\to \tau}$. Then we have
  \[ \op_\tau P\in S_{\scrR}^d\big(\Sigma; L_p(\rz^{n+1}_+, L_p(\rz^{n+1}_+))\big).\]
\end{corollary}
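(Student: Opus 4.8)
\textbf{Proof plan for Corollary \ref{cor:4.5}.}
The plan is to regard $(\op_\tau P)(\mu)$ as a pseudodifferential operator in the tangential variable $r\in\rz$ (more precisely, a Fourier multiplier in $r$) whose symbol is the operator family $\tau\mapsto P(\tau,\mu)$ acting on $L_p(\rz^n_+)$, and then to invoke the Girardi--Weis multiplier theorem (Theorem \ref{thm:girardi}) together with the $\scrR$-bounds already established in Theorem \ref{thm:main1}. Concretely, fix $\mu\in\Sigma$ and set $X=Y=L_p(\rz^n_+)$; since $1<p<\infty$, $X$ has properties $(\mathcal{H}\mathcal{T})$ and $(\alpha)$, so the hypotheses of Theorem \ref{thm:girardi} are met. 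The symbol of $\op_\tau P$ in the sense of Section \ref{sec:rbounded} is $\tau\mapsto P(\tau,\mu)\in\scrL(X,Y)$, and by \eqref{eq:smooth123} it is smooth in $\tau$.

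The key steps, in order: (1) By Theorem \ref{thm:main1}, in the strong case the set $\{\spk{\tau,\mu}^{-d+k}D^k_\tau P(\tau,\mu)\}$ is $\scrR$-bounded in $\scrL(L_p(\rz^n_+))$ for each $k$, with $\scrR$-bound controlled by a seminorm of $P$; in particular $\{\tau^j D^j_\tau\,[\spk{\mu}^{-d}P(\tau,\mu)]\st\tau\in\rz\}$ is $\scrR$-bounded for $j\in\{0,1\}$ after multiplying by the harmless scalar factor $\spk{\mu}^{-d}\spk{\tau,\mu}^{d}\spk{\tau}^{-j}\le 1$ (use Kahane's contraction principle). (2) Apply Theorem \ref{thm:girardi} with $\ell=1$ to the $\rz_r$-variable: it yields that $\op_\tau\big(\spk{\mu}^{-d}P(\cdot,\mu)\big)$ is bounded on $L_p(\rz_r,L_p(\rz^n_+))=L_p(\rz^{n+1}_+)$ with operator norm $\le C$ times the $\scrR$-bound from step (1), uniformly in $\mu$; this already gives the estimate defining the seminorm $T_0$ of $S^d_\scrR$. (3) For the higher $T_\alpha$-seminorms one must differentiate in $\mu$: $D^\gamma_\mu(\op_\tau P)(\mu)=\op_\tau(D^\gamma_\mu P)(\mu)$, and $D^\gamma_\mu P(\tau,\mu)\in B^{d-|\gamma|,0}(\rz^n_+;\rz\times\Sigma)$ (differentiation in the parameter lowers the order; this is immediate from the symbol estimates), so step (2) applied to $D^\gamma_\mu P$ gives $\scrR$-boundedness of $\{\spk{\mu}^{-d+|\gamma|}D^\gamma_\mu(\op_\tau P)(\mu)\st\mu\in\Sigma\}$, which is exactly the $T_\gamma$-seminorm of $S^d_\scrR(\Sigma;L_p(\rz^{n+1}_+),L_p(\rz^{n+1}_+))$. (4) Finally, $\scrR$-boundedness (not just uniform boundedness) of the families in steps (2)--(3) follows because Theorem \ref{thm:girardi} outputs an $\scrR$-bounded set of operators whenever the input symbols lie in a fixed $\scrR$-bounded set $T$; applying it with $T$ the $\scrR$-bounded set of derivative families from Theorem \ref{thm:main1} (indexed now also by $\mu$) delivers an $\scrR$-bounded family in $\scrL(L_p(\rz^{n+1}_+))$, with $\scrR$-bound $\le C\scrR(T)$. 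The weak case is identical, using the second embedding of Theorem \ref{thm:main1} and the corresponding space $S^d_{\scrR,w}$, except that there the factor $\spk{\tau}^j$ from the $\tau$-derivatives must be absorbed using $\spk{\tau}^{k}\spk{\mu}^{|\gamma|}\spk{\tau,\mu}^{-d}$ as in Theorem \ref{thm:main1}; since the statement of the corollary only claims membership in $S^d_\scrR(\Sigma;\cdot)$ with respect to the $\mu$-variable, the weak $\tau$-dependence is not an obstruction.

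I expect the main obstacle to be purely bookkeeping: checking that $\op_\tau$ commutes with $D^\gamma_\mu$ on the relevant spaces and that $D^\gamma_\mu$ indeed maps $B^{d,0}(\rz^n_+;\rz\times\Sigma)$ continuously into $B^{d-|\gamma|,0}(\rz^n_+;\rz\times\Sigma)$ (so that the iterated application of Theorem \ref{thm:girardi} produces the correct weight $\spk{\mu}^{-d+|\gamma|}$), and that the Girardi--Weis theorem is being applied with the variable and the Banach spaces in the right roles — here the "base" variable is $\tau$ (dual to $r$) and the "fiber" is all of $L_p(\rz^n_+)$, so that the resulting operator lives on $L_p(\rz_r,L_p(\rz^n_+))\cong L_p(\rz^{n+1}_+)$. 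No genuinely new estimate is needed beyond Theorem \ref{thm:main1}; the content is entirely in organising the multiplier argument and tracking the $\scrR$-bounds through the two nested applications of Theorem \ref{thm:girardi}.
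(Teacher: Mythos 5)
Your plan is essentially the paper's own proof: both reduce, via the Girardi--Weis theorem applied in the $\tau$-variable with $X=Y=L_p(\rz^n_+)$, to the $\scrR$-boundedness of $\big\{\spk{\mu}^{-d+|\gamma|}\tau^k D^k_\tau D^\gamma_\mu P(\tau,\mu)\big\}$ for $k=0,1$, which follows from Theorem \ref{thm:main1} and Kahane's contraction principle using $\spk{\mu}^{-d+|\gamma|}|\tau|^k\le\spk{\tau,\mu}^{-d+k+|\gamma|}$ (resp.\ $\le\spk{\tau,\mu}^{-d}\spk{\tau}^k\spk{\mu}^{|\gamma|}$ in the weak case). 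The only cosmetic slip is the contraction factor in your step (1), which should be $\spk{\mu}^{-d}\spk{\tau,\mu}^{d}\big(\tau/\spk{\tau,\mu}\big)^{j}$ rather than $\spk{\mu}^{-d}\spk{\tau,\mu}^{d}\spk{\tau}^{-j}$; the conclusion is unaffected.
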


\begin{proof}
{
Due to Theorem~\ref{thm:girardi}, we have to show, for $k=0,1$ and all $\gamma\in\nz_0^n$, the $\scrR$-boundedness
of the set
  \[ \big\{ \spk{\mu}^{-d+|\gamma|}D_\mu^\gamma \tau^k D_\tau^k P(\tau,\mu)\;|\; \tau\in\rz\setminus\{0\},
  \mu\in\Sigma\big\}.\]
In both cases, this follows from Kahane's inequality and Theorem~\ref{thm:main1}, since
  $\spk\mu^{-d+|\gamma|}\tau^k \le \spk{\tau,\mu}^{-d+k+|\gamma|}$ and
  $\spk{\tau,\mu}^{-d}\spk{\tau}^k\spk\mu^{|\gamma|} \le \spk{\tau,\mu}^{-d+k+|\gamma|}$. \qed
}
\end{proof}

In applications, the complex parameter $\mu$ is related to the spectral parameter $\lambda$ appearing in the
resolvent of the $L_p$-realization of a non-local boundary value problem. We included a second parameter
$\tau\in\rz$ in order to be able to treat additional parameters arizing from the problem itself, e.g., in the form of a
covariable in the unbounded direction of a waveguide. In this case, Corollary~$\mathrm{\ref{cor:4.5}}$
leads to maximal $L_p$-regularity by an application of the Theorem of Weis~\cite{weis01}.

\section{Maximal $L_p$-regularity for non-local {boundary value problems} in a wave-guide}\label{sec:stokes}

We will study non-local boundary value problems in a wave-guide, i.e., on a cylinder $\rz\times M$ whose
cross-section
is a smooth compact manifold $M$ with boundary $\partial M$. For this, we need to provide
some material on Boutet de Monvel's calculus on manifolds and the corresponding concept of parameter-ellipticity.
We follow {Grubb} \cite{grubb86} and {Grubb, Kokholm} \cite{grubb-kokholm93}. As an application, we study the reduced Stokes problem
in a waveguide in Section~\ref{sec:5.2}.

\subsection{Manifolds with boundary and parameter-ellipticity}\label{sec:manifolds}

In this section we indicate  how the calculus can be modified to cover domains with smooth boundary and how
it is used to describe solution operators of certain non-local boundary value problems. In the sequel we let $M$
denote a smooth compact manifold with boundary.  In view of the formulation of parameter-ellipticity given below,
we need to descibe a refined subclass of the class of Green operators introduced in Section \ref{sec:2.2} as well
as to introduce another type of operators, the so-called Poisson operators.

\subsubsection{Polyhomogeneous Green operators}\label{sec:5.1}

Let $G(\tau,\mu)$ be a weakly parameter-de\-pen\-dent Green operator of order $d$, type 0, and regularity $\nu$
as described in Definition \ref{def:green1}. We call $G(\tau,\mu)$ polyhomogeneous or classical if there exists a
sequence of Green operators $G_{d-j}(\tau,\mu)$, $j\in\nz_0$, such that,
for any $N\in\nz_0$,
 $$G(\tau,\mu)-\sum_{j=0}^{N-1} G_{d-j}(\tau,\mu)\;\in\; G^{d-N,0,\nu-N}(\rz^n_+;\rz\times\Sigma),$$
and if $k_{d-j}$ are the symbol kernels associated with $G_{d-j}$ as in \eqref{eq:kernel4} and \eqref{eq:kernel5}
$($with $d$ replaced by $d-j)$ it holds
\begin{equation}\label{eq:hom}
 k_{d-j}(x^\prime,t\xi^\prime,t\tau,t\mu;x_n/t,y_n/t)=t^{d-j}k_{d-j}(x^\prime,\xi^\prime,\tau,\mu;x_n,y_n)
\end{equation}
whenever $t\ge 1$ and ${|(\xi^\prime,\tau)|\ge 1}$. Extension by homogeneity allows us to associate
with $k_{d-j}$ a symbol kernel $k_{d-j}^h$ defined for ${(\xi^\prime,\tau)\not=0}$ and satisfying
\eqref{eq:hom} whenever $t>0$ and
${(\xi^\prime,\tau)\not=0}$. With this symbol kernel we associate an operator-valued function
$\mathfrak{g}^h_{d-j}(x^\prime,\xi^\prime,\tau,\mu)$, ${(\xi^\prime,\tau)\not=0}$, as in \eqref{eq:symb}.
The component of highest degree, $\mathfrak{g}^h_d$, is called the principal boundary symbol of $G$.

If $G$ is strongly para\-me\-ter-dependent, the previous definitions are slightly modified, asking the equality
in \eqref{eq:hom} to hold whenever $t\ge 1$ and $|(\xi^\prime,\tau,\mu)|\ge 1$. Then all
$\mathfrak{g}^h_{d-j}(x^\prime,\xi^\prime,\tau,\mu)$ are defined for $(\xi^\prime,\tau,\mu)\not=0$.
We denote the resulting classes by $G^{d,0,\nu}_{\cl}(\rz^n_+;\rz\times\Sigma)$ and
$G^{d,0}_{\cl}(\rz^n_+;\rz\times\Sigma)$, respectively.

Forming finite sums as in \eqref{eq:sum} yields operators of type $r\in\nz$.
In this case the principal boundary symbol is
given by
\begin{equation*}
 \mathfrak{g}^h_{d}(x^\prime,\xi^\prime,\tau,\mu)=\mathfrak{g}^h_{0,d}(x^\prime,\xi^\prime,\tau,\mu)
+\sum_{j=1}^{r} \mathfrak{g}^h_{j,d-j}(x^\prime,\xi^\prime,\tau,\mu)D_{x_n}^j.
\end{equation*}

\begin{definition}\label{def:globalgreen}
A weakly parameter-dependent negligible Green operator $C$ of type $r=0$ and regularity $\nu^\prime\in\rz$ on
$M$ is an integral-operator with kernel
 $$k(\tau,\mu;x,x^\prime)\;\in\;\scrC^\infty(\rz\times\Sigma\times M\times M)$$
$($smoothness up to the boundary$)$ that satisfies estimates
 $$p\big(D^k_\tau D^\alpha_\mu k(\tau,\mu;\cdot,\cdot)\big)\le
     C_{p\alpha k N}\spk{\mu}^{\frac{1}{2}-\nu^\prime-|\alpha|}\spk{\tau}^{-N}$$
for any continuous semi-norm $p$ of $\scrC^\infty(M\times M)$, all orders of derivatives and all $N\in\nz$.
In case of strong parameter-dependence we ask that $k$  is rapidly decreasing in $(\tau,\mu)$,
 $$k(\tau,\mu;x,x^\prime)\;\in\;\scrS\big(\rz\times\Sigma,\scrC^\infty(M\times M)\big).$$
Negligible operators of general type $r\in\nz$ are of the form
 $$C(\tau,\mu)=\sum_{j=0}^{r} C_{j}(\tau,\mu)D^j,$$
where the $C_j$ are negligible of type $0$ and regularity $\nu^\prime$ and $D$ denotes a first order differential
operator on $M$ which in a collar neighborhood of the boundary coincides with the derivative in normal direction.
\end{definition}

{Using a covering of $M$ with local coordinate systems and a subordinate partition of unity},
we can now define the classes of (global) parameter-dependent Green operators
$G^{d,r,\nu}_{\cl}(M;\rz\times\Sigma)$ and $G^{d,r}_{\cl}(M;\rz\times\Sigma)$, using the correponding classes
on the half-space and the
negligible operators of the previous definition, where in case of finite regularity $\nu$ the negligible remainders
are required to have regularity $\nu^\prime=\nu-d$.
With any such operator we can associate a principal boundary symbol, using the local principal boundary symbols,
which is defined on
$(T^*\partial M\setminus\{0\})\times\rz\times\Sigma$ in case of weak parameter-dependence and on
$(T^*\partial M\times\rz\times\Sigma)\setminus\{0\}$
in case of strong parameter-dependence. Here, $T^*\partial M$ denotes the cotangent bundle of $\partial M$.

\subsubsection{Poisson Operators}\label{sec:5.2.b}

Parameter-dependent Poisson operators on the half-space are of the form
\begin{equation*}
 [K(\tau,\mu)u](x)=\int e^{ix^\prime\xi^\prime}
 k(x^\prime,\xi^\prime,\tau,\mu;x_n)\wh{u}(\xi^\prime)\,\dbar\xi^\prime
\end{equation*}
where $u(x^\prime)$ is defined on the boundary of $\rz^n_+$ and the symbol kernel has a specific structure.
Poisson operators have an order $d$ and a regularity $\nu$, but there is no type {involved}.
The mentioned structure of a Poisson operator of order $d$ and finite or infinite regularity $\nu$ is obtained by
repeating all the constructions
of Section \ref{sec:2.2} concerning Green operators of type $r=0$ and regularity $\nu$ by simply eliminating the
$y_n$-variable and replacing $d$ by
$d-1/2$. Such a Poisson operator induces $($pointwise, for each $(\tau,\mu))$ continuous maps
 $$B^{s+d-1/p}_{pp}(\rz^{n-1})\lra H^{s}_p({\rz^n_+}),\qquad s\in\rz.$$
To obtain polyhomogeneous Poisson operators one needs to repeat the construction of the previous
Section \ref{sec:5.1}, again cancelling the $y_n$-variable.

Again these constructions can be generalized to the case of a manifold $M$, using
{local coordinate systems and} a partition of unity {as well as}
an analogue of Definition \ref{def:globalgreen}, replacing $\scrC^\infty(M\times M)$ by
$\scrC^\infty(M\times\partial M)$.
The resulting classes we shall denote by $P^{d,\nu}_{\cl}(M;\rz\times\Sigma)$ and $P^{d}_{\cl}(M;\rz\times\Sigma)$,
respectively.

\subsubsection{Parameter-elliptic boundary value problems}\label{sec:5.3}

Let
 $$A(\tau,\mu)=\sum_{j+k+\ell+|\alpha|\le 2}a_{jk\ell\alpha}(x^\prime,x_n)\tau^k\mu^\ell D^\alpha_{x^\prime}D^j_{x_n},
     \qquad (\tau,\mu)\in\rz\times\Sigma,$$
be a parameter-dependent differential operator on the half-space $\rz^n_+$ with coefficients that are
smooth up to the boundary. We associate with $A(\tau,\mu)$ two principal symbols, the usual homogeneous
principal symbol
 $$\sum_{j+k+\ell+|\alpha|= 2}a_{j\alpha}(x^\prime,x_n)\tau^k\mu^\ell{\xi^\prime}^\alpha_{x^\prime}\xi_n^j,
     \qquad (\xi,\tau,\mu)\not=0,$$
and the principal boundary symbol
 $$\mathfrak{a}^h_2(x^\prime,\xi^\prime,\tau,\mu)=
     \sum_{j+k+\ell+|\alpha|=2}a_{j\alpha}(x^\prime,0)\tau^k\mu^\ell{\xi^\prime}^\alpha_{x^\prime}D^j_{x_n},
     \qquad (\xi^\prime,\tau,\mu)\not=0.$$
We call $A$ $($interior$)$ parameter-elliptic if its usual homogeneous principal symbol is pointwise invertible.
Similar constructions make sense on
the manifold $M$, leading to principal symbols on $(T^*M\times\rz\times\Sigma)\setminus\{0\}$ and on
$(T^*\partial M\times\rz\times\Sigma)\setminus\{0\}$, respectively.

The following theorem is a very special version of results due to Grubb. We have chosen to only state this
special version, since it suffices for our application to the reduced Stokes problem in the next section and
since in this way we can keep the exposition shorter. In fact, one may admit more general classes of
pseudodifferential operators $A(\tau,\mu)$ of arbitrary order acting between vector bundles as well
as more general boundary conditions. For details we refer to {Grubb} \cite{grubb86} and {Grubb, Kokholm} \cite{grubb-kokholm93}.

\begin{theorem}\label{thm:grubb}
Let $A(\tau,\mu)$ be a second order parameter-dependent differential operator on $M$,
$G(\tau,\mu)\in G^{2,r,\nu}_{\cl}(M;\rz\times\Sigma)$ a weakly parameter-dependent polyhomogeneous
Green operator of type $r\le 2$ and regularity $\nu\ge 1/2$. Let $\gamma_0$ and $\gamma_1$ denote
Dirichlet and Neumann boundary conditions on $M$, respectively. The boundary value problem
\begin{align}\label{eq:bvp}
   \begin{pmatrix}A(\tau,\mu)+G(\tau,\mu) \\ \gamma_j\end{pmatrix}:\;
    H^{s}_p(M)\lra
   \begin{matrix} H^{s-2}_p(M)\\ \oplus \\ B^{s-j-1/p}_{pp}(\partial M)\end{matrix},\qquad s>1+1/p,
\end{align}
is called parameter-elliptic if $A(\tau,\mu)$ is interior parameter-elliptic and, whenever $\xi^\prime\not=0$,
the initial value problem
\begin{align*}
\begin{split}
   (\mathfrak{a}^h_2(x^\prime,\xi^\prime,\tau,\mu)+\mathfrak{g}^h_2(x^\prime,\xi^\prime,\tau,\mu))u
  &=0\qquad \text{on }\rz_+,\\
  (1-j)u(0)+ju^\prime (0)&=0
\end{split}
\end{align*}
has only the trivial solution $u=0$ in $\scrS(\rz_+)$.
In this case \eqref{eq:bvp} is an isomorphism for $|(\tau,\mu)|$
sufficiently large, and
\begin{align*}
   \begin{pmatrix}A(\tau,\mu)+G(\tau,\mu) \\ \gamma_j\end{pmatrix}^{-1}=
   \begin{pmatrix}P_j(\tau,\mu) & K_j(\tau,\mu)\end{pmatrix},
\end{align*}
with an operator $P_j(\tau,\mu)\in B^{-2,0,\nu}(M;\rz\times\Sigma)$ as described in Section
$\mathrm{\ref{sec:2.3}}$\footnote{The operator class on $M$ instead of the half-space is again obtained by
using a {covering by local coordinate systems and a subordinate}
partition of unity and taking into account the global smoothing remainders defined in
Definition \ref{def:globalgreen}.} and a Poisson operator $K_j(\tau,\mu)\in P^{-j,\nu}_{\cl}(M;\rz\times\Sigma)$.
\end{theorem}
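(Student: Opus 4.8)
The statement is essentially Grubb's ellipticity-implies-invertibility theorem for the parameter-dependent Boutet de Monvel calculus, so the strategy is the standard parametrix construction on the level of boundary symbols, globalized via a partition of unity, followed by a Neumann series argument to upgrade the parametrix to a genuine inverse for large $|(\tau,\mu)|$. First I would verify that interior parameter-ellipticity together with the Shapiro--Lopatinskii type condition (triviality of the model boundary value problem on $\rz_+$ for every $\xi'\neq 0$) implies that the principal boundary symbol
\[
 \begin{pmatrix}\mathfrak{a}^h_2(x^\prime,\xi^\prime,\tau,\mu)+\mathfrak{g}^h_2(x^\prime,\xi^\prime,\tau,\mu)\\ \gamma_j\end{pmatrix}
\]
is an isomorphism from $H^s_p(\rz_+)$ (fibered over $(T^*\partial M\times\rz\times\Sigma)\setminus\{0\}$, in the weakly parameter-dependent sense) onto $H^{s-2}_p(\rz_+)\oplus\cz$. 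Its pointwise inverse $\bigl(\mathfrak{p}^h_j\ \ \mathfrak{k}^h_j\bigr)$ then defines, by homogeneity and the symbol-kernel description (Proposition~\ref{prop:kernel} and Definition~\ref{def:green1}), the principal symbols of a would-be pseudodifferential-plus-Green operator $P_j^{(0)}(\tau,\mu)\in B^{-2,0,\nu}(M;\rz\times\Sigma)$ and a Poisson operator $K_j^{(0)}(\tau,\mu)\in P^{-j,\nu}_{\cl}(M;\rz\times\Sigma)$.

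Next I would run the asymptotic expansion. Composing $\bigl(P_j^{(0)}\ \ K_j^{(0)}\bigr)$ with $\bigl(A+G,\gamma_j\bigr)^t$ and using the composition rules of Theorem~\ref{thm:algebra} (together with the analogous composition and boundary-symbol rules for Poisson and trace operators, which are part of the calculus we are allowed to quote), one gets the identity operator plus a remainder of order $-1$ and regularity lowered by one; here the hypothesis $\nu\geq 1/2$ is exactly what guarantees the remainders stay in the allowed regularity range throughout the recursion and, after sufficiently many steps, land in the negligible class of Definition~\ref{def:globalgreen}. Summing the successive corrections asymptotically (using that the symbol classes are Fréchet and admit asymptotic summation) produces a two-sided parametrix $\bigl(P_j(\tau,\mu)\ \ K_j(\tau,\mu)\bigr)$ with $P_j\in B^{-2,0,\nu}(M;\rz\times\Sigma)$ and $K_j\in P^{-j,\nu}_{\cl}(M;\rz\times\Sigma)$ such that
\[
 \begin{pmatrix}A+G\\ \gamma_j\end{pmatrix}\begin{pmatrix}P_j & K_j\end{pmatrix}=I+R_1(\tau,\mu),\qquad
 \begin{pmatrix}P_j & K_j\end{pmatrix}\begin{pmatrix}A+G\\ \gamma_j\end{pmatrix}=I+R_2(\tau,\mu),
\]
with $R_1,R_2$ of regularity lowered by the full amount, hence negligible; by the decay estimates in Definition~\ref{def:globalgreen} their operator norms on the relevant Sobolev spaces tend to $0$ as $|(\tau,\mu)|\to\infty$. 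A Neumann series then makes $I+R_i$ invertible for $|(\tau,\mu)|$ large, and a standard argument shows the true inverse differs from the parametrix only by a negligible term, hence still has the asserted structure: $P_j(\tau,\mu)\in B^{-2,0,\nu}(M;\rz\times\Sigma)$ and $K_j(\tau,\mu)\in P^{-j,\nu}_{\cl}(M;\rz\times\Sigma)$.

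\textbf{Main obstacle.} The delicate point is the bookkeeping of \emph{regularity} under composition: unlike the order, which simply adds, the regularity behaves subtractively in a way that must be tracked carefully, and one must check that the Shapiro--Lopatinskii condition stated only at the principal level really does yield invertibility of the boundary symbol \emph{uniformly down to the non-principal terms} with controlled regularity — this is where the restriction $\nu\geq 1/2$ and the type bound $r\le 2$ enter essentially. Also, the Poisson-operator order $-j$ (rather than $-2$) for the $j$-th boundary condition has to be read off correctly from the mapping properties $B^{s+d-1/p}_{pp}(\partial M)\to H^s_p(\rz^n_+)$ recorded in Section~\ref{sec:5.2.b}, matching the order shift built into the Dirichlet ($j=0$) versus Neumann ($j=1$) trace. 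None of this is conceptually new — it is Grubb's theorem — so I would, as the authors themselves do for the calculus, cite \cite{grubb86} and \cite{grubb-kokholm93} for the parametrix construction and only indicate the role of the regularity hypothesis.
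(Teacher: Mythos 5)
The paper offers no proof of this theorem: it is stated explicitly as a very special case of results due to Grubb, with \cite{grubb86} and \cite{grubb-kokholm93} cited for the details — exactly the references you fall back on at the end of your proposal. Your sketch (inversion of the principal boundary symbol under the Shapiro--Lopatinskii condition, parametrix construction with the regularity bookkeeping, Neumann series for large $|(\tau,\mu)|$) is a faithful outline of how those references establish the result, so your proposal is correct and takes essentially the same approach as the paper.
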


\begin{corollary}
 \label{cor:5.3}
  In the situation of Theorem~$\mathrm{\ref{thm:grubb}}$, assume that $A(\tau,\mu) = \mu^2+\widetilde A(\tau)$
and that $G(\tau,\mu)=G(\tau)$ is independent of $\mu$. Let {$1<p<\infty$ and $T>0$ be finite}.
Define the operator $\mathbf A$ in $L_p(Z)$ with $Z:=\rz\times M$ by
  \begin{align*}
   \scrD(\mathbf A) & := \big\{ u\in W_p^2(Z)\,|\, \gamma_ju=0\;\text{on }\partial Z\big\},\\
   \mathbf A u &:= \op_\tau \widetilde A(\tau)u + \op_\tau G(\tau)u\quad (u\in\scrD(\mathbf A)).
   \end{align*}
   If the boundary value problem \eqref{eq:bvp} is parameter-elliptic in the sector
$\Sigma:= \{\mu\in\cz\setminus\{0\}: |\arg \mu|\le \frac\pi 4\}\cup\{0\}$, then $\mathbf A$ has maximal
$L_q$-regularity for every $q\in (1,\infty)$, i.e., the mapping
   \[ \partial_t+\mathbf A\colon
W_q^1\big((0,T);L_p(Z)\big)\cap L_q\big((0,T);\scrD(\mathbf A)\big)\to L_q\big((0,T); L_p(Z)\big)\]
   is an isomorphism of Banach spaces.
\end{corollary}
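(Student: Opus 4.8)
The plan is to apply the Theorem of Weis~\cite{weis01}, which characterises maximal $L_q$-regularity (for $T$ finite, so that we need not worry about invertibility at $t=0$) in terms of the $\scrR$-boundedness of the family $\{\lambda(\lambda+\mathbf A)^{-1}\st \lambda\in\Sigma_0\}$ for some sector $\Sigma_0$ around the positive real axis, together with the fact that $-\mathbf A$ generates an analytic semigroup. Since $L_p(Z)$ with $1<p<\infty$ is a UMD space, Weis' theorem is applicable. So the real work is to represent $(\mu^2+\mathbf A)^{-1}$ by operators from the calculus and to invoke Corollary~\ref{cor:4.5}.

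The key steps, in order, are as follows. First I would fix $\mu\in\Sigma$ (the sector of half-angle $\pi/4$) and consider, on the cross-section $M$, the parameter-elliptic boundary value problem \eqref{eq:bvp} with $A(\tau,\mu)=\mu^2+\wt A(\tau)$ and the Dirichlet (or Neumann) condition $\gamma_j$. By Theorem~\ref{thm:grubb} this is an isomorphism for $|(\tau,\mu)|$ large, with inverse $(P_j(\tau,\mu)\;\;K_j(\tau,\mu))$; since we impose the \emph{homogeneous} boundary condition $\gamma_j u=0$, only $P_j(\tau,\mu)\in B^{-2,0,\nu}(M;\rz\times\Sigma)$ is relevant, and it is exactly the pointwise resolvent $(\mu^2+\wt A(\tau)+G(\tau))^{-1}$ acting on $\{u\in H^2_p(M)\st\gamma_j u=0\}\subset L_p(M)\to L_p(M)$ for $|(\tau,\mu)|$ large. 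Second, I would pass from the cross-section to the cylinder $Z=\rz\times M$ by Fourier transform in the axial variable: with $\mathbf A=\op_\tau\wt A(\tau)+\op_\tau G(\tau)$, one has $(\mu^2+\mathbf A)^{-1}=\op_\tau\big(P_j(\tau,\mu)\big)$, at least for $|\mu|$ large so that the parameter-ellipticity threshold is met for all $\tau\in\rz$. Third, I would apply Corollary~\ref{cor:4.5} with $d=-2$ and $\nu\ge1/2$: it gives $\op_\tau P_j\in S^{-2}_\scrR(\Sigma;L_p(Z),L_p(Z))$, and in particular, taking $k=\gamma=0$ in the definition of $S^d_\scrR$, the family $\{\spk{\mu}^2(\mu^2+\mathbf A)^{-1}\st\mu\in\Sigma,\ |\mu|\ \text{large}\}$ is $\scrR$-bounded in $\scrL(L_p(Z))$. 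Reparametrising $\lambda=\mu^2$ (note that $\mu\mapsto\mu^2$ maps the sector of half-angle $\pi/4$ onto a sector of half-angle $\pi/2$, i.e.\ a right half-plane minus the origin, which contains a sector around $\rz_+$ of any angle $<\pi$), and using $\spk{\mu}^2\ge c(1+|\lambda|)$ for $|\lambda|$ large, yields $\scrR$-boundedness of $\{\lambda(\lambda+\mathbf A)^{-1}\st\lambda\in\Sigma_0,\ |\lambda|\ge R\}$ for a suitable large $R$ and a sector $\Sigma_0\supset\rz_+$; the finitely many remaining $\lambda$ with $|\lambda|<R$ contribute a norm-bounded — hence $\scrR$-bounded — set once one checks, as is standard, that $\mathbf A$ has empty spectrum in a sector around $\rz_+$ near $0$ as well (parameter-ellipticity gives sectoriality with vertex at the origin after the substitution, so actually $\Sigma_0$ can be taken to be a genuine sector and no exceptional region remains). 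Finally, analyticity of the semigroup generated by $-\mathbf A$ follows from the same resolvent estimate (the norm-bound is majorised by the $\scrR$-bound), so Weis' theorem applies and gives maximal $L_q$-regularity on $(0,T)$ for every $q\in(1,\infty)$.

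Two points deserve care and are where I expect the main obstacle. The first is purely bookkeeping but essential: Theorem~\ref{thm:grubb} only asserts invertibility for $|(\tau,\mu)|$ sufficiently large, whereas maximal regularity needs $(\lambda+\mathbf A)^{-1}$ to exist for \emph{all} $\lambda$ in a sector (or at least all $\lambda$ of large modulus together with a separate argument at the origin). One must therefore argue that, because $\mathbf A$ is an unbounded operator whose resolvent set contains a sector at infinity, the substitution $\lambda=\mu^2$ combined with the homogeneity built into parameter-ellipticity (the principal symbols are quasi-homogeneous in $(\xi,\tau,\mu)$) actually yields a \emph{sectorial} operator; concretely, one rescales and uses that the finitely-dimensional obstruction at bounded frequencies can be absorbed, or one simply notes that for $0$-order corrections the large-$\mu$ resolvent bound already forces the spectrum of $\mathbf A$ to lie outside a full sector around $\rz_+$. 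The second, more technical, point is verifying that $\op_\tau P_j$ is genuinely the resolvent of the operator $\mathbf A$ as \emph{defined} in the corollary, with domain $W^2_p(Z)\cap\{\gamma_j u=0\}$ — i.e.\ that the Fourier-multiplier-in-$\tau$ construction $\op_\tau(\cdot)$ commutes with the operations involved and that the domain of $\op_\tau(\wt A(\tau)+G(\tau))$ is exactly $\scrD(\mathbf A)$; this is where the mapping property \eqref{eq:Sobmap}/\eqref{eq:smooth123} on the cross-section, lifted to the cylinder, together with the density of $\scrS$, does the job, but it must be stated cleanly. Beyond these, the argument is a routine assembly of Weis' theorem, Corollary~\ref{cor:4.5}, and the substitution $\lambda=\mu^2$.
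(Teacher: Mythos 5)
Your overall strategy (represent the resolvent on the cross-section via Theorem~\ref{thm:grubb}, lift to the cylinder with $\op_\tau$, apply Corollary~\ref{cor:4.5} with the substitution $\lambda=\mu^2$, and conclude with the Theorem of Weis) is exactly the paper's, but there is a genuine gap in the step you yourself flag as delicate: the behaviour for small $|\lambda|$. Parameter-ellipticity only yields invertibility of $\mu^2+\widetilde A(\tau)+G(\tau)$ for $|(\tau,\mu)|$ large; it gives no control whatsoever on the spectrum of $\mathbf A$ near the origin, and your claim that ``the large-$\mu$ resolvent bound already forces the spectrum of $\mathbf A$ to lie outside a full sector around $\rz_+$'' is false. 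A counterexample is $\widetilde A(\tau)=\tau^2-\Delta_M-c$ with Dirichlet conditions and $c$ larger than the first Dirichlet eigenvalue of $-\Delta_M$: this is still parameter-elliptic (the constant does not affect the principal symbols), yet $\mathbf A$ has spectrum meeting $(-\infty,0)$, so $\lambda+\mathbf A$ fails to be invertible for some $\lambda>0$ and the family $\{\lambda(\lambda+\mathbf A)^{-1}\}$ on a sector simply does not exist. The paper's proof avoids this entirely: it fixes $\lambda_0>0$ so large that $\mu^2+\lambda_0+A_M(\tau)$ is invertible for \emph{all} $(\tau,\mu)\in\rz\times\Sigma$ (which also makes $\mu^2(\mu^2+\lambda_0+A_M(\tau))^{-1}$ a genuine element of $B^{0,0,\nu}(M;\rz\times\Sigma)$ on the full parameter space, as Corollary~\ref{cor:4.5} requires), deduces maximal $L_q$-regularity for $\mathbf A+\lambda_0$ from Weis' theorem, and only then uses the finiteness of $(0,T)$ to remove the shift. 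You state the finiteness of $T$ at the outset but never use it for this purpose; it is precisely the ingredient that replaces your (untenable) sectoriality claim.

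A secondary slip: ``norm-bounded --- hence $\scrR$-bounded'' is not a valid implication outside Hilbert spaces (the paper states this explicitly in Section~\ref{sec:rbounded}). If you did want to handle a compact exceptional region of $\lambda$'s inside the resolvent set, you would need to invoke holomorphy of $\lambda\mapsto(\lambda+\mathbf A)^{-1}$ together with the standard fact that holomorphic images of compact sets are $\scrR$-bounded, not mere norm-boundedness. With the paper's $\lambda_0$-shift this issue never arises.
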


\begin{proof}
Define the operator $A_M(\tau)$ for $\tau\in\rz$ by $\scrD(A_M(\tau)) := \{ v\in W_p^2(M): \gamma_jv=0\}$
and $A_M(\tau)v := \widetilde A(\tau) + G(\tau)$. By Theorem~$\mathrm{\ref{thm:grubb}}$, the resolvent
$(\mu^2+A_M(\tau))^{-1}$ exists for sufficiently large $\mu\in\Sigma$ and is given by
$P_j(\tau,\mu)\in B^{-2,0,\nu}(M;\rz\times\Sigma)$. Choosing $\lambda_0>0$ sufficiently large, we obtain
\[ \mu^2 \big(\mu^2+\lambda_0+A_M(\tau)\big)^{-1}\in B^{0,0,\nu}(M;\rz\times \Sigma).\]
Setting $\lambda=\mu^2$, Corollary~\ref{cor:4.5} yields
\[ \lambda\big(\lambda+\lambda_0+\mathbf A)^{-1} = \op_\tau\big[ \mu^2(\mu^2
+\lambda_0+A_M(\tau))^{-1}\big] \in S_{\scrR}^0(\Sigma;L_p(Z),L_p(Z)).\]
By the Theorem of Weis~\cite{weis01}, $\mathbf A+\lambda_0$ has maximal $L_q$-regularity for all $1<q<\infty$.
As the time interval $(0,T)$ is assumed to be finite, this gives maximal $L_q$-regularity for $\mathbf A$.
\qed
\end{proof}

As indicated at the end of Section \ref{sec:rbounded_boutet}, the analog results hold for
$\tau$-independent operators, i.e., for parameter-elliptic boundary value problems of the form
  \begin{align*}
    (\lambda+A+G) u & = f \quad\text{in }M,\\
    \gamma_ju&=0 \quad\text{on }\partial M,
  \end{align*}
  where $A$ and $G$ are $($parameter-independent$)$ pseudodifferential and Green operators, respectively.

\subsection{The reduced Stokes problem}\label{sec:5.2}

Let $\Sigma := \{\mu\in \cz\setminus\{0\}: |\arg\mu|\le \theta\}\cup\{0\}$ with $\frac\pi4<\theta<\frac\pi2$.
For $\mu\in\Sigma$, we consider in the waveguide $Z:=\rz\times M$ the resolvent problem
\begin{align}\label{eq:eqn1}
\begin{split}
  \mu^2 u -\Delta u+\nabla p&=f \qquad \text{in }Z,  \\
  \mathrm{div}\,u&=0 \qquad  \text{in }Z,  \\
  \gamma_0u&=0 \qquad  \text{on }\partial Z,
\end{split}
\end{align}
where $\gamma_0$ denotes the operator of restriction to the boundary.
{
We write the Laplacian $\Delta$ on $Z$ and the inner normal $\nu$ of $Z$ as $\Delta=\partial_{r}^2+\Delta_M$
and $\nu=(0,\nu_M)$, respectively, where $r$ denotes the variable of $\rz$ and the subscript $M$ indicates the
corresponding objects on $M$. We define the boundary operators $\gamma_\nu$ and $\gamma_1$ on $Z$ by
$\gamma_\nu u=\nu\cdot\gamma_0 u$ and $\gamma_1 p=\gamma_\nu(\nabla p)$, respectively.
Moreover,  let us write $u=(u_1,\ulu)$ with $u_1:Z\to\rz$ and $\ulu:Z\to\rz^n$ and analogously $f=(f_1,\ulf)$.

Due to the divergence condition, in \eqref{eq:eqn1} we may replace the Laplacian $\Delta$ by
$A=\Delta-\nabla \mathrm{div}$ without changing the problem $($a `trick' going back to Grubb, Solonnikov
\cite{grubb-solonnikov91}), eliminating the second order derivatives in the direction normal to the boundary$)$.
Doing so, we obtain from \eqref{eq:eqn1} that
}
\begin{equation}\label{eq:eqn2}
\begin{aligned}
 \Delta p& =0&& \text{in }Z,\\
 \gamma_1 p& =\gamma_\nu Au+\gamma_\nu f&&\text{on }\partial Z,
\end{aligned}
\end{equation}
{for any $f$ with $\mathrm{div}\, f=0$}.
If $\mathbf{K}$ denotes the operator satisfying $\Delta\mathbf{K}=0$ and $\gamma_1\mathbf{K}=\mathrm{id}$,
the first equation in \eqref{eq:eqn1} becomes
\begin{equation}\label{eq:eqn1a}
 \mu^2 u -\Delta u+\nabla\mathbf{K}(\gamma_\nu Au+\gamma_\nu f)=f.
\end{equation}
$\mathbf{K}$ is the Fourier multiplier with symbol $K(\tau)$ satisfying
$(\Delta_M-\tau^2)K(\tau)=0$ and $\gamma_{1,M}K(\tau)=\mathrm{id}$ $($denoting the co-variable to $r$ by $\tau)$ .
{$K(\tau)$ does not belong to Boutet de Monvel's calculus, but we can say the following}:
\begin{lemma}\label{lem:abc}
There exists a $($strongly$)$ parameter-dependent Poisson operator $K_0(\tau)\in P_\cl^{-1}(M;\rz)$ and
an $\eps=\eps(p)>0$ such that
  $$\nabla (\mathbf{K}-\mathbf{K}_0)\gamma_\nu A:H^{2-\eps}_p(Z)^n\cap \mathrm{ker}\,\gamma_0 \lra L_p(Z)$$
continuously, where $\mathbf{K}$ and $\mathbf{K}_0$ denote the Fourier multipliers with symbol $K(\tau)$
and $K_0(\tau)$, respectively.
\end{lemma}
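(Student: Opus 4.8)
The plan is to analyze the difference operator $K(\tau)-K_0(\tau)$ directly in terms of its symbol and show that, after composing with $\gamma_\nu A$ and applying $\nabla$, it gains enough smoothing to map $H^{2-\eps}_p(Z)^n$ into $L_p(Z)$. First I would recall that $K(\tau)$ solves $(\Delta_M-\tau^2)K(\tau)=0$, $\gamma_{1,M}K(\tau)=\mathrm{id}$; in a collar neighborhood of $\partial M$, using geodesic normal coordinates $x_n\ge 0$, this is (up to lower order terms) the half-space problem $(\partial_{x_n}^2 - |\xi'|^2 - \tau^2)\wt{K}=0$ with $\partial_{x_n}\wt{K}|_{x_n=0}=\mathrm{id}$, whose solution is $\wt{K}(\xi',\tau;x_n) = -\spk{\xi',\tau}_0^{-1}e^{-\spk{\xi',\tau}_0 x_n}$ with $\spk{\xi',\tau}_0=(|\xi'|^2+\tau^2)^{1/2}$. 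This is a homogeneous symbol kernel of order $-1$ in $(\xi',\tau)$, i.e. the principal part of a strongly parameter-dependent Poisson operator of order $-1$; I would take $K_0(\tau)\in P_\cl^{-1}(M;\rz)$ to be a parametrix-type operator whose principal boundary symbol coincides with this, constructed by the standard iterative procedure of Boutet de Monvel's calculus on $M$ via local coordinates and a partition of unity as in Section~\ref{sec:5.1}.

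Next I would show that $R(\tau):=K(\tau)-K_0(\tau)$, while not itself in the calculus (because $K(\tau)$ is not), nonetheless satisfies good mapping estimates: the point of subtracting the calculus Poisson parametrix $K_0$ is precisely that the error solves $(\Delta_M-\tau^2)R(\tau) = -(\Delta_M-\tau^2)K_0(\tau)=:S(\tau)$ with $\gamma_{1,M}R(\tau)=0$, where $S(\tau)$ is \emph{one order smoother} than a zeroth order parameter-dependent $\psi$DO acting on functions on $\partial M$ — more precisely $S(\tau)\in S^{-1}$ as a family of operators $B^{s-1/p}_{pp}(\partial M)\to H^s_p(M)$ with the appropriate decay in $\spk\tau$. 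Inverting the elliptic operator $\Delta_M-\tau^2$ on $M$ (with, say, Dirichlet conditions, absorbing the $\gamma_{1,M}R=0$ condition into a correction which is again smoothing in $\tau$), one gains two orders of regularity, so $R(\tau)\colon B^{s-1/p}_{pp}(\partial M)\to H^{s+1}_p(M)$ with norm $O(\spk\tau^{-1})$, i.e. $R(\tau)$ is, for mapping purposes, like a Poisson operator of order $-2$ rather than $-1$. Consequently $\gamma_\nu A$ maps $H^{2-\eps}_p(M)^n$ (with $\gamma_0=0$) into $B^{1-\eps-1/p}_{pp}(\partial M)$ — here one uses that $A=\Delta-\nabla\mathrm{div}$ has no second normal derivatives, so on $\ker\gamma_0$ it only loses $2-\eps$ orders minus the gain from the trace, landing in $B^{1-\eps-1/p}_{pp}$ — and then $\nabla R(\tau)\gamma_\nu A$ maps this into $H^{1-\eps}_p(M)$, uniformly in $\tau\in\rz$ (in fact with summable decay), hence the Fourier multiplier $\nabla(\mathbf K - \mathbf K_0)\gamma_\nu A$ maps $H^{2-\eps}_p(Z)^n\cap\ker\gamma_0$ into $H^{1-\eps}_p(Z)\hookrightarrow L_p(Z)$ by Corollary~\ref{cor:S-scrR} (or the Mikhlin/Girardi--Weis theorem, Theorem~\ref{thm:girardi}), for any $\eps<1$.

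The main obstacle I expect is bookkeeping the regularity orders precisely enough to extract a genuine positive $\eps=\eps(p)$: one must track how many derivatives $\gamma_\nu A$ costs on $\ker\gamma_0$, how much the trace $\gamma_\nu$ gives back, and how much $R(\tau)$ (as opposed to $K(\tau)$ alone) gains, and verify that the net balance after applying $\nabla$ is strictly positive so that the target embeds into $L_p(Z)$. A secondary technical point is the passage from $M$ to $Z=\rz\times M$: one needs the estimates on $M$ to be uniform in the covariable $\tau$ with enough decay (built into the parameter-dependent calculus and the explicit exponential $e^{-\spk{\xi',\tau}_0 x_n}$) to invoke the Fourier-multiplier theorem; this is routine given the parameter-dependent symbol estimates but must be stated. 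Modulo these order counts, the argument is the standard "subtract the calculus parametrix, the remainder is smoothing" device.
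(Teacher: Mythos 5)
Your identification of the model symbol kernel $-|(\xi^\prime,\tau)|^{-1}e^{-|(\xi^\prime,\tau)|x_n}$ and the general ``subtract a calculus approximation'' strategy match the paper, but the central claim of your argument --- that $R(\tau)=K(\tau)-K_0(\tau)$ gains one order \emph{uniformly in $\tau\in\rz$}, with norm $O(\spk{\tau}^{-1})$ --- is false, and it fails exactly where the lemma is nontrivial, namely at low frequencies. The boundary condition $\gamma_{1,M}$ is of Neumann type, and the problem $(\Delta_M-\tau^2)v=0$, $\gamma_{1,M}v=g$ degenerates at $\tau=0$: Green's formula gives $\tau^2\int_M v=\pm\int_{\partial M}g$, so $K(\tau)$ blows up like $\tau^{-2}$ on data with nonvanishing boundary mean (consistently, the model kernel has $L_1(\rz_{+,x_n})$-norm $|(\xi^\prime,\tau)|^{-2}$). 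Any $K_0(\tau)\in P^{-1}_\cl(M;\rz)$ is bounded near $\tau=0$, since symbols of the calculus are smooth there; hence $R(\tau)$ inherits the blow-up and is not even uniformly bounded. Your device for handling the Neumann condition --- inverting $\Delta_M-\tau^2$ with Dirichlet conditions and ``absorbing $\gamma_{1,M}R=0$ into a smoothing correction'' --- is circular: the correction that restores the $\gamma_{1,M}$-condition is $K(\tau)$ applied to the boundary error, which reintroduces the very singular operator you are estimating. You locate the main obstacle in Sobolev-order bookkeeping; the actual obstacle is the singularity of $K(\tau)$ at $(\xi^\prime,\tau)=0$, which your parametrix construction (necessarily involving a frequency excision near the origin) silently ignores.

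The paper's proof is built around precisely this point: $K_0$ is obtained from $K$ by replacing $|(\xi^\prime,\tau)|$ with a smooth function $[(\xi^\prime,\tau)]$ agreeing with it outside a ball, so that in the model case $K-K_0$ vanishes for large frequencies and the whole problem is concentrated at the singular point. The boundedness of $\nabla(\mathbf{K}-\mathbf{K}_0)\gamma_\nu A$ on $\ker\gamma_0$ then cannot be obtained by estimating $K-K_0$ alone; it rests on a cancellation between the $|(\xi^\prime,\tau)|^{-1}$-type singularity and the symbol of $\gamma_\nu A$, which vanishes appropriately at the frequency origin (heuristically, $\mathrm{div}\,A=0$ forces $\int_{\partial M}\gamma_{\nu_M}A_M\underline{u}=0$, which is exactly the compatibility condition of the degenerate Neumann problem). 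This low-frequency analysis is the content of Lemma 4.2 of Abels to which the paper defers, and it is also the source of the loss $\eps=\eps(p)>0$. A correct proof must therefore analyze the composite operator at low frequencies; an argument that first bounds $K-K_0$ uniformly and only afterwards composes with $\gamma_\nu A$ cannot succeed.
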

In fact, {using} a partition of unity and local coordinates, the proof of this lemma can be reduced to the model
case of
$M=\rz^n_+$  being the half space. In this case, the symbol kernels of $K(\tau)$ and $K_0(\tau)$ are
 $$k(\xi^\prime,\tau;x_n)=-\frac{1}{|(\xi^\prime,\tau)|}e^{-|(\xi^\prime,\tau)|x_n},\qquad
     k_0(\xi^\prime,\tau;x_n)=-\frac{1}{[(\xi^\prime,\tau)]}e^{-[(\xi^\prime,\tau)]x_n},$$
respectively, where $[\,\cdot\,]$ denotes a smooth function that coincides with the usual
modulus $|\cdot|$ {outside some ball}. We shall not go into further details here, but refer the reader to {Abels} \cite{abels05a}
for an analogous construction, in particular to Lemma 4.2 of that paper.

Now, instead of \eqref{eq:eqn1a}, we first consider the problem obtained by replacing $\mathbf{K}$ by $\mathbf{K}_0$, i.e.
\begin{align}\label{eq:eqn1b}
\begin{split}
  \mu^2 u -\Delta u+\nabla\mathbf{K}_0\gamma_\nu Au&=f^\prime \quad \ \ \text{in }Z,  \\
  \gamma_0u&=0 \qquad \text{on }\partial Z,
\end{split}
\end{align}
where $f^\prime=f-\nabla\mathbf{K}_0\gamma_\nu f$. The original problem \eqref{eq:eqn1} shall be treated below with
help of a suitable perturbation argument.

By direct calculation one sees that
 $$\gamma_\nu Au
     ={-\gamma_{1,M}\partial_r u_1}+\gamma_{\nu_M}A_M\ulu,\qquad A_M
     =\Delta_M-\nabla_M \mathrm{div}_M.$$
By writing $\nabla= (\partial_r,\nabla_M)$ and passing to the Fourier transform in $r$, we derive from
\eqref{eq:eqn1b} that
\begin{align}\label{eq:eqn1c}
\begin{split}
 (\mu^2&+\tau^2-\Delta_M)(U_1,\ulU)+\\
 &+(i\tau,\nabla_M)K_0(\tau)
     \big({-\gamma_{1,M}i\tau U_1}+\gamma_{\nu_M}A_M\ulU\big)
     =(F^\prime_1,\ulF^\prime)
\end{split}
\end{align}
with boundary conditions $\gamma_{0,M}U_1=0$ and $\gamma_{0,M}\ulU=0$.
{Here, capital letters indicate the Fourier transform of the respective function in the first variable}.
In particular, the equation {for the first component} can be written as
 $$C(\tau,\mu)U_1=B(\tau)\ulU+F^\prime_1,\qquad \gamma_{0,M}U_1=0,$$
where
\begin{align*}
 B(\tau)&={-i\tau K_0(\tau)\gamma_{\nu_M}A_M,}\\
 C(\tau,\mu)&=\mu^2+\tau^2-\Delta_M{+\tau^2K_0(\tau)\gamma_{1,M}.}
\end{align*}
Now, $B(\tau)$ is an operator of Boutet's calculus of order and type 2 with strong parameter-dependence on $\tau$,
while $C(\tau,\mu)$ has order and type 2 as well and is weakly parameter-dependent with regularity $\nu=1/2$; for
the latter see (2.3.55) in Proposition 2.3.14 of \cite{grubb86}.
By parameter-ellipticity and Theorem \ref{thm:grubb}
we can find
 $$\begin{pmatrix}C(\tau,\mu)\\ \gamma_{0,M}\end{pmatrix}^{-1}
     =:\begin{pmatrix}D(\tau,\mu) & \wt{K}(\tau,\mu)\end{pmatrix}$$
with $D$ {being} of order $-2$, $\wt{K}$ of order 0, and both having type 0 and regularity $\nu=1/2$. Therefore
\begin{equation}\label{eqn:milestone0}
 U_1=E(\tau,\mu)\ulU+D(\tau,\mu)F^\prime_1,\qquad E(\tau,\mu):=D(\tau,\mu)B(\tau);
\end{equation}
note that $E(\tau,\mu)$ is weakly parameter-dependent of zero order, type 2, and regularity $\nu=1/2$.
Inserting this in the equation for the second component in \eqref{eq:eqn1b}, we find the equation
\begin{equation}\label{eqn:milestone}
 (\mu^2+\tau^2-\Delta_M)\ulU+G(\tau,\mu)\ulU=\ulF^\prime-D(\tau,\mu)F^\prime_1
\end{equation}
for $\ulU$, where
 $$G(\tau,\mu)=\nabla_M K_0(\tau)\gamma_{\nu,M}A_M{-i\tau\nabla_MK_0(\tau)\gamma_{1,M}E(\tau,\mu)}$$
is a weakly parameter-dependent singular Green operator of order and type 2, with regularity $\nu=1/2$. Using the
parameter-ellipticity of $(\mu^2+\tau^2-\Delta_M+G(\tau,\mu),\,\gamma_{0,M})$ we can resolve
\eqref{eqn:milestone} for $\ulU$ and substitute $\ulU$ in \eqref{eqn:milestone0}, resulting in
  $$U(\tau,\cdot)=S(\tau,\mu)F^\prime(\tau,\cdot),\qquad |(\tau,\mu)|\ge R,$$
for some sufficiently large $R\ge0$ and with $S(\tau,\mu)$ being an $(n+1)\times(n+1)$-matrix with components
belonging to $B^{-2,0,1/2}(M;\rz\times\Sigma)$. Passing to the inverse Fourier transform
with respect to $\tau$, we see that $u=S(\mu)f^\prime$ is the unique solution of \eqref{eq:eqn1b} for $\mu\in\Sigma$
sufficiently large, where the solution operator $R(\mu)$ is defined by
$R(\mu) := \scrF^{-1}_{\tau\to r}S(\tau,\mu)\scrF_{r\to\tau}$.

Due to Theorem~\ref{thm:main1} we have, for sufficiently large $\mu_0>0$,
\begin{equation}
  \label{eq:5-14}
  \scrR\Big(\big\{ \spk\mu^{2+|\alpha|} \partial_\mu^\alpha R(\mu): \mu\in\Sigma, |\mu|\ge \mu_0\big\}\Big) <\infty.
\end{equation}
Therefore, for sufficiently large $\mu\in\Sigma$, the problem~\eqref{eq:eqn1b} is uniquely solvable,
and \eqref{eq:5-14} gives a resolvent estimate even in the $\scrR$-bounded version.
{This is the main step for proving maximal regularity for the Stokes operator:}

\begin{theorem}\label{thm:5.5}
Let $M\subset\rz^n$ be a bounded smooth domain, and $Z:=\rz\times M$.
{Let $1<p<\infty$ and $T>0$ be finite}.
Let $P_p$ be the Helmholtz projection in $L_p(Z)$ $($see~{Farwig} \emph{\cite{farwig03}}$)$.
Define the Stokes operator
$\mathbf A$ by
  \begin{align*}
    \scrD(\mathbf A) & := W_p^2(Z)\cap W_{p,0}^1(Z)\cap L_{p,\sigma}(Z),\\
    \mathbf A u & := -P_p\Delta u\quad (u\in\scrD(\mathbf A)).
  \end{align*}
  Here, $L_{p,\sigma}(Z)$ stands for the standard space of solenoidal $L_p$-vector fields.
Then $\mathbf A$ has maximal $L_q$-regularity for every {$1<q<\infty$} in the time interval $(0,T)$.
\end{theorem}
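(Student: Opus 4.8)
The plan is to reduce the Stokes resolvent problem \eqref{eq:eqn1} to the reduced equation \eqref{eq:eqn1a}, to obtain the latter as a smoothing perturbation of the already solved problem \eqref{eq:eqn1b} by means of Lemma~\ref{lem:abc}, and then to invoke the theorem of Weis \cite{weis01} exactly as in the proof of Corollary~\ref{cor:5.3}. For the reduction one uses, as in Grubb--Solonnikov \cite{grubb-solonnikov91} (see also Abels \cite{abels05a}) and recalling that the Helmholtz projection $P_p$ is bounded on $L_p(Z)$ \cite{farwig03}, that for $f\in L_{p,\sigma}(Z)$ --- so that $\gamma_\nu f=0$ --- the resolvent $(\lambda+\mathbf A)^{-1}f$, with $\lambda=\mu^2$, is the solution $u$ of $\lambda u-\Delta u+\nabla\mathbf K\gamma_\nu Au=f$ in $Z$, $\gamma_0u=0$ on $\partial Z$. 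Indeed, $\mathrm{div}\,u$ then solves $(\lambda-\Delta)\mathrm{div}\,u=0$ with vanishing Neumann data (using $\Delta\mathbf K=0$ and $\gamma_1\mathbf K=\mathrm{id}$), hence vanishes for $|\mu|$ large, so that $u\in L_{p,\sigma}(Z)$ and, together with $p:=\mathbf K\gamma_\nu Au$, solves \eqref{eq:eqn1}; uniqueness follows from the resolvent bound obtained below. It thus suffices to construct, for the reduced problem, an $\scrR$-bounded resolvent with parameter weight $\spk{\mu}^{2}$.

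Writing $\mathbf K=\mathbf K_0+(\mathbf K-\mathbf K_0)$, letting $\mathcal L_0(\mu)$ denote the operator of \eqref{eq:eqn1b} and $P:=\nabla(\mathbf K-\mathbf K_0)\gamma_\nu A$, the reduced problem reads $(\mathcal L_0(\mu)+P)u=f$ on $\ker\gamma_0$. We know that $\mathcal L_0(\mu)^{-1}=R(\mu)=\scrF^{-1}_{\tau\to r}S(\tau,\mu)\scrF_{r\to\tau}$ for $|\mu|$ large, with $S(\tau,\mu)$ a matrix of operators in $B^{-2,0,1/2}(M;\rz\times\Sigma)$ satisfying the $\scrR$-bound \eqref{eq:5-14}, and by Lemma~\ref{lem:abc} that $P$ maps $H^{2-\eps}_p(Z)^n\cap\ker\gamma_0$ boundedly into $L_p(Z)$ for some $\eps=\eps(p)>0$. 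The key point is that $R(\mu)$ is also $\scrR$-bounded as a family $L_p(Z)\to W^2_p(Z)$: since $(1-\Delta)R(\mu)=\op_\tau\big[(1+\tau^2-\Delta_M)S(\tau,\mu)\big]$ and, by Theorem~\ref{thm:algebra}, $(1+\tau^2-\Delta_M)S(\tau,\mu)\in B^{0,0,1/2}(M;\rz\times\Sigma)$, this follows from Corollary~\ref{cor:4.5}. Interpolating this with \eqref{eq:5-14} --- by Theorem~\ref{thm:inter}, using Kahane's contraction principle to match the parameter weights and the embedding $(L_p(Z),W^2_p(Z))_{\theta,p}\hookrightarrow H^{2-\eps}_p(Z)$ for $\theta$ close to $1$ --- one obtains that $R(\mu)$ is $\scrR$-bounded as a family $L_p(Z)\to H^{2-\eps}_p(Z)$ with a gain $\spk{\mu}^{-\eps}$ in the parameter relative to its $W^2_p$-behaviour. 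Since $P$ does not depend on $\mu$, the family $\{\spk{\mu}^{|\alpha|}D^\alpha_\mu(R(\mu)P)\}$ on $H^{2-\eps}_p(Z)^n\cap\ker\gamma_0$ then has $\scrR$-bound $O(\spk{\mu}^{-\eps})$, so a Neumann series converges in the $\scrR$-bounded sense for $|\mu|\ge\mu_1$ with $\mu_1$ sufficiently large and yields the solution operator $\wt R(\mu)=R(\mu)\big(I+PR(\mu)\big)^{-1}$ of the reduced problem. Differentiating the series by the Leibniz rule and using the composition rules for $\scrR$-bounds together with \eqref{eq:5-14} term by term, one gets that $\{\spk{\mu}^{2+|\alpha|}D^\alpha_\mu\wt R(\mu):\mu\in\Sigma,\ |\mu|\ge\mu_1\}$ is an $\scrR$-bounded subset of $\scrL(L_p(Z))$.

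With $\lambda=\mu^2$ the condition $|\arg\mu|\le\theta$ becomes $|\arg\lambda|\le2\theta$, and $\theta>\frac\pi4$ forces $2\theta>\frac\pi2$. By the previous step and Kahane's contraction principle, $\{\lambda(\lambda+\mathbf A)^{-1}:|\arg\lambda|\le2\theta,\ |\lambda|\ge\mu_1^2\}$ is $\scrR$-bounded in $\scrL(L_p(Z))$; adding a large constant $\lambda_0>0$ to absorb the excluded bounded set, $\mathbf A+\lambda_0$ is invertible and $\{\lambda(\lambda+\lambda_0+\mathbf A)^{-1}:|\arg\lambda|\le2\theta\}$ is $\scrR$-bounded. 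As $2\theta>\frac\pi2$, this is exactly the hypothesis of the theorem of Weis \cite{weis01}, which yields maximal $L_q$-regularity of $\mathbf A+\lambda_0$ on $(0,\infty)$ for every $1<q<\infty$; since $T<\infty$, $\mathbf A$ has maximal $L_q$-regularity on $(0,T)$, precisely as at the end of the proof of Corollary~\ref{cor:5.3}.

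I expect the main obstacle to be the perturbation step: upgrading the mere norm smallness of $PR(\mu)$ furnished by Lemma~\ref{lem:abc} to $\scrR$-bounded smallness of the full family of parameter derivatives $\{\spk{\mu}^{|\alpha|}D^\alpha_\mu(PR(\mu))\}$, which requires the refined $\scrR$-bounded mapping property of $R(\mu)$ into $H^{2-\eps}_p(Z)$ (obtained from the composition and interpolation results of the calculus) and a careful bookkeeping of parameter weights through the Neumann series. A secondary, more routine difficulty is the rigorous identification of $(\lambda+\mathbf A)^{-1}$ with the solution operator of \eqref{eq:eqn1a}: the solenoidality of the constructed velocity field, the recovery of the pressure, and the uniqueness assertion.
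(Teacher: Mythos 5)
Your overall architecture coincides with the paper's: reduce via the Helmholtz decomposition to the reduced Stokes problem \eqref{eq:eqn1a}, regard it as a lower-order perturbation (through Lemma~\ref{lem:abc}) of the already-solved problem \eqref{eq:eqn1b}, whose resolvent $R(\mu)$ satisfies \eqref{eq:5-14}, and conclude with the theorem of Weis and the finiteness of $(0,T)$. The identification of $(\lambda+\mathbf A)^{-1}$ with the solution operator of the reduced problem and the final passage $\mu\mapsto\lambda=\mu^2$ are as in the paper.

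The perturbation step, however, contains a genuine gap. You claim that interpolating the $\scrR$-bound of $\{\spk{\mu}^{2}R(\mu)\}$ in $\scrL(L_p(Z))$ against the $\scrR$-bound of $\{R(\mu)\}$ in $\scrL(L_p(Z),W^2_p(Z))$ yields an $\scrR$-bound for $\{\spk{\mu}^{\delta}R(\mu)\}$ in $\scrL(L_p(Z),H^{2-\eps}_p(Z))$ for some $\delta>0$, i.e.\ a decay $\spk{\mu}^{-\delta}$ of the $\scrR$-bound of $PR(\mu)$. Theorem~\ref{thm:inter} does not give this: it interpolates one and the same set $T$ that must be $\scrR$-bounded in both endpoint couples, whereas the weighted family $\{\spk{\mu}^{\delta}R(\mu)\}$ is (as far as your argument shows) not $\scrR$-bounded in $\scrL(L_p(Z),W^2_p(Z))$ for any $\delta>0$ --- the symbol $(1+\tau^2-\Delta_M)S(\tau,\mu)$ has order exactly $0$, so no positive power of $\spk{\mu}$ can be absorbed there. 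Kahane's contraction principle only lets you decrease weights, not increase them, and a dyadic decomposition of $\Sigma$ does not repair this, since a countable union of $\scrR$-bounded sets with uniformly bounded $\scrR$-bounds need not be $\scrR$-bounded. Without the decay, your Neumann series for $(I+PR(\mu))^{-1}$ has no reason to converge. The remedy (and the paper's route) is to extract the smallness at the level of norms rather than of parameter weights: by Lemma~\ref{lem:abc} and the interpolation inequality, $\|\mathbf Bu\|_{L_p}\le\delta\|\mathbf A_0^{(r)}u\|_{L_p}+C_\delta\|u\|_{L_p}$ for every $\delta>0$, and then the perturbation theorem for $\scrR$-sectorial operators (Proposition~4.2 of Denk, Hieber, Pr\"uss \cite{denk-hieber-pruess03}) --- whose proof is precisely your Neumann series, with convergence guaranteed by choosing $\delta$ small and the shift $\lambda_0$ large --- yields the $\scrR$-boundedness of $\{\lambda(\lambda+\lambda_1+\mathbf A^{(r)})^{-1}\}$ on the closed right half-plane. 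With that substitution your proof closes; note also that tracking the derivatives $D^\alpha_\mu$ of the resolvent through the series is unnecessary, since Weis's theorem only requires $\scrR$-sectoriality, i.e.\ the case $\alpha=0$.
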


\begin{proof}
  Due to {the existence of} the Helmholtz decomposition of $L_p(Z)$ (see {Farwig} \cite{farwig03}), we see that for $f\in L_{p,\sigma}(Z)$ the
solvability of ${(\lambda+\mathbf A)}u=f$ in $L_{p,\sigma}(Z)$ is equivalent to the solvability of \eqref{eq:eqn1} with
$\mu^2=\lambda$. { Instead of \eqref{eq:eqn1}, we can consider the reduced Stokes problem~\eqref{eq:eqn1a} with Dirichlet boundary conditions. Therefore, we have to show maximal regularity (in finite time intervals) for the reduced Stokes operator $\mathbf A^{(r)}\colon D(\mathbf A^{(r)})\subset L_p(Z)\to L_p(Z)$ which is defined by $\scrD(\mathbf A^{(r)}) := W_p^2(Z)\cap W_{p,0}^1(Z)$ and
\[ \mathbf A^{(r)} u := -\Delta u + \nabla \mathbf K\gamma_\nu A u, \quad u\in\scrD (\mathbf A^{(r)}).\]
{Substituting $\mathbf K$ by $\mathbf K_0$ yields the modified reduced Stokes operator $\mathbf A_0^{(r)}$,}
\[ \mathbf A_0^{(r)} u := -\Delta u + \nabla \mathbf K_0\gamma_\nu A u, \quad u\in\scrD (\mathbf A_0^{(r)}){:=\scrD (\mathbf A^{(r)})}.\]
We have seen that, for sufficiently large $\lambda=\mu^2$ with $\mu\in\Sigma$, the operator $\lambda+\mathbf A_0^{(r)}$ is invertible, and that its inverse is given by  $(\lambda+\mathbf A_0^{(r)})^{-1} = R(\mu)$. Due to the $\scrR$-boundedness result in \eqref{eq:5-14} and the condition $\theta>\frac\pi 4$, the operator family
\[ \big\{ \lambda (\lambda+\lambda_0+\mathbf A_0^{(r)})^{-1}: \operatorname{\textrm{Re}}\, \lambda \ge 0\big\} \subset \scrL(L_p(Z))\]
is $\scrR$-bounded for sufficiently large $\lambda_0>0$.

The reduced Stokes operator $\mathbf A^{(r)}$ can be seen as a small perturbation of $\mathbf A_0^{(r)}$. In fact, due to Lemma~\ref{lem:abc}, for the difference $\mathbf B := \mathbf A_0^{(r)} - \mathbf A^{(r)}$ we have $\scrD(\mathbf B)\supset \scrD(\mathbf A_0^{(r)})$, and for every $\delta>0$ there exists a $C_\delta>0$ such that
\[ \| \mathbf B u \|_{L_p(Z)} \le C \| u\|_{H_p^{2-\epsilon}(Z)} \le \delta \| \mathbf A^{(r)}_0 u\|_{L_p(Z)} + C_\delta \|u\|_{L_p(Z)},\quad u\in \scrD(\mathbf A_0^{(r)}).\]
Here we used the interpolation inequality. Now, the perturbation result in Denk, Hieber, Pr\"uss \cite{denk-hieber-pruess03}, Proposition~4.2, yields the $\scrR$-boundedness of
\[ \big\{ \lambda (\lambda+\lambda_1+\mathbf A^{(r)})^{-1}: \operatorname{\textrm{Re}}\, \lambda \ge 0\big\} \subset \scrL(L_p(Z))\]
for some sufficiently large $\lambda_1>0$. By the Theorem of Weis, $\mathbf A^{(r)}+\lambda_1$ has maximal $L_q$-regularity for all $1<q<\infty$. As the time interval is finite, this gives maximal $L_q$-regularity for the reduced Stokes operator which finishes the proof.}
\qed
\end{proof}

Let us finally remark that more general results on maximal regularity for the Stokes operator in cylindrical domains
have been obtained, e.g., by Farwig and Ri~\cite{farwig-ri08} under weaker smoothness assumptions on the
domain. For the existence of a bounded $H^\infty$-calculus $($which also implies the statement of
Theorem~$\mathrm{\ref{thm:5.5}})$, we also refer to Abels~\cite{abels05a}.
{The intention of the present section was not to recover or even improve these results, but to outline that
maximal regularity can also be obtained by employing} the $\scrR$-boundedness of operator-families
belonging to Boutet de Monvel's calculus.


\bibliographystyle{spmpsci}

\begin{thebibliography}{10}
\providecommand{\url}[1]{{#1}}
\providecommand{\urlprefix}{URL }
\expandafter\ifx\csname urlstyle\endcsname\relax
  \providecommand{\doi}[1]{DOI~\discretionary{}{}{}#1}\else
  \providecommand{\doi}{DOI~\discretionary{}{}{}\begingroup
  \urlstyle{rm}\Url}\fi

\bibitem{abels05a}
Abels, H.: Bounded imaginary powers and {$H_\infty$}-calculus of the {S}tokes
  operator in unbounded domains.
\newblock In: Nonlinear elliptic and parabolic problems, \emph{Progr. Nonlinear
  Differential Equations Appl.}, vol.~64, pp. 1--15. Birkh\"auser, Basel
  (2005).
\newblock \doi{10.1007/3-7643-7385-7 1}.
\newblock \urlprefix\url{http://dx.doi.org/10.1007/3-7643-7385-7 1}

\bibitem{amann05}
Amann, H.: Quasilinear parabolic problems via maximal regularity.
\newblock Adv. Differential Equations \textbf{10}(10), 1081--1110 (2005)

\bibitem{boutetdemonvel71}
Boutet~de Monvel, L.: Boundary problems for pseudo-differential operators.
\newblock Acta Math. \textbf{126}(1-2), 11--51 (1971)


\bibitem{coriasco-schrohe-seiler07}
Coriasco, S., Schrohe, E., Seiler, J.: Bounded {$H_\infty$}-calculus for
  differential operators on conic manifolds with boundary.
\newblock Comm. Partial Differential Equations \textbf{32}(1-3), 229--255
  (2007).
\newblock \doi{10.1080/03605300600910290}.
\newblock \urlprefix\url{http://dx.doi.org/10.1080/03605300600910290}

\bibitem{denk-hieber-pruess03}
Denk, R., Hieber, M., Pr{\"u}ss, J.: {$\mathscr R$}-boundedness, {F}ourier
  multipliers and problems of elliptic and parabolic type.
\newblock Mem. Amer. Math. Soc. \textbf{166}(788), viii+114 (2003).
\newblock \doi{10.1090/memo/0788}.
\newblock \urlprefix\url{http://dx.doi.org/10.1090/memo/0788}

\bibitem{denk-krainer07}
Denk, R., Krainer, T.: {$\mathscr R$}-boundedness, pseudodifferential
  operators, and maximal regularity for some classes of partial differential
  operators.
\newblock Manuscripta Math. \textbf{124}(3), 319--342 (2007).
\newblock \doi{10.1007/s00229-007-0131-1}.
\newblock \urlprefix\url{http://dx.doi.org/10.1007/s00229-007-0131-1}

\bibitem{duong90}
Duong, X.T.: {$H_\infty$} functional calculus of elliptic operators with
  {$C^\infty$} coefficients on {$L^p$} spaces of smooth domains.
\newblock J. Austral. Math. Soc. Ser. A \textbf{48}(1), 113--123 (1990)

\bibitem{escher-pruess-simonett03}
Escher, J., Pr{\"u}ss, J., Simonett, G.: Analytic solutions for a {S}tefan
  problem with {G}ibbs-{T}homson correction.
\newblock J. Reine Angew. Math. \textbf{563}, 1--52 (2003).
\newblock \doi{10.1515/crll.2003.082}.
\newblock \urlprefix\url{http://dx.doi.org/10.1515/crll.2003.082}

\bibitem{farwig03}
Farwig, R.: Weighted {$L^q$}-{H}elmholtz decompositions in infinite cylinders
  and in infinite layers.
\newblock Adv. Differential Equations \textbf{8}(3), 357--384 (2003)

\bibitem{farwig-ri08}
Farwig, R., Ri, M.-H.: Resolvent estimates and maximal regularity in
  weighted {$L^q$}-spaces of the {S}tokes operator in an infinite cylinder.
\newblock J. Math. Fluid Mech. \textbf{10}(3), 352--387 (2008).
\newblock \doi{10.1007/s00021-006-0235-5}.
\newblock \urlprefix\url{http://dx.doi.org/10.1007/s00021-006-0235-5}

\bibitem{girardi-weis03}
Girardi, M., Weis, L.: Criteria for {R}-boundedness of operator families.
\newblock In: Evolution equations, \emph{Lecture Notes in Pure and Appl.
  Math.}, vol. 234, pp. 203--221. Dekker, New York (2003)

\bibitem{grubb86}
Grubb, G.: Functional calculus of pseudodifferential boundary problems,
  \emph{Progress in Mathematics}, vol.~65.
\newblock Birkh\"auser Boston Inc., Boston, MA (1986)

\bibitem{grubb95}
Grubb, G.: Nonhomogeneous time-dependent {N}avier-{S}tokes problems in {$L_p$}
  {S}obolev spaces.
\newblock Differential Integral Equations \textbf{8}(5), 1013--1046 (1995)

\bibitem{grubb-kokholm93}
Grubb, G., Kokholm, N.J.: A global calculus of parameter-dependent
  pseudodifferential boundary problems in {$L_p$} {S}obolev spaces.
\newblock Acta Math. \textbf{171}(2), 165--229 (1993).
\newblock \doi{10.1007/BF02392532}.
\newblock \urlprefix\url{http://dx.doi.org/10.1007/BF02392532}

\bibitem{grubb-solonnikov91}
Grubb, G., Solonnikov, V.A.: Boundary value problems for the nonstationary
  {N}avier-{S}tokes equations treated by pseudo-differential methods.
\newblock Math. Scand. \textbf{69}(2), 217--290 (1991)

\bibitem{kalton-kunstmann-weis06}
Kalton, N., Kunstmann, P., Weis, L.: Perturbation and interpolation theorems
  for the {$H^\infty$}-calculus with applications to differential operators.
\newblock Math. Ann. \textbf{336}(4), 747--801 (2006).
\newblock \doi{10.1007/s00208-005-0742-3}.
\newblock \urlprefix\url{http://dx.doi.org/10.1007/s00208-005-0742-3}

\bibitem{kunstmann-weis04}
Kunstmann, P.C., Weis, L.: Maximal {$L_p$}-regularity for parabolic equations,
  {F}ourier multiplier theorems and {$H^\infty$}-functional calculus.
\newblock In: Functional analytic methods for evolution equations,
  \emph{Lecture Notes in Math.}, vol. 1855, pp. 65--311. Springer, Berlin
  (2004).
\newblock \doi{10.1007/978-3-540-44653-8 2}.
\newblock \urlprefix\url{http://dx.doi.org/10.1007/978-3-540-44653-8 2}

\bibitem{portal-strkalj06}
Portal, P., {\v{S}}trkalj, {\v{Z}}.: Pseudodifferential operators on {B}ochner
  spaces and an application.
\newblock Math. Z. \textbf{253}(4), 805--819 (2006).
\newblock \doi{10.1007/s00209-006-0934-x}.
\newblock \urlprefix\url{http://dx.doi.org/10.1007/s00209-006-0934-x}

\bibitem{schrohe01}
Schrohe, E.: A short introduction to {B}outet de {M}onvel's calculus.
\newblock In: Approaches to singular analysis ({B}erlin, 1999), \emph{Oper.
  Theory Adv. Appl.}, vol. 125, pp. 85--116. Birkh\"auser, Basel (2001)

\bibitem{schrohe-schulze94}
Schrohe, E., Schulze, B.W.: Boundary value problems in {B}outet de {M}onvel's
  algebra for manifolds with conical singularities. {I}.
\newblock In: Pseudo-differential calculus and mathematical physics,
  \emph{Math. Top.}, vol.~5, pp. 97--209. Akademie Verlag, Berlin (1994)

\bibitem{treves67}
Tr{\`e}ves, F.: Topological vector spaces, distributions and kernels.
\newblock Academic Press, New York (1967)

\bibitem{weis01}
Weis, L.: Operator-valued {F}ourier multiplier theorems and maximal
  {$L_p$}-regularity.
\newblock Math. Ann. \textbf{319}(4), 735--758 (2001).
\newblock \doi{10.1007/PL00004457}.
\newblock \urlprefix\url{http://dx.doi.org/10.1007/PL00004457}

\end{thebibliography}

\end{document}